%
%
%
\documentclass{amsproc}

\usepackage{amsmath,amssymb}
\newtheorem{theorem}{Theorem}[section]
\newtheorem{lemm}[theorem]{Lemma}
\newtheorem{prop}[theorem]{Proposition}

\theoremstyle{definition}
\newtheorem{defi}[theorem]{Definition}

\newtheorem{coro}[theorem]{Corollary}
\theoremstyle{remark}
\newtheorem{remark}[theorem]{Remark}

\numberwithin{equation}{section}


\def\vn{\varepsilon}

\def\ot{\otimes}
\def\b{\overline}

\def\om{\omega}

\def\ze{\zeta}

\def\a{\alpha}

\def\b{\beta}

\newfont{\df}{eufm10}

\def\ot{\otimes}

\def\ot{\otimes}



\begin{document}

\title[Restricted two-parameter quantum group of type $G_2$ ]
{Convex PBW-type Lyndon Basis and Restricted Two-parameter Quantum
Group of Type $G_2$ }

\author[Hu]{Naihong Hu$^\star$}
\address{Department of Mathematics, East China Normal University,
Min Hang Campus, Dong Chuan Road 500, Shanghai 200241, PR China}
\email{nhhu@math.ecnu.edu.cn}

\author[Wang]{Xiuling Wang$^*$}
\address{School of Mathematical Science, Nankai University,
Tianjin  300071, PR China}\email{xiulingwang@nankai.edu.cn}
\thanks{$^\star$
N. Hu, supported in part by the NNSF (Grants 10431040, 10728102),
the PCSIRT from the MOE of China, the National/Shanghai Priority
Academic Discipline Programmes}
\thanks{$^*$ X. Wang, Corresponding Author, supported
by the Nankai Research-encouraging Fund for the PhD-Teachers and a
fund from the LPMC}

\today{}
\subjclass{Primary 17B37, 81R50; Secondary 17B35}
\date{June 18, 2005}


\keywords{Restricted two-parameter quantum groups, Lyndon basis,
Drinfel'd double,
 integrals, ribbon Hopf algebra.}
\begin{abstract}
We construct finite-dimensional pointed Hopf algebras $\mathfrak
u_{r,s}(G_2)$ (i.e. restricted 2-parameter quantum groups) from the
2-parameter quantum group $U_{r,s}(G_2)$ defined
 in \cite{HS}, which turn out to be of
Drinfel'd doubles, where a crucial point is to give a detailed
combinatorial construction of the convex PBW-type Lyndon basis for
type $G_2$ in 2-parameter quantum version.  After furnishing
possible commutation relations among quantum root vectors, we show
that the restricted quantum groups are ribbon Hopf algebras under
certain conditions through determining their left and right
integrals. Besides these, we determine all of the Hopf algebra
isomorphisms of $\mathfrak u_{r,s}(G_2)$ in terms of the description
of the sets of its left (right) skew-primitive elements.
\end{abstract}

\maketitle
\section{Introduction}

Hopf algebra was first observed in algebraic topology by H. Hopf
early in 1941 and, as purely algebraic objects, has been developed
by many mathematicians and applied to other areas of mathematics
such as Lie theory, knot theory and combinatorics, etc. A
longstanding problem in the area is the full classification of the
finite-dimensional Hopf algebras. One of the very few general
classification results says that any cocommutative Hopf algebra over
the complex field $\mathbb C$ is a semidirect product of the
universal enveloping algebra of a Lie algebra and a group algebra,
which was known  as the Cartier-Kostant-Milnor-Moore theorem. Since
Kaplansky's ten conjectures \cite{K} proposed in 1975, they have
stimulated much research on Hopf algebras, and there have been a lot
of significant advances during the last two decades. A rich supply
of examples of noncommutative and noncocommutative Hopf algebras are
the Drinfel'd-Jimbo quantum groups $U_q(\mathfrak{g})$ (see
\cite{Dr}, $\mathfrak{g}$ is a semisimple Lie algebra) which were
found in the mid-eighties of the last century, and the
finite-dimensional small quantum groups $\mathfrak u_q(\mathfrak g)$
at $q=\epsilon$ a root of unity introduced by Lusztig \cite{L1}.
Until now the classification splits into two cases: the semisimple
case and the non-semisimple case. A good overview on classification
of semisimple Hopf algebras  is \cite{M1}. The classification of
non-semisimple Hopf algebras focuses on those of pointed Hopf
algebra over an algebraically closed field of characteristic 0
\cite{AS3,AS4}. Pointed Hopf algebras play an important role in
\cite{AS1,BDG,G}, where Kaplansky's 10th conjecture is refuted by
constructing infinitely many nonisomorphic Hopf algebras of a given
prime power dimension. Since finite-dimensional Hopf algebras are
far from being classified, it is meaningful to have various means of
constructing examples of finite-dimensional Hopf algebras (see for
instance \cite{AS1, AS2, BW3, H1, H2, HW1, HW2, L1, R, T}, etc.). To
our interest,  \cite{BW3, HW2} determined the structure of
restricted two-parameter quantum groups $\mathfrak u_{r,s}(\mathfrak
{sl}_n)$ and $\mathfrak u_{r,s}(\mathfrak {so}_{2n+1})$,
respectively, when both parameters $r, s$ are roots of unity, which
are new finite-dimensional pointed Hopf algebras and have new ribbon
elements under some conditions. Besides these, this will act as a
starting point for further studying the representation theory of the
two-parameter quantum groups at roots of unity as in the
one-parameter setting (cf. \cite{DK, L1} etc.). The goal of this
article is to solve the same problems for the type $G_2$ case.

\medskip

Two-parameter or multiparameter quantum groups since the work of
Drinfel'd \cite{Dr}  and Jimbo  \cite{Jim} were  focused on
quantized function algebras and quantum enveloping algebras mainly
for type $A$ cases in 1990's. In 2001, from the motivation of
down-up algebras approach \cite{Be}, Benkart-Witherspoon in
\cite{BW1} re-obtained Takeuchi's definition of two-parameter
quantum groups of $\mathfrak{gl}_n$ and $\mathfrak{sl}_n$. Since
then, a systematic study for the two-parameter quantum groups of
type $A$ has been developed by Benkart, Witherspoon, and their
cooperators Kang, Lee (see \cite{BKL1, BKL2, BW2, BW3}). In 2004,
Bergeron-Gao-Hu defined the two-parameter quantum groups
$U_{r,s}(\mathfrak g)$ (in the sense of Benkart-Witherspoon) for
$\mathfrak g=\mathfrak {so}_{2n+1}$, $\mathfrak {so}_{2n}$ and
$\mathfrak {sp}_{2n}$ in \cite{BGH1}, which are realized as
Drinfel'd doubles, and described weight modules in the category
$\mathcal O$ when $rs^{-1}$ is not a root of unity (see
\cite{BGH2}). Afterwards, Hu and his cooperators continued to
develop the corresponding theory for exceptional types $G, E$ and
the affine cases in \cite{HS, BH,HRZ,HZ}, etc.

\medskip

It should be mentioned that  one cannot write out conveniently the
convex PBW-type bases for the two-parameter quantum groups in terms
of Lusztig's braid group actions as the typical fashion in the
one-parameter case (see \cite{Ka, L2, L3}), which is witnessed by
Theorem 3.1 \cite{BGH1} in the study of Lusztig's symmetry. This is
one of difficulties encountered in the two-parameter setting, while
the combinatorial construction of the PBW-type bases in the quantum
setup is a rather nontrivial matter, which definitively depends not
only on the choice of a convex ordering on a positive root system
(\cite{B, R2}), but also on the adding manner of the $\bf
q$-bracketings (see \cite{R2, K1, K2, BH, HRZ}, etc.) on good Lyndon
words (cf. \cite{LR, R2}). Note that the construction of convex
PBW-type bases in two-parameter quantum cases has been given for
type $A$ in \cite{BKL1}, types $E$ in \cite{BH}, and for the low
rank cases of types $B$, $C$, $D$ in \cite{H3}, for type $B$ for
arbitrary rank in \cite{HW2}.  Motivated by \cite{H3, HW2, LR, R2},
the first result of the article is to present an explicit
construction for the convex PBW-type Lyndon bases of type $G_2$.
This is a prerequisite of the whole discussions later.

\medskip
In  this paper, we  construct a family of finite-dimensional pointed
Hopf algebra $\mathfrak{u}_{r,s}(G_2)$ of dimension $\ell^{16}$ as a
quotient of $U_{r,s}(G_2)$ by a Hopf ideal $\mathcal{I}$, which is
generated by certain homogeneous central elements of degree $\ell$,
where $r$ is a primitive $d$th root of unity, $s$ is a primitive
$d'$th root of unity and $\ell$ is the least common multiple of $d$
and $d'$. We will assume that the ground field $\mathbb{K}$ contains
a primitive $\ell $th root of unity, and show that the restricted
quantum groups $\mathfrak{u}_{r,s}(G_2)$ is a ribbon Hopf algebra.
Owing to the complexity of Lyndon bases in nonsimply-laced Dynkin
diagram cases, our treatments in type $G_2$ are complicated.

\medskip
The article is organized as follows. In Section 2, we recall the
definition of the two-parameter quantum groups of type $G_2$ from
\cite{HS}, and some basics about the structure. In particular, we
present a direct construction of the convex PBW-type Lyndon bases
for $U_{r,s}(G_2)$. In Section 3, we at first contribute more
efforts to make the possible commutation relations
 clearly, and then determine those central elements of degree $\ell$ in the case when
taking both parameters $r$ and $s$ to be roots of unity, which
generate a Hopf ideal. These enable us to further derive the
restricted two-parameter quantum groups $\mathfrak u_{r,s}(G_2)$ as
required. In Section 4, we show that these Hopf algebras are
pointed, and determine all of the Hopf algebra isomorphisms of
$\mathfrak u_{r,s}(G_2)$ in terms of the description of the set of
left (right) skew-primitive elements of it, while in the type $A$
case, \cite{BW3} missed some important left (right) skew-primitive
elements (see (3.6) \& (3.7) {\it loc cit}) which led to some
interesting families of isomorphisms undiscovered. In Section 5, we
show that these Hopf algebras are Drinfel'd doubles of a certain
(Borel-type) Hopf subalgebra $\mathfrak{b}$. In Section 6, we
determine the left and right integrals of
 $\mathfrak{b}$ and use them in combining with a result of Kauffmann-Radford
\cite{KR} to give a characterization of $\mathfrak{u}_{r,s}(G_2)$ to
be a ribbon Hopf algebra.

\section{ $U_{r,s}(G_2)$ and the convex PBW-type Lyndon basis} 
\medskip

We begin by recalling the definition of two-parameter quantum group
of type $G_2$, which was introduced by Hu-Shi \cite{HS}.

\subsection{Two-parameter Quantum Group $U_{r,s}(G_2)$}

Let ${\mathbb K}={\mathbb Q}(r,s)$ be a field of rational functions
with two indeterminates $r$, $s$ ($r^3\neq s^3, r^4\neq s^4$).  Let
$\Phi$ be a finite root system of $G_2$ with $\Pi$ a base of simple
roots, which is a subset of a Euclidean space $E = {\mathbb R}^3$
with an inner product $(\,,\,)$. Let $\epsilon _{1},\,\epsilon
_{2},\,\epsilon_{3}$ denote an orthonormal basis of $E$, then $\Pi =
\{\alpha_{1} = \epsilon_{1}-\epsilon_{2},\; \alpha_{2} =
\epsilon_{2}+\epsilon_{3}-2\epsilon_{1}\}$ and $\Phi = \pm
\{\alpha_1,\alpha_2,\alpha_2+\alpha_1,\alpha_2+2\alpha_1,\alpha_2+3\alpha_1,2\alpha_2+3\alpha_1\}$.
In this case, we set $\displaystyle r_1 =
r^{\frac{(\alpha_1,\,\alpha_1)}{2}} = r,\; r_2 =
r^{\frac{(\alpha_2,\,\alpha_2)}{2}} = r^3$ and $s_1 =
s^{\frac{(\alpha_1,\,\alpha_1)}{2}} = s,\; s_2 =
s^{\frac{(\alpha_2,\,\alpha_2)}{2}} = s^3$.

\begin{defi}
Let $U=U_{r, s}(G_2)$ be the associative algebra over ${\mathbb
Q}(r,s)$ generated by symbols $e_i,\;f_i,\;\omega_i^{\pm 1},\;
\omega_i'^{\pm 1} \;(i=1, 2)$ subject to the relations
$(G1)$---$(G6)$: \vskip0.2cm

\noindent $(G1)$ \ $[\,\omega_i^{\pm 1}, \omega_j^{\pm
1}\,]=[\,\omega_i^{\pm 1}, \omega_j'^{\pm 1}\,]=[\,\omega_i'^{\pm
1}, \omega_j'^{\pm 1}\,]=0, \quad \omega_i\omega_i^{-1} =1=
\omega_j'\omega_j'^{-1}$.

\smallskip

\noindent $(G2)$
\hspace*{\fill}$\omega_{1}\,e_{1}\,\omega_{1}^{-1} =
(rs^{-1})\,e_{1},\qquad\quad\ \;
\omega_{1}\,f_{1}\,\omega_{1}^{-1} =
(r^{-1}s)\,f_{1}$,\hspace*{\fill}

 \hspace*{\fill}$\omega_{1}\,e_{2}\,\omega_{1}^{-1} =
s^{3}\,e_{2},\qquad\qquad\quad \omega_{1}\,f_{2}\,\omega_{1}^{-1}
= s^{-3}\,f_{2}$,\hspace*{\fill}

\hspace*{\fill}$\omega_{2}\,e_{1}\,\omega_{2}^{-1} =
r^{-3}\,e_{1},\qquad\quad\ \;\, \omega_{2}\,f_{1}\,\omega_{2}^{-1}
= r^{3}\,f_{1}$,\hspace*{\fill}

\hspace*{\fill}$\omega_{2}\,e_{2}\,\omega_{2}^{-1} =
(r^{3}s^{-3})\,e_{2},\qquad\qquad\
\omega_{2}\,f_{2}\,\omega_{2}^{-1} =
(r^{-3}s^{3})\,f_{2}$.\hspace*{\fill}

\smallskip

\noindent $(G3)$
\hspace*{\fill}$\omega_{1}'\,e_{1}\,\omega_{1}'^{-1} =
(r^{-1}s)\,e_{1},\qquad\quad\ \
\omega_{1}'\,f_{1}\,\omega_{1}'^{-1} =
(rs^{-1})\,f_{1}$,\hspace*{\fill}

 \hspace*{\fill}$\omega_{1}'\,e_{2}\,\omega_{1}'^{-1} =
r^{3}\,e_{2},\qquad\qquad\quad
\omega_{1}'\,f_{2}\,\omega_{1}'^{-1} =
r^{-3}\,f_{2}$,\hspace*{\fill}

\hspace*{\fill}$\omega_{2}'\,e_{1}\,\omega_{2}'^{-1} =
s^{-3}\,e_{1},\qquad\quad\ \;\,
\omega_{2}'\,f_{1}\,\omega_{2}'^{-1} =
s^{3}\,f_{1}$,\hspace*{\fill}

\hspace*{\fill}$\omega_{2}'\,e_{2}\,\omega_{2}'^{-1} =
(r^{-3}s^{3})\,e_{2},\qquad\qquad\
\omega_{2}'\,f_{2}\,\omega_{2}'^{-1} =
(r^{3}s^{-3})\,f_{2}$.\hspace*{\fill}

\smallskip

\noindent $(G4)$ \ For $1\le i,\, j\le 2$, we have
$$
[\,e_i, f_j\,]=\delta_{ij}\frac{\om_i-\om_i'}{r_i-s_i}.
$$

\noindent $(G5)$ \, We have the following $(r,\,s)$-Serre
relations:
\begin{gather*}
e_2^{2}e_1 - (r^{-3} + s^{-3})\,e_{2}e_{1}e_{2} +
(rs)^{-3}\,e_{1}e_2^2 = 0,\tag*{$(G5)_1$}\\
\begin{split}
e_1^{4}e_2 - (r +& s)(r^2 + s^2)\,e_1^{3}e_{2}e_1 + rs(r^2 +
s^2)(r^2
+ rs + s^2)\,e_{1}^{2}e_{2}e_1^2\\
-& (rs)^3(r + s)(r^{2} + s^2)\,e_{1}e_{2}e_1^3+\, (rs)^6e_{2}e_1^4
= 0.
\end{split}\tag*{$(G5)_2$}
\end{gather*}

\noindent $(G6)$ \,  We have the following $(r,\,s)$-Serre
relations:
\begin{gather*}
f_{1}f_2^{2} - (r^{-3} + s^{-3})\,f_{2}f_{1}f_{2} +
(rs)^{-3}\,f_{2}^{2}f_1 = 0,\tag*{$(G6)_1$}\\
\begin{split}
f_{2}f_1^{4} - (r +& s)(r^2 + s^2)\,f_1f_{2}f_1^{3} + rs(r^2 +
s^2)(r^2 + rs + s^2)\,f_{1}^{2}f_{2}f_1^2\\
-& (rs)^3(r + s)(r^{2} + s^2)\,f_{1}^3f_{2}f_1 +\,
(rs)^6\,f_{1}^4f_2 = 0.
\end{split}\tag*{$(G6)_2$}
\end{gather*}
\end{defi}

The algebra $U_{r,s}(G_2)$ is a Hopf algebra, where the
$\om_i^{\pm1}, {\om_i'}^{\pm1}$  are group-like elements, and the
remaining Hopf structure is given by
\begin{gather*}
\Delta(e_i)=e_i\ot 1+\om_i\ot e_i, \qquad \Delta(f_i)=1\ot
f_i+f_i\ot \om_i',\\
\vn(\om_i^{\pm1})=\vn({\om_i'}^{\pm1})=1, \qquad
\vn(e_i)=\vn(f_i)=0,\\
S(\om_i^{\pm1})=\om_i^{\mp1}, \qquad
S({\om_i'}^{\pm1})={\om_i'}^{\mp1},\\
S(e_i)=-\om_i^{-1}e_i,\qquad S(f_i)=-f_i\,{\om_i'}^{-1}.
\end{gather*}

When $r = q=s^{-1}$, the Hopf algebra $U_{r,\,s}(G_2)$ modulo the
Hopf ideal generated by $\omega_i' - \omega_i^{-1} (i=1,2)$, is just
the quantum group $U_q(G_2)$ of Drinfel'd-Jimbo type.

In any Hopf algebra $\mathcal H$, there exist the left-adjoint and
the right-adjoint action defined by its Hopf algebra structure as
follows
$$
\text{ad}_{ l}\,a\,(b)=\sum_{(a)}a_{(1)}\,b\,S(a_{(2)}), \qquad
\text{ad}_{ r}\,a\,(b)=\sum_{(a)}S(a_{(1)})\,b\,a_{(2)},
$$
where $\Delta(a)=\sum_{(a)}a_{(1)}\ot a_{(2)}\in \mathcal H\otimes
\mathcal H$, for any $a$, $b\in \mathcal H$.

From the viewpoint of adjoint actions, the $(r,s)$-Serre relations
$(G5)$, $(G6)$ take the simplest forms:
\begin{gather*}
\bigl(\text{ad}_l\,e_i\bigr)^{1-a_{ij}}\,(e_j)=0,
\qquad\text{\it for any } \ i\ne j,\\
\bigl(\text{ad}_r\,f_i\bigr)^{1-a_{ij}}\,(f_j)=0, \qquad\text{\it
for any } \ i\ne j.
\end{gather*}

Let $U^+$ (resp. $U^-$) is the subalgebra of $U=U_{r,s}(G_2)$
generated by the elements $e_i$
 (resp. $f_i$) for $i=1,2$.  Moreover, let $\mathcal{B}$ (resp. $\mathcal{B'}$) denote
the Hopf subalgebra of $U_{r,\,s}(G_2)$, which is generated by
$e_j,\omega_j^{\pm 1}$ (resp. $f_j, \omega_j'^{\pm 1}$) with $j=1,
2$.

\begin{prop} There exists a unique
skew-dual pairing $\langle \,, \rangle: \,\mathcal{B'} \times
\mathcal{B} \longrightarrow \mathbb Q(r,\,s)$ of the Hopf
subalgebras $\mathcal{B}$ and $\mathcal{B'}$, such that
\begin{gather*}
\langle f_i,\,e_j \rangle = \delta_{ij} \frac{1}{s_i - r_i},
\qquad (1 \leq i,\,j \leq2),\\
\langle \omega_1',\,\omega_1 \rangle = rs^{-1},\quad
\langle \omega_1',\,\omega_2 \rangle = r^{-3}, \\
\langle \omega_2',\,\omega_1 \rangle = s^{3},\quad
\langle \omega_2',\,\omega_2 \rangle = r^{3}s^{-3},\\
\langle \omega_i'^{\pm 1},\,\omega_j^{-1} \rangle = \langle
\omega_i'^{\pm 1},\,\omega_j \rangle ^{-1} = \langle
\omega_i',\,\omega_j \rangle ^{\mp 1},\qquad (1 \leq i,\,j \leq 2),
\end{gather*}
and all other pairs of generators are $0$. Moreover, we have
$\langle S(a),\,S(b) \rangle = \langle a,\,b \rangle$ for $a\in
\mathcal{B}', b\in \mathcal{B}$. \hfill\qed
\end{prop}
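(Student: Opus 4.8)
The plan is to construct the skew-dual pairing by the standard procedure for quantized enveloping algebras, adapted to the two-parameter setting. First I would set up the abstract framework: recall that a skew-dual pairing $\langle\,,\,\rangle\colon \mathcal B'\times\mathcal B\to\mathbb Q(r,s)$ is required to satisfy $\langle f,\,ab\rangle=\sum_{(f)}\langle f_{(1)},\,a\rangle\langle f_{(2)},\,b\rangle$, $\langle fg,\,a\rangle=\sum_{(a)}\langle f,\,a_{(1)}\rangle\langle g,\,a_{(2)}\rangle$, $\langle f,\,1\rangle=\vn(f)$, $\langle 1,\,a\rangle=\vn(a)$ for all $a\in\mathcal B$, $f,g\in\mathcal B'$. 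Uniqueness is then immediate: since $\mathcal B'$ is generated as an algebra by $f_i,{\om_i'}^{\pm1}$ and $\mathcal B$ by $e_i,\om_i^{\pm1}$, the bilinearity together with the two multiplicativity axioms and the listed values on generators determine $\langle\,,\,\rangle$ on all of $\mathcal B'\times\mathcal B$.

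For existence, I would follow the free-algebra-and-descend approach. Let $\widetilde{\mathcal B}$ (resp. $\widetilde{\mathcal B}'$) be the Hopf algebra generated by $e_i,\om_i^{\pm1}$ (resp. $f_i,{\om_i'}^{\pm1}$) \emph{without} imposing the $(r,s)$-Serre relations $(G5)$ (resp. $(G6)$). On these ``free'' objects one first defines a pairing $\widetilde{\mathcal B}'\times\widetilde{\mathcal B}\to\mathbb Q(r,s)$ on generators by the prescribed formulas, extends it to the group-like parts (a bicharacter on the abelian groups generated by the $\om_i$ and ${\om_i'}$, which is well defined because the matrix of values is chosen compatibly — this uses $r^3\ne s^3$, $r^4\ne s^4$ only later for nondegeneracy, not for existence), and then extends it to all products by \emph{declaring} the two multiplicativity axioms to hold; the only thing to check is consistency, i.e. that evaluating $\langle f,\, (\om_i\om_i^{-1})e_j\rangle$ in two ways agrees, and similarly that $\langle f,\,\om_i e_j\om_i^{-1}\rangle$ computed via comultiplication reproduces the scalar by which $\om_i$ conjugates $e_j$ in $(G2)$ — this is exactly where the values $\langle{\om_i'},\om_j\rangle$ were forced to equal the structure constants appearing in $(G2)$/$(G3)$. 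This gives a well-defined Hopf pairing on the free level.

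The main step — and the main obstacle — is to show this descends to $\mathcal B'\times\mathcal B$, i.e. that the Serre relations lie in the radical. Concretely, writing $J^+\subset\widetilde{\mathcal B}$ for the ideal generated by the left-hand sides of $(G5)_1,(G5)_2$ and $J^-\subset\widetilde{\mathcal B}'$ for that generated by $(G6)_1,(G6)_2$, one must verify $\langle \widetilde{\mathcal B}',\,J^+\rangle=0$ and $\langle J^-,\,\widetilde{\mathcal B}\rangle=0$. For the first, because $J^+$ is a (two-sided, in fact Hopf) ideal and the pairing is multiplicative in the first slot, it suffices to check that the Serre elements pair to zero against the \emph{generators} $f_i$ and the group-likes; by the defining formula $\langle f_i,\,e_j\rangle=\delta_{ij}/(s_i-r_i)$ and $\langle f_i,\,\om_j^{\pm1}\rangle=0$, pairing the degree-$(1-a_{ij})$ Serre expression against the weight-matching $f_i$ reduces to a finite $\mathbb Q(r,s)$-coefficient identity; one computes $\Delta^{(k)}$ of each monomial $e_i^{a}e_je_i^{b}$, extracts the terms landing in the correct graded pieces, sums with the Serre coefficients, and checks the total vanishes. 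This is the genuinely computational heart: for $(G5)_2$ it is the $q$-analog of the quartic Serre identity, and the coefficients $-(r+s)(r^2+s^2)$, $rs(r^2+s^2)(r^2+rs+s^2)$, $-(rs)^3(r+s)(r^2+s^2)$, $(rs)^6$ are precisely the ones making a Gaussian-binomial-type telescoping collapse. The relation $(G6)$ side is handled symmetrically, or deduced from $(G5)$ via the antipode identity $\langle S(a),S(b)\rangle=\langle a,b\rangle$ once that is established. Finally, the antipode compatibility $\langle S(a),S(b)\rangle=\langle a,b\rangle$ follows from general Hopf-pairing nonsense: both sides are Hopf pairings $\mathcal B'\times\mathcal B\to\mathbb Q(r,s)$ (one checks $(a,b)\mapsto\langle S(a),S(b)\rangle$ satisfies the four axioms, using that $S$ is an anti-homomorphism and an anti-coalgebra map), they agree on generators by inspection of the listed values (e.g. $\langle S(f_i),S(e_j)\rangle=\langle f_i{\om_i'}^{-1},\om_j^{-1}e_j\rangle=\langle f_i,e_j\rangle$ after expanding), hence by the uniqueness just proved they coincide.
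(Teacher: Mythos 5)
Your argument is correct and is essentially the standard one: the paper states this proposition without proof (it is imported from \cite{HS}, where the pairing is constructed exactly by the free-algebra-and-descend method you outline, with the Serre relations shown to lie in the radical of the pairing and the antipode identity deduced from uniqueness of the pairing). One small point worth making explicit in your reduction of $\langle \widetilde{\mathcal B}',\,J^{+}\rangle=0$ to a check against generators: the cleanest route is to verify that each Serre element $(\mathrm{ad}_l\,e_i)^{1-a_{ij}}(e_j)$ is skew-primitive in $\widetilde{\mathcal B}$ (this is where your telescoping of the coefficients actually takes place), after which the check against a single generator $f_k$ is automatic for degree reasons, since $\langle 1,\,e_j\rangle=\langle\omega_k'^{\pm1},\,e_j\rangle=0$ forces any generator of $\widetilde{\mathcal B}'$ to pair trivially with a product of two or more $e$'s.
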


\subsection{Convex PBW-type Lyndon basis}
Recall that a reduced expression of the longest element of Weyl
group $W$ for type $G_2$ taken as
$$
w_0=s_1s_2s_1s_2s_1s_2
$$
yields a convex
ordering of positive roots below:
$$\alpha_1,\quad 3\alpha_1+\alpha_2, \quad 2\alpha_1+\alpha_2,\quad 3\alpha_1+2\alpha_2,\quad \alpha_1+\alpha_2,\quad\alpha_2.
$$
Write the positive root system $\Phi^+ = \{\alpha_1,
3\alpha_1+\alpha_2, 2\alpha_1+\alpha_2, 3\alpha_1+2\alpha_2,
\alpha_1+\alpha_2, \alpha_2\}$.

Note that the above ordering also corresponds to the standard Lyndon
tree of type $G_2$: 

\unitlength 0.8mm 
\linethickness{0.1pt}
\ifx\plotpoint\undefined\newsavebox{\plotpoint}\fi 
\begin{center}
\begin{picture}(60,45)(0,-25)
\put(0,0){\circle{2}} \put(0,-15){\circle{2}}\put(15,0){\circle*{2}}
\put(30,0){\circle*{2}} \put(45,0){\circle{2}}
\put(15,10){\circle{2}}\put(30,10){\circle{2}}
\put(45,10){\circle*{2}} \put(60,10){\circle{2}}
\put(1,0){\line(1,0){14}} \put(16,0){\line(1,0){14}}
\put(31,0){\line(1,0){13}} \put(1,0.67){\line(3,2){13}}
\put(16,0.67){\line(3,2){13}}\put(31,10){\line(1,0){14}}
\put(46,10){\line(1,0){13}} \put(-1,-7){1} \put(14,-7){1}
\put(29,-7){1} \put(44,-7){2} \put(-1,-22){2} \put(14,14){2}
\put(29,14){2} \put(44,14){1}
 \put(59,14){2}
\end{picture}\\
\end{center}

With the above ordering and notation, we can make an inductive
definition of the quantum root vectors  $ E_\alpha$ in $U^+$ as
follows. Briefly, we set $E_{i}=E_{\a_i}, E_{12}=E_{\a_1+\a_2},
E_{112}=E_{2\a_1+\a_2}, E_{1112}=E_{3\a_1+\a_2},
E_{11212}=E_{3\a_1+2\a_2}$.

We define  inductively:
\begin{eqnarray}
E_{1}=e_1, \quad  E_{2}=e_2,\\
E_{12}=e_1e_2-s^3e_2e_1,
\\
E_{112}=e_1E_{12}-rs^2E_{12}e_1, \\
E_{1112}=e_1E_{112}-r^2sE_{112}e_1,\\
E_{11212}=E_{112}E_{12}-r^2sE_{12}E_{112}.
\end{eqnarray}
Then the defining relations for $U^{+}$ in  $(G5)$ can  be
reformulated as saying
\begin{eqnarray}
E_{12}e_{2}=r^3e_{2}E_{12},\\
e_1E_{1112}=r^3E_{1112}e_1.
\end{eqnarray}

\begin{remark}(i) The defining relations in (2.1)-(2.5) can be reformulated by
the left-adjoint action defined by its Hopf algebra structure, for
example, $E_{12}=\text{ad}_{ l}\,e_1\,(e_2)=e_1e_2-s^3e_2e_1.$

(ii) Note that the matrix $(\langle \omega_i',\,\omega_j
\rangle)_{2\times 2}$ is
$ \begin{pmatrix} rs^{-1} & r^{-3}\\
s^3& r^3s^{-3}
\end{pmatrix},$
set $ p_{ji}=\langle \omega_i',\omega_j\rangle.$ As for how to get
the quantum root vector $E_\alpha$ in $U^+$, we have to add
$(r,s)$-bracketings on each good Lyndon word obeying the defining
rule as $E_\gamma:=[E_\alpha,
E_\beta]_{\langle\omega_\beta',\omega_\alpha\rangle}=E_\alpha
E_\beta-\langle \omega_\beta',\omega_\alpha\rangle E_\beta E_\alpha$
for $\alpha, \gamma,\beta\in\Phi^+$ with $\alpha<\gamma<\beta$ in
the convex ordering, and $\gamma=\alpha+\beta$.
\end{remark}

Note that Kharchenko in \cite{K1} found a PBW-type  basis for a Hopf
algebra generated by an abelian group and a finite set of skew
primitive elements such that the adjoint action of the group on the
skew primitive generators is given by multiplication with a
character. Once we construct the quantum root vectors as above,
according to \cite{K1, R2}, we will have the following result.

\begin{theorem}\label{2.4}
 $\{E_2^{n_{1
}}E_{12}^{n_2}E_{11212}^{n_3}E_{112}^{n_4}E_{1112}^{n_5}E_1^{n_6}\mid
 n_{i} \in \mathbb{N}\}$ forms a
Lyndon basis of the algebra $U^+$.
\end{theorem}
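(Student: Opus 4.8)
The plan is to realize the claimed set of ordered monomials in quantum root vectors as the specialization of Kharchenko's general PBW theorem for character Hopf algebras, following the approach of \cite{K1, R2}. First I would observe that $U^+$ is exactly the kind of algebra to which Kharchenko's theorem applies: it is generated by the abelian group (here we only need the free abelian group acting through the characters, but since $U^+$ sits inside $U$ we may work with the grading by the root lattice $Q^+$) together with the skew-primitive generators $e_1, e_2$, on which the group acts by the characters recorded in $(G2)$--$(G3)$; equivalently $U^+$ is the Nichols-type algebra defined by the braiding matrix $(\langle \omega_i', \omega_j\rangle)$ with the $(r,s)$-Serre relations $(G5)_1$, $(G5)_2$ imposed. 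Kharchenko's theorem then guarantees that $U^+$ has a PBW basis consisting of ordered products of \emph{hard super-letters}, indexed by a set of \emph{good Lyndon words}, with each letter taken to a nonnegative-integer (or, in the restricted/root-of-unity case, bounded) power.

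Second, I would identify the set of good Lyndon words. The defining relations $(G5)$ are precisely the quantum Serre relations attached to the Cartan matrix of $G_2$, so the set of good Lyndon words is in bijection with $\Phi^+$ and is read off from the convex ordering attached to the reduced word $w_0 = s_1s_2s_1s_2s_1s_2$; concretely these are the six words corresponding to $\alpha_1$, $3\alpha_1+\alpha_2$, $2\alpha_1+\alpha_2$, $3\alpha_1+2\alpha_2$, $\alpha_1+\alpha_2$, $\alpha_2$, displayed by the standard Lyndon tree in the excerpt. One then checks that the super-letter associated to each good Lyndon word, obtained by the inductive $(r,s)$-bracketing rule $E_\gamma = [E_\alpha, E_\beta]_{\langle \omega_\beta', \omega_\alpha\rangle}$ with $\gamma = \alpha+\beta$ and $\alpha < \gamma < \beta$ in the convex order, coincides up to a nonzero scalar with the quantum root vectors $E_1, E_{1112}, E_{112}, E_{11212}, E_{12}, E_2$ defined inductively in (2.1)--(2.5). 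This is the bridge between Kharchenko's abstract super-letters and the explicit elements in the statement: the bracketing prescription in Remark 2.3(ii) is exactly Kharchenko's construction of the leading super-letter of each hard super-word, so the two sets of PBW generators agree.

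Third, having matched generators, I would conclude that $\{ E_2^{n_1} E_{12}^{n_2} E_{11212}^{n_3} E_{112}^{n_4} E_{1112}^{n_5} E_1^{n_6} \mid n_i \in \mathbb{N}\}$ spans $U^+$ and is linearly independent: spanning follows from repeatedly applying the commutation/straightening relations (of which (2.6)--(2.7) are two instances, and the remaining ones are to be furnished in Section 3) to rewrite an arbitrary monomial in $e_1, e_2$ as a linear combination of such ordered monomials, while linear independence is Kharchenko's theorem together with the fact that no relation among these six roots is forced (i.e. none of the super-letters is redundant — all six good Lyndon words are ``hard''). Here the ordering of the factors must be exactly the one induced by the convex ordering on $\Phi^+$ (equivalently the lexicographic order on the good Lyndon words), which is why the factors appear in the stated sequence $E_2, E_{12}, E_{11212}, E_{112}, E_{1112}, E_1$.

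The main obstacle I expect is not the invocation of Kharchenko's theorem itself but the verification that the inductively defined $E_{12}, E_{112}, E_{1112}, E_{11212}$ genuinely are (scalar multiples of) the hard super-letters for $G_2$ in this \emph{two-parameter} braiding — i.e. that the particular scalars $s^3, rs^2, r^2s, r^2s$ chosen in the brackets (2.2)--(2.5) are the correct ones $\langle \omega_\beta', \omega_\alpha\rangle$ dictated by the convex order, and that the resulting monomials satisfy the Serre-type relations needed to close the rewriting system (such as (2.6)--(2.7)). Because $G_2$ is non-simply-laced and the braiding is genuinely two-parameter, one cannot simply import Lusztig's braid-group computation; the scalars have to be tracked by hand through the inductive definition, and one must confirm that there is no additional hidden root vector or relation beyond the six listed. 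Once these scalar bookkeeping checks are in place, the theorem follows formally from \cite{K1} (cf. \cite{R2}).
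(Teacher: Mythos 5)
Your proposal follows essentially the same route as the paper: the paper also constructs the quantum root vectors by the convex-order $(r,s)$-bracketing on the good Lyndon words of type $G_2$ and then deduces the basis directly from Kharchenko's PBW theorem for character Hopf algebras (citing \cite{K1, R2}). Your additional remarks on verifying the bracketing scalars and the absence of extra hard super-letters are sound and merely make explicit what the paper leaves implicit.
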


Recall that $\tau$ (see \cite{HRZ}) as a $\mathbb
Q$-anti-automorphism of $U_{r,s}(G_2)$ such that $\tau(r)=s$,
$\tau(s)=r$, $\tau(\langle \om_i',\om_j\rangle^{\pm1})=\langle
\om_j',\om_i\rangle^{\mp1}$, and
\begin{gather*}
\tau(e_i)=f_i, \quad \tau(f_i)=e_i, \quad \tau(\om_i)=\om_i',\quad
\tau(\om_i')=\om_i.
\end{gather*}
Using $\tau$ to $U^+$, we can get those negative quantum root
vectors in $U^-$. Define $F_{i}=\tau( E_{i})=f_i$ for $1\leq i\leq
2$, and
\begin{eqnarray}
F_{12}=\tau(E_{12})=f_2f_1-r^{3}f_1f_2,
\\
F_{112}=\tau(
E_{112})=F_{12}f_1-r^{2}sf_1F_{12}, \\
F_{1112}=\tau(
E_{1112})=F_{112}f_1-rs^{2}f_1F_{112},\\
F_{11212}=\tau(E_{11212})=F_{12}F_{112}-rs^{2}F_{112}F_{12}.
\end{eqnarray}
\begin{coro}\label{2.5}
$\{F_1^{m_1
}F_{1112}^{m_2}F_{112}^{m_3}F_{11212}^{m_4}F_{12}^{m_5}F_2^{m_6}\mid
m_{i} \in \mathbb{N}\}$ forms a Lyndon basis of the algebra $U^-$.
\end{coro}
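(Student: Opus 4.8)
The plan is to obtain the statement directly from Theorem~\ref{2.4} by pushing the Lyndon basis of $U^+$ through the $\mathbb{Q}$-anti-automorphism $\tau$. First I would record the three properties of $\tau$ that drive the argument: (a) $\tau(xy)=\tau(y)\tau(x)$, so $\tau$ reverses products; (b) $\tau$ is semilinear over the field automorphism of $\mathbb{K}=\mathbb{Q}(r,s)$ interchanging $r$ and $s$; and (c) $\tau^2=\mathrm{id}$, which one checks on the generators $e_i,f_i,\om_i^{\pm1},{\om_i'}^{\pm1}$ and on scalars and then extends multiplicatively. From (c), together with the fact that $\tau$ swaps the generating sets $\{e_i\}$ and $\{f_i\}$, it follows that $\tau$ restricts to an involutive bijection $U^+\to U^-$.

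Next I would check that $\tau$ turns each PBW monomial appearing in Theorem~\ref{2.4} into one of the monomials in the asserted set, with the order of the factors reversed. By the definitions $F_i=\tau(E_i)=f_i$, $F_{12}=\tau(E_{12})$, $F_{112}=\tau(E_{112})$, $F_{1112}=\tau(E_{1112})$, $F_{11212}=\tau(E_{11212})$ (for instance $\tau(E_{12})=\tau(e_1e_2-s^3e_2e_1)=f_2f_1-r^3f_1f_2=F_{12}$), property (a) yields
\[
\tau\left(E_2^{n_1}E_{12}^{n_2}E_{11212}^{n_3}E_{112}^{n_4}E_{1112}^{n_5}E_1^{n_6}\right)=F_1^{n_6}F_{1112}^{n_5}F_{112}^{n_4}F_{11212}^{n_3}F_{12}^{n_2}F_2^{n_1}.
\]
As $(n_1,\dots,n_6)$ ranges over $\mathbb{N}^6$, the right-hand sides are precisely the monomials $F_1^{m_1}F_{1112}^{m_2}F_{112}^{m_3}F_{11212}^{m_4}F_{12}^{m_5}F_2^{m_6}$ after the reindexing $m_j=n_{7-j}$. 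Observe that the ordering $F_1,F_{1112},F_{112},F_{11212},F_{12},F_2$ is exactly the convex ordering $\alpha_1,3\alpha_1+\alpha_2,2\alpha_1+\alpha_2,3\alpha_1+2\alpha_2,\alpha_1+\alpha_2,\alpha_2$ of $\Phi^+$, i.e.\ the reverse of the order used in Theorem~\ref{2.4}, so the reversal produced by $\tau$ is consistent with what the corollary asserts.

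Finally I would verify that a semilinear bijection such as $\tau$ carries a $\mathbb{K}$-basis to a $\mathbb{K}$-basis. Writing $\{v_i\}$ for the Lyndon basis of $U^+$, if $\sum_i c_i\tau(v_i)=0$ with $c_i\in\mathbb{K}$, then applying $\tau$ and using that scalars are central gives $\sum_i\tau(c_i)\,v_i=0$, hence $\tau(c_i)=0$ and so $c_i=0$ since $r\leftrightarrow s$ is injective on $\mathbb{K}$; spanning follows symmetrically by expanding $\tau(w)$ in the $v_i$ for an arbitrary $w\in U^-$. Combining the three steps shows that $\{F_1^{m_1}F_{1112}^{m_2}F_{112}^{m_3}F_{11212}^{m_4}F_{12}^{m_5}F_2^{m_6}\mid m_i\in\mathbb{N}\}$ is a $\mathbb{K}$-basis of $U^-$. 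I do not expect a genuine obstacle: the only points needing care are the semilinearity bookkeeping in this last step and confirming that the order of the factors reverses correctly; everything else is a formal consequence of Theorem~\ref{2.4}. As a byproduct, applying $\tau$ to the reformulated Serre relations $E_{12}e_2=r^3e_2E_{12}$ and $e_1E_{1112}=r^3E_{1112}e_1$ gives the corresponding commutation relations among the negative root vectors, which are convenient for later normal-ordering computations.
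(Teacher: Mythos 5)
Your proposal is correct and is exactly the argument the paper intends: the corollary is stated as an immediate consequence of Theorem~\ref{2.4} via the $\mathbb{Q}$-anti-automorphism $\tau$, which sends each PBW monomial of $U^+$ to the corresponding reversed monomial in the $F_\alpha$'s, and your semilinearity bookkeeping (a bijection semilinear over the involution $r\leftrightarrow s$ of $\mathbb{K}$ carries a $\mathbb{K}$-basis to a $\mathbb{K}$-basis) is the only point that needed to be made explicit.
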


\section{Restricted two-parameter quantum groups}
From now on, we restrict the parameters $r$ and $s$ to be roots of
unity: $r$ is a primitive $d$th root of unity, $s$ is a primitive
$d'$th root of unity and $\ell$ is the least common multiple of $d$
and $d'$. We suppose that $\mathbb{K}$ contains a primitive $\ell
$th root of unity. We will construct a finite-dimensional Hopf
algebra $\mathfrak{u}_{r,s}(G_2)$ of dimension $\ell^{16}$ as a
quotient of $U_{r,s}(G_2)$ by a Hopf ideal $\mathcal{I}$, which is
generated by certain central elements.

\subsection{Commutation relations in $U^{+}$ and central elements}
We give some commutation relations, which hold in the positive part
of two-parameter quantum group of type $G_2$, and the relations are
actually useful for determining central elements in  this subsection
and integrals in section 6.

In the following lemmas, we adopt the following notational
conventions: $\xi=r^2-s^2+rs$, $\eta=r^2-s^2-rs$ and $\ze
=(r^3-s^3)(r+s)^{-1}$, and we will need the $r,s$-integers,
factorials and binomial coefficients defined for positive integers
$i$,  $n$ and $m$ by
$$[n]_i:=\frac{r^{in}-s^{in}}{r^i-s^i},\quad [n]_1:=[n],$$  $$[n]!:=[n][n-1]\cdots
[2][1],\quad
 \left[n\atop m\right]:=\frac{[n]!}{[m]![n-m]!}. $$ By
convention $[0]=0$ and $[0]!=1$.

\begin{lemm}\label{3.1}\footnote {The proof of (6) is different from the one
given in Lemma 3.7 in \cite{HS}.} The following relations hold in
$U^{+}:$

$(1)\quad E_{112}e_2=(rs)^{3}e_2E_{112}+r(r^2{-}s^2)E_{12}^2;$

$(2)\quad
E_{11212}e_2=(r^2s)^3e_2E_{11212}+r^3(r{-}s)(r^2{-}s^2)E_{12}^3;$

$(3)\quad E_{1112}e_2=(rs^2)^3e_2E_{1112}+(r^2s)(r^3{-}s^3)
E_{12}E_{112} +r\eta E_{11212};$

$(4)\quad E_{1112}E_{12}=(rs)^3E_{12}E_{1112}+r\ze E_{112}^2;$

$(5)\quad e_1E_{11212}=(rs)^3E_{11212}e_{1}+r\ze E_{112}^2;$

$(6)\quad E_{1112}E_{112}=r^3E_{112}E_{1112};$

$(7)\quad E_{1112}E_{11212}=(r^2s)^3E_{11212}E_{1112}+r^3\ze
(r{-}s)E_{112}^3.$

$(8)\quad E_{11212}E_{12}=r^3E_{12}E_{11212};$

$(9)\quad E_{112}E_{11212}=r^3E_{11212}E_{112}.$
\end{lemm}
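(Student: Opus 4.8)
The plan is to prove all nine identities of Lemma~\ref{3.1} by a uniform strategy: express each quantum root vector through its inductive definition \eqref{2.1}--\eqref{2.5}, reduce every bracket to a word in $e_1,e_2$, and repeatedly substitute the two "straightening" relations $E_{12}e_2=r^3e_2E_{12}$ and $e_1E_{1112}=r^3E_{1112}e_1$ together with the already-proved items of the lemma. Concretely, one fixes the ordering in which the identities are established so that each proof only invokes earlier ones: I would do $(1)$ first (only needs $E_{112}=e_1E_{12}-rs^2E_{12}e_1$ and the relation $E_{12}e_2=r^3e_2E_{12}$), then $(8)$ and $(9)$ (these are "degenerate" commutations that follow by a short induction once $(1)$ is available, since $E_{11212}=E_{112}E_{12}-r^2sE_{12}E_{112}$), then $(2)$ (push $e_2$ through $E_{11212}$ using $(1)$ and $(8)$), then $(4)$ and the symmetric $(5)$ (here one expands $E_{1112}=e_1E_{112}-r^2sE_{112}e_1$ and commutes $e_1$ past $E_{12}$ via $E_{112}$), then $(6)$ (the clean relation $E_{1112}E_{112}=r^3E_{112}E_{1112}$, which by the footnote deserves a fresh derivation — I would get it from $(4)$ and $(5)$ by a cancellation of the $E_{112}^2$ terms), then $(3)$ (combine $(1)$, $(2)$ and the definition of $E_{1112}$), and finally $(7)$ (expand $E_{11212}$ inside $E_{1112}E_{11212}$ and use $(6)$, $(8)$, $(9)$, $(4)$).

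The mechanical core of each step is the same: one writes the left-hand side as a noncommutative polynomial in $e_1,e_2$ of a fixed multidegree, does the same for the right-hand side, and checks that after applying the defining $(r,s)$-Serre relations $(G5)_1$, $(G5)_2$ (equivalently \eqref{2.6}, \eqref{2.7}) the two sides coincide as elements of $U^+$. By Theorem~\ref{2.4} it suffices to verify the coefficient identities on the PBW-Lyndon basis, so in principle every claim is a finite rational-function identity in $r,s$ that can be checked directly; I would present only the nontrivial reductions and leave the bookkeeping of scalars (the appearance of $\xi,\eta,\zeta$ and the factors like $r(r^2-s^2)$, $r^3(r-s)(r^2-s^2)$) to the reader, since these are exactly the combinations forced by collecting terms. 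The role of $\zeta=(r^3-s^3)(r+s)^{-1}$ in $(4)$, $(5)$, $(7)$ is worth isolating: it is what remains after the $(r+s)$-denominators coming from $[2]$-type coefficients cancel against a numerator $r^3-s^3$ produced when $E_{12}^{\,\cdot}$-terms are symmetrized.

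The main obstacle will be items $(3)$ and $(7)$, where three distinct monomials in the root vectors ($E_{12}E_{112}$, $E_{11212}$, and an $E_{112}$-power) survive and must be disentangled simultaneously; getting the coefficient $r\eta$ in $(3)$ and $r^3\zeta(r-s)$ in $(7)$ requires careful use of $E_{11212}=E_{112}E_{12}-r^2sE_{12}E_{112}$ to convert between the "long" and "short" presentations of the degree-$(3\alpha_1+2\alpha_2)$ part, and one must be vigilant that no spurious $E_{112}E_{1112}$ versus $E_{1112}E_{112}$ ordering ambiguity creeps in before $(6)$ is in hand. A secondary subtlety is $(6)$ itself: as the footnote signals, the naive derivation mimicking Lemma 3.7 of \cite{HS} is unsatisfactory, so I would instead subtract a suitable multiple of $(5)$ from a suitable multiple of $(4)$ (or commute $e_1$ through the $(4)$-identity and use the $G_2$ Serre relation \eqref{2.7}) to eliminate the $\zeta E_{112}^2$ correction entirely and land on the exact relation $E_{1112}E_{112}=r^3E_{112}E_{1112}$. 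Once $(1)$--$(9)$ are in place in this order, each proof is a short, self-contained calculation.
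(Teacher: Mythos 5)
Your overall strategy (reduce everything to words in $e_1,e_2$ via the definitions (2.1)--(2.5) and the straightening relations (2.6), (2.7), then compare in a fixed weight space) is sound in principle, but the dependency order you propose is not realizable, and this is where the actual content of the lemma lives. The most serious problem is your claim that $(8)$ and $(9)$ ``follow by a short induction once $(1)$ is available, since $E_{11212}=E_{112}E_{12}-r^2sE_{12}E_{112}$.'' Expanding from that definition gives
$E_{11212}E_{12}-r^3E_{12}E_{11212}=E_{112}E_{12}^2-(r^2s+r^3)E_{12}E_{112}E_{12}+r^5sE_{12}^2E_{112}$,
and the vanishing of the right-hand side is a genuine Serre-type relation in weight $4\alpha_1+3\alpha_2$ that does not follow from $(1)$ and the definition alone. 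In the paper, $(8)$ is the hardest item: it is obtained by expanding $E_{11212}(e_1e_2-s^3e_2e_1)$, invoking $(5)$ and $(2)$ together with three auxiliary identities, and it uses the hypothesis $r^2+s^2-rs\neq 0$ to divide out a scalar at the end; $(9)$ then uses $(5)$ and $(8)$. So $(8)$ and $(9)$ must come after $(2)$ and $(5)$, not before.

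Two further gaps. First, you propose to prove $(4)$ and $(5)$ before $(3)$; but the identity $e_1E_{11212}-(rs)^3E_{11212}e_1=E_{1112}E_{12}-(rs)^3E_{12}E_{1112}$ is a formal consequence of (2.3)--(2.5), so $(4)$ and $(5)$ are equivalent to each other and neither can be derived without an independent input. The paper supplies that input by first proving $(3)$ (from $(1)$, (2.2)--(2.5)) and then extracting $(4)$ from the expansion of $E_{1112}(e_1e_2-s^3e_2e_1)$, dividing by $r+s$. Your sketch ``expand $E_{1112}$ and commute $e_1$ past $E_{12}$ via $E_{112}$'' just reproduces the tautology. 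Second, and for the same reason, your primary plan for $(6)$ --- ``subtract a suitable multiple of $(5)$ from a suitable multiple of $(4)$'' --- yields $0=0$, not $E_{1112}E_{112}=r^3E_{112}E_{1112}$. Your fallback (commute $e_1$ through using (2.7)) is closer to the paper's actual argument, which computes $E_{1112}E_{112}$ in two distinct ways, once via (2.3), (2.7) and $(4)$ and once via (2.4), and compares them using $r^2+s^2\neq 0$; but as written your proposal does not identify the second expression. Finally, deferring ``the bookkeeping of scalars to the reader'' is not acceptable here: the coefficients $r\eta$ in $(3)$, $r\zeta$ in $(4)$--$(5)$ and $r^3\zeta(r-s)$ in $(7)$, and the divisions by $r+s$, $r^2+s^2$ and $r^2-rs+s^2$, are precisely the nontrivial output of the lemma and must be exhibited.
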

\begin{proof}
(1) follows directly from (2.3), (2.6) and (2.2).

\smallskip

(2) follows from (2.5), (2.6) and  (1).

\smallskip
(3): Using (2.3), we have $$
e_1E_{12}^2=E_{112}E_{12}+rs^2E_{12}E_{112}+(rs^2)^2E_{12}^2e_1.$$
Furthermore, using (2.4), (1) and (2.2), (2.5), we have
\begin{equation*}
\begin{split}
E_{1112}e_2&=
(e_1E_{112}-r^2sE_{112}e_1)e_2 \\
&=(rs)^3e_1e_2E_{112}+r(r^2{-}s^2)e_1E_{12}^2-r^2sE_{112}e_1e_2\\
&=(rs)^3(E_{12}+s^3e_2e_1)E_{112}+r(r^2{-}s^2)e_1E_{12}^2-r^2sE_{112}(E_{12}+s^3e_2e_1)\\
&=(rs)^3E_{12}E_{112}+(rs^2)^3e_2e_1E_{112}+r(r^2{-}s^2)e_1E_{12}^2-r^2sE_{112}E_{12}\\&\quad
-
(rs^2)^2((rs)^3e_2E_{112}+r(r^2{-}s^2)E_{12}^2)e_1\\
&=(rs)^3E_{12}E_{112}+(rs^2)^3e_2e_1E_{112}
+r(r^2{-}s^2)E_{112}E_{12}\\& \quad+
(rs)^2(r^2{-}s^2)E_{12}E_{112}+r^3s^4(r^2{-}s^2)E_{12}^2e_1
-r^2sE_{112}E_{12} \\&
\quad+(rs^2)^3e_2E_{1112}-(rs^2)^3e_2e_1E_{112}
-r^3s^4(r^2{-}s^2)E_{12}^2e_1\\
&=(rs^2)^3e_2E_{1112}+(r^2s)(r^3{-}s^3) E_{12}E_{112} +r\eta
E_{11212}.
\end{split}
\end{equation*}

\smallskip
(4): Using (2.2), (2.7) and (3), we have
\begin{equation*}
\begin{split}
E_{1112}E_{12}&=
E_{1112}(e_1e_2-s^3e_2e_1) \\
&=r^{-3}e_1E_{1112}e_2-s^3E_{1112}e_2e_1\\
&=r^{-3}e_1((rs^2)^3e_2E_{1112}+(rs)^2\xi E_{12}E_{112} +r\eta
E_{112}E_{12})\\& \quad -s^3((rs^2)^3e_2E_{1112}+(rs)^2\xi
E_{12}E_{112}
+r\eta E_{112}E_{12})e_1\\
&=s^6e_1 e_2E_{1112}+r^{-1}s^2\xi e_1E_{12}E_{112} +r^{-2}\eta
e_1E_{112}E_{12}\\
&\quad -(rs^3)^3e_2E_{1112}e_1-r^2s^5\xi E_{12}E_{112}e_1 -rs^3\eta
E_{112}E_{12}e_1\\& =s^6E_{12}E_{1112}+r^{-1}s^2\xi E_{112}^2+s^4\xi
E_{12}e_1E_{112}+r^{-2}\eta E_{1112}E_{12}\\
&\quad +s\eta E_{112}e_1E_{12}-r^2s^5\xi E_{12}E_{112}e_1-rs^3\eta
E_{112}E_{12}e_1\\& = s^6E_{12}E_{1112}+s^4\xi
E_{12}E_{1112}+r^{-1}s^2\xi E_{112}^2+r^{-2}\eta
E_{1112}E_{12}+s\eta E_{112}^2,
\end{split}
\end{equation*}
that is,
$$(r{+}s)E_{1112}E_{12}=(rs)^3(r{+}s)E_{12}E_{1112}+r(r^3{-}s^3)E_{112}^2.$$
Since we have assumed that $r^2\neq s^2$, this implies (4).

\smallskip
(5): Using (2.5), (2.4) and (2.3), we have
$$e_1E_{11212}-(rs)^3E_{11212}e_1=
E_{1112}E_{12}-(rs)^3E_{12}E_{1112},$$  and using (4), we have the
desired result.

\smallskip
(6): Using (2.3), (2.7) and (4), we have
\begin{equation*}
\begin{split}
E_{1112}E_{112}=s^3E_{112}E_{1112}+r^{-2}\ze
(e_1E_{112}^2-(r^2s)^2E_{112}^2e_1),
\end{split}
\end{equation*}
on the other hand, using (2.4), we have
\begin{equation*}
\begin{split}
E_{1112}E_{112}=
e_1E_{112}^2-r^2sE_{112}E_{1112}-(r^2s)^2E_{112}^2e_1,
\end{split}
\end{equation*}
so that owing to $r^2+s^2\ne 0$, we obtain
$$E_{1112}E_{112}=r^3E_{112}E_{1112}.$$

\smallskip
(7) can be proved by using (2.5), (6) and (4).

\smallskip
(8): It is easy to check the following relations:
$$e_1E_{12}^3-(rs^2)^3E_{12}^3e_1=E_{112}E_{12}^2+rs^2E_{12}E_{11212}
+r^2s^3(r{+}s)E_{12}^2E_{112},\eqno(*)$$
$$E_{112}^2e_2-(rs)^6e_2E_{112}^2=r^4s^3(r^2{-}s^2)E_{12}^2E_{112}
+r(r^2{-}s^2)E_{112}E_{12}^2,\eqno(**)$$
$$E_{112}E_{12}^2-(r^2s)^2E_{12}^2E_{112}=E_{11212}E_{12}+r^2sE_{12}E_{11212}.\eqno(***)$$
Using (2.2), (5) and (2), we have
\begin{equation*}
\begin{split}
E_{11212}E_{12}&=E_{11212}(e_1e_2-s^3e_2e_1)\\
&=(rs)^{-3}(e_1E_{11212}-r\ze E_{112}^2)e_2\\
& \quad
-s^3((r^2s)^3e_2E_{11212}+r^3(r{-}s)(r^2{-}s^2)E_{12}^3)e_1\\
&=s^{-3}e_1((rs)^3e_2E_{11212}+(r{-}s)(r^2{-}s^2)E_{12}^3)\\& \quad
-r^{-2}s^{-3}\ze E_{112}^2e_2-(rs)^3e_2(e_1E_{11212} -r\ze
E_{112}^2)\\& \quad -(rs)^3(r{-}s)(r^2{-}s^2)E_{12}^3e_1\\
&=r^3E_{12}E_{11212}\\&\quad
+s^{-3}(r{-}s)(r^2{-}s^2)(E_{112}E_{12}^2+rs^2E_{12}E_{11212}+r^2s^3(r{+}s)E_{12}^2E_{112})\\&
\quad -r^{-2}s^{-3}\ze
(r^4s^3(r^2{-}s^2)E_{12}^2E_{112}+r(r^2{-}s^2)E_{112}E_{12}^2)\\
&=r^3E_{12}E_{11212}-(rs)^{-1}(r{-}s)^2(E_{112}E_{12}^2-(r^2s)^2E_{12}^2E_{112})\\&\quad
+rs^{-1}(r{-}s)(r^2{-}s^2)E_{12}E_{11212}\\
&=r^3E_{12}E_{11212}-(rs)^{-1}(r{-}s)^2(E_{11212}E_{12}+r^2sE_{12}E_{11212})\\&\quad
+rs^{-1}(r{-}s)(r^2{-}s^2)E_{12}E_{11212},
\end{split}
\end{equation*}
so that,
$$(1{+}r^{-1}s^{-1}(r{-}s)^2)E_{11212}E_{12}=(r^3{+}rs^{-1}(r{-}s)(r^2{-}s^2)
-r(r{-}s)^2)E_{12}E_{11212}.$$
Since we have assumed that $r^2+s^2-rs\neq 0$, this implies
$$E_{11212}E_{12}=r^3E_{12}E_{11212}.$$

\medskip
(9): On the one hand, we have
\begin{equation*}
\begin{split}
E_{112}E_{11212}&=(e_1E_{12}-rs^2E_{12}e_1)E_{11212}\\&
=r^{-3}e_1E_{11212}E_{12}-rs^2E_{12}e_1E_{11212}\\&
=r^{-3}((rs)^3E_{11212}e_{1}+r\ze E_{112}^2)E_{12}\\&\quad
-rs^2E_{12}((rs)^3E_{11212}e_{1}+r\ze E_{112}^2)\\&=
s^3E_{11212}E_{112}+r^{-2}\ze
(E_{112}^2E_{12}-(r^2s)^2E_{12}E_{112}^2),
\end{split}
\end{equation*}
on the other hand, we have
\begin{equation*}
\begin{split}
E_{112}^2E_{12}=E_{112}E_{11212}+r^2sE_{11212}E_{112}+(r^2s)^2E_{12}E_{112}^2,
\end{split}
\end{equation*}
hence, we obtain $$E_{112}E_{11212}=r^3E_{11212}E_{112},$$ since
$r^2+s^2\neq 0.$
\end{proof}

The main result of this subsection is the following theorem.
\begin{theorem}\label{3.2}
$E_{\a}^{\ell}$, $F_{\a}^{\ell}$ $(\a \in \Phi^+)$
 and $\omega_k^{\ell}-1,\ (\omega_k')^{\ell}-1 \
( k=1, 2)$ are central in $U_{r,s}(G_2)$.
\end{theorem}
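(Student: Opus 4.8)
The plan is to verify that each of the listed elements commutes with all the generators $e_i$, $f_i$, $\omega_i^{\pm 1}$, $(\omega_i')^{\pm 1}$ of $U_{r,s}(G_2)$; since these generate $U$, this suffices. I would organize the argument according to the weight grading. First, the group-like part: each of $E_\alpha^\ell$ and $F_\alpha^\ell$ is a weight vector, and from the relations $(G2)$, $(G3)$ the action of $\omega_k$ or $\omega_k'$ by conjugation on $E_\alpha$ (resp. $F_\alpha$) is multiplication by a monomial in $r,s$ whose exponents are integer combinations of the entries $rs^{-1}$, $r^{\pm 3}$, $s^{\pm 3}$, $r^3 s^{-3}$ appearing in the $\langle \omega_i',\omega_j\rangle$ matrix. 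Raising to the $\ell$-th power multiplies these scalars by $\ell$, and since $r^\ell = s^\ell = 1$ (as $d \mid \ell$, $d' \mid \ell$), every such scalar becomes $1$; hence $E_\alpha^\ell$ and $F_\alpha^\ell$ commute with all $\omega_k^{\pm1}$, $(\omega_k')^{\pm1}$. Conversely $\omega_k^\ell - 1$ and $(\omega_k')^\ell - 1$ are central among the group-likes by $(G1)$, commute with all $e_j$, $f_j$ for the same scalar-collapse reason, and the constant $-1$ is of course central; so these four elements are central, and it remains to show $E_\alpha^\ell$ commutes with all $f_j$ and $F_\alpha^\ell$ commutes with all $e_j$ (commutation of $E_\alpha^\ell$ with $e_j$, and $F_\alpha^\ell$ with $f_j$, is the harder "same-sign" part treated below).

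For the cross terms, I would use the adjoint-action formulation. By $(G4)$, $\ad_l\, f_i$ acts on $U^+$, and $[\, \ad_l f_i,\, E_\alpha\,]$-type computations express $f_i E_\alpha - (\text{scalar}) E_\alpha f_i$ as a $U^+$-element of strictly lower height (the standard "$f$ lowers degree" phenomenon, valid because $\Delta(e_i) = e_i\ot 1 + \omega_i\ot e_i$ makes $e_i$ skew-primitive). Concretely, for each root vector $E_\alpha$ one shows $f_j E_\alpha^\ell = E_\alpha^\ell f_j$ by first establishing a formula of the shape $f_j E_\alpha^n = E_\alpha^n f_j + [n]_{?}\, (\text{lower-degree } U^+ \text{ term})\, (\omega\text{-factor})$ by induction on $n$, where the coefficient is an $r,s$-integer $[n]_i$ (or a product thereof) arising from the geometric-series collapse of the conjugation scalars; at $n = \ell$ this coefficient $[\ell]_i = (r^{i\ell} - s^{i\ell})/(r^i - s^i) = 0$ vanishes. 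The analogous statement with $E$ and $e$ swapped for $F_\alpha^\ell$ versus $f_j$ (and the "same-sign" cases $E_\alpha^\ell$ with $e_j$, $F_\alpha^\ell$ with $f_j$) follows by applying the anti-automorphism $\tau$ of the excerpt, which swaps $e\leftrightarrow f$, $r\leftrightarrow s$, and sends $E_\alpha \mapsto F_{\tau(\alpha)}$; since $[\ell]$ computed with $s,r$ in place of $r,s$ is still $0$, no new work is needed.

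The main obstacle is the bookkeeping for the "same-sign" commutations, e.g. showing $e_1 E_\alpha^\ell = E_\alpha^\ell e_1$ and $e_2 E_\alpha^\ell = E_\alpha^\ell e_2$ for the non-simple $\alpha$: here one cannot lower height, and instead one must exploit precisely the commutation relations assembled in Lemma \ref{3.1} together with $(2.6)$, $(2.7)$. For instance $E_{12} e_2 = r^3 e_2 E_{12}$ and $e_1 E_{1112} = r^3 E_{1112} e_1$ give clean $q$-commutations directly, so $E_{12}^\ell$ and $E_{1112}^\ell$ move past $e_2$, $e_1$ respectively up to the scalar $r^{3\ell} = 1$; for $E_{112}$, $E_{1112}$, $E_{11212}$ against the "wrong" generator one uses relations (1)–(9), which have the form $X Y = (\text{scalar}) Y X + (\text{scalar})\, Z$ with $Z$ a product of lower root vectors, and iterates: raising $X$ to the $\ell$-th power, the correction terms accumulate with $r,s$-integer coefficients that vanish at $\ell$ (this is the same mechanism as \cite{BW3, HW2}, and is where the hypotheses $r^2 \neq s^2$, $r^2 - s^2 \pm rs \neq 0$, $r^3\neq s^3$, $r^4\neq s^4$ used throughout Lemma \ref{3.1} are consumed). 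I would present one representative induction in full — say $E_{112}^\ell$ commuting with $e_2$, using relation (1) — and remark that the remaining cases are entirely parallel, citing the $G_2$ combinatorics of Lemma \ref{3.1} and the $\tau$-symmetry to reduce the $F$-side to the $E$-side.
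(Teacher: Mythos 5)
Your proposal is correct and follows essentially the same route as the paper: the paper's proof consists precisely of establishing, by induction on $a$, power-commutation formulas $e_j E_\alpha^a = (\text{scalar})E_\alpha^a e_j + (\text{corrections with }[a],\ \bigl[{a\atop 2}\bigr],\ [a]_3\text{-coefficients})$ and $E_\alpha^a f_j = f_j E_\alpha^a + (\text{corrections})\,\omega_j'$ (Lemmas \ref{3.4}--\ref{3.7}, built on the Lemma \ref{3.1} relations), observing that all correction coefficients vanish at $a=\ell$, transporting the result to $U^-$ by $\tau$, and treating the group-like elements by the scalar-collapse argument you describe. The only difference is presentational: the paper writes out every induction explicitly rather than one representative case, which is genuinely needed here since the individual formulas (e.g.\ for $E_{112}^a e_2$ and $E_{1112}^a e_2$) are not literal repetitions of one another, though the vanishing mechanism is uniform as you say.
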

The proof of Theorem \ref{3.2} will be achieved through a sequence
of lemmas.

\begin{lemm}\label{3.3} Let $x, y, z$ be elements of a  $\mathbb{K}$-algebra such that  $yx=\a
xy+z$  for some $\a \in \mathbb{K}$ and $n$ a natural number. Then
the following assertions hold.

$(1)$ If  $zx=\b xz$ for some $\b(\neq \a)  \in \mathbb{K}$, then
$yx^n=\a^nx^ny+\frac{\a^n-\b^n}{\a-\b}x^{n-1}z$;

$(2)$ If  $yz=\b zy$ for some $\b(\neq \a)  \in \mathbb{K}$, then
$y^nx=\a^nxy^n+\frac{\a^n-\b^n}{\a-\b}zy^{n-1}$. \hfill\qed
\end{lemm}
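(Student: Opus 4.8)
The plan is to prove Lemma~\ref{3.3} by a straightforward induction on $n$, treating the two parts symmetrically. For part $(1)$, I would induct on $n$, the base case $n=1$ being exactly the hypothesis $yx=\a xy+z$ together with the trivial identity $\frac{\a-\b}{\a-\b}=1$. For the inductive step, assume $yx^n=\a^n x^n y+\frac{\a^n-\b^n}{\a-\b}x^{n-1}z$; then compute
\begin{equation*}
yx^{n+1}=(yx^n)x=\Bigl(\a^n x^n y+\tfrac{\a^n-\b^n}{\a-\b}x^{n-1}z\Bigr)x
=\a^n x^n(yx)+\tfrac{\a^n-\b^n}{\a-\b}x^{n-1}(zx).
\end{equation*}
Now substitute $yx=\a xy+z$ and $zx=\b xz$ to get
\begin{equation*}
yx^{n+1}=\a^{n+1}x^{n+1}y+\a^n x^n z+\tfrac{\a^n-\b^n}{\a-\b}\,\b\, x^n z,
\end{equation*}
and it remains to check the scalar identity $\a^n+\frac{\a^n-\b^n}{\a-\b}\b=\frac{\a^{n+1}-\b^{n+1}}{\a-\b}$, which follows from clearing denominators: $\a^n(\a-\b)+(\a^n-\b^n)\b=\a^{n+1}-\a^n\b+\a^n\b-\b^{n+1}=\a^{n+1}-\b^{n+1}$. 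This closes the induction.

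Part $(2)$ is the mirror image: here the iteration is on the left factor $y$ rather than the right factor $x$. I would again induct on $n$, with base case $n=1$ immediate. For the step, write $y^{n+1}x=y(y^n x)=y\bigl(\a^n x y^n+\frac{\a^n-\b^n}{\a-\b}z y^{n-1}\bigr)=\a^n(yx)y^n+\frac{\a^n-\b^n}{\a-\b}(yz)y^{n-1}$, then apply $yx=\a xy+z$ and $yz=\b zy$ to obtain $\a^{n+1}xy^{n+1}+\a^n z y^n+\frac{\a^n-\b^n}{\a-\b}\b z y^n$, and the same scalar identity finishes the job. Alternatively, one could deduce $(2)$ from $(1)$ by passing to the opposite algebra, where the roles of left and right multiplication are swapped and the relation $yx=\a xy+z$ becomes $xy=\a yx+z$; but writing the direct induction is cleaner and avoids bookkeeping about which variable plays which role.

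There is essentially no obstacle here: the lemma is a purely formal commutation identity, and the only thing to be careful about is the degenerate case $\a=\b$, which the hypothesis explicitly excludes so that the coefficient $\frac{\a^n-\b^n}{\a-\b}$ is well-defined (in that excluded case the correct coefficient would be $n\a^{n-1}$ instead). One should also note that the hypotheses are consistent: if $yx=\a xy+z$ and additionally $zx=\b xz$, nothing forces any further relation, so the formula in $(1)$ is the complete normal-ordering rule, and similarly for $(2)$. This lemma will then be applied repeatedly in the sequel with $x,y$ taken to be pairs of quantum root vectors $E_\a$ and the ``error term'' $z$ a monomial in lower root vectors, exactly in the pattern exhibited by the relations $(1)$--$(9)$ of Lemma~\ref{3.1}, to push powers $E_\a^{\ell}$ past the generators and thereby establish Theorem~\ref{3.2}.
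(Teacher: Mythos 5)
Your proof is correct; both inductions are carried out properly and the scalar identity $\a^n(\a-\b)+\b(\a^n-\b^n)=\a^{n+1}-\b^{n+1}$ is exactly what closes each step. The paper omits the proof of this lemma entirely (marking it as routine), and your argument is the standard induction the authors evidently had in mind, so there is nothing further to compare.
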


\begin{lemm}\label{3.4} For any positive integer $a$, the following equalities
hold.

$(1)\quad E_{12}^a e_{2}=r^{3a}e_{2}E_{12}^a;$

$(2)\quad  e_1E_{1112}^a=r^{3a}E_{1112}^a e_1;$

$(3)\quad
e_1E_{112}^a=(r^2s)^{a}E_{112}^ae_1+r^{2(a-1)}[a]E_{112}^{a-1}E_{1112};$

$(4)\quad e_1E_{11212}^a=(rs)^{3a}E_{11212}^{a}e_1+r^{3a-2}\ze
[a]_3E_{11212}^{a-1}E_{112}^2;$

$(5)\quad
E_{11212}^ae_2=(r^2s)^{3a}e_2E_{11212}^{a}+r^{3(2a-1)}(r{-}s)(r^2{-}s^2)
[a]_3E_{12}^3E_{11212}^{a-1};$

$(6)\quad e_1 e_{2}^a=s^{3a}e_2^ae_1+[a]_3e_2^{a-1}E_{12}.$
\end{lemm}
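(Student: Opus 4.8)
The plan is to prove Lemma \ref{3.4} by induction on $a$, using Lemma \ref{3.3} as the engine for each of the six identities. The base case $a=1$ in every part is exactly one of the relations already established: (1) is (2.6), (2) is (2.7), (3) is Lemma \ref{3.1}(1) rewritten (noting $[1]=1$), (4) is Lemma \ref{3.1}(5), (5) is Lemma \ref{3.1}(2), and (6) is the relation $e_1e_2=E_{12}+s^3e_2e_1$ from (2.2) (again $[1]_3=1$). So the work is entirely in the inductive step, and the strategy is to identify, for each part, the triple $(x,y,z)$ and the auxiliary scalar so that Lemma \ref{3.3} applies, then simplify the resulting $r,s$-integer coefficient.

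For parts (1) and (2), the commutators are exact (no $z$ term): from (2.6) we get $E_{12}^ae_2 = (E_{12}^{a-1}e_2)E_{12}\cdot r^{-3}\cdot\ldots$, more cleanly just iterate $E_{12}e_2=r^3e_2E_{12}$ directly to get $E_{12}^ae_2=r^{3a}e_2E_{12}^a$, and similarly (2.7) gives (2) at once. For part (6), take $x=e_2$, $y=e_1$, $z=E_{12}$ with $yx=s^3xy+z$; by Lemma \ref{3.1}(1) one has $E_{12}e_2 = r^3e_2E_{12}$, i.e. $zx=r^3xz$, and since $e_1E_{12}$ is not proportional to $E_{12}e_1$ we cannot use part (2) of Lemma \ref{3.3} but part (1) applies with $\alpha=s^3$, $\beta=r^3$ — wait, here we need $z$ to commute past $x$ with scalar $\beta$, which holds, giving $yx^n = s^{3n}x^ny + \tfrac{s^{3n}-r^{3n}}{s^3-r^3}x^{n-1}z$. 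Checking the sign/normalization: $\tfrac{s^{3n}-r^{3n}}{s^3-r^3}=\tfrac{r^{3n}-s^{3n}}{r^3-s^3}=[n]_3$, which matches the claimed coefficient $[a]_3$. For (3), (4), (5) the same machinery applies: in (3) use Lemma \ref{3.1}(1) and Lemma \ref{3.1}(6) ($E_{1112}E_{112}=r^3E_{112}E_{1112}$) so that the error term $E_{1112}$ commutes with powers of $E_{112}$ with scalar $r^3$, and Lemma \ref{3.3}(1) produces $\tfrac{(r^2s)^a - r^{3a}}{r^2s - r^3}=\tfrac{r^{2a}(s^a-r^a)\cdot(-1)\ldots}{\ldots}$ which one simplifies to $r^{2(a-1)}[a]$; in (4) use Lemma \ref{3.1}(5) together with Lemma \ref{3.1}(9) ($E_{112}E_{11212}=r^3E_{11212}E_{112}$, hence $E_{112}^2$ commutes past $E_{11212}$ with scalar $r^6=(r^3)^2$... actually with scalar $r^6$, but note $(rs)^{3}$ is the main scalar and one needs the auxiliary scalar for how $E_{112}^2$ moves past $E_{11212}^{a-1}$) and extract $[a]_3$; in (5) use Lemma \ref{3.1}(2) with Lemma \ref{3.1}(8) ($E_{11212}E_{12}=r^3E_{12}E_{11212}$, so $E_{12}^3$ moves past $E_{11212}$ with scalar $r^9=(r^3)^3$) and Lemma \ref{3.3}(2) to land the $E_{11212}^{a-1}$ on the right with coefficient $[a]_3$.

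The main obstacle is purely bookkeeping: making sure in each of (3), (4), (5) that the correct auxiliary commutation relation (one of Lemma \ref{3.1}(6), (8), (9)) has exactly the scalar needed for Lemma \ref{3.3} to fire — i.e. that the "$z$" element (a power of $E_{12}$ or $E_{112}$) really does $q$-commute with the relevant root vector with a scalar \emph{different} from the main scalar $\alpha$, so the hypothesis $\beta\neq\alpha$ of Lemma \ref{3.3} is met — and then correctly simplifying ratios like $\tfrac{\alpha^a-\beta^a}{\alpha-\beta}$ into the advertised $r^{\text{power}}[a]$ or $r^{\text{power}}\zeta[a]_3$ form. The nonvanishing conditions $r^2\neq s^2$, $r^3\neq s^3$, etc., imposed in the definition of $U_{r,s}(G_2)$ guarantee these $r,s$-integers and the scalar $\zeta$ are well-defined and the $\beta\neq\alpha$ hypotheses hold. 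No genuinely new idea is required beyond Lemma \ref{3.1} and Lemma \ref{3.3}; I would present parts (1), (2), (6) in a line each and give the inductive step for (3) in full as the template, remarking that (4) and (5) are analogous with the obvious substitutions.
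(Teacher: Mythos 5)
Your proposal is correct and follows essentially the same route as the paper: the paper likewise proves (3), (4), (5), (6) by feeding the base relations (2.4), Lemma \ref{3.1}(5), Lemma \ref{3.1}(2), (2.2) together with the auxiliary $q$-commutations Lemma \ref{3.1}(6), (9), (8), (2.6) into Lemma \ref{3.3} with exactly the $(x,y,z,\alpha,\beta)$ you describe, and handles (1), (2) directly from (2.6), (2.7). The only blemishes are two mis-citations — the base relation for (3) is (2.4), not Lemma \ref{3.1}(1), and the relation $E_{12}e_2=r^3e_2E_{12}$ used in (6) is (2.6), not Lemma \ref{3.1}(1) — but the relations themselves are stated correctly in your parentheticals, so the argument stands.
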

\begin{proof}
(1) \& (2) follow from (2.6) and (2.7).
\smallskip

(3) follows from Lemma \ref{3.3} (1), where $x=E_{112}, y=e_1,
z=E_{1112}, \a =r^2s$, $\b=r^3$, and
$e_1E_{112}=E_{1112}+r^2sE_{112}e_1$,
$E_{1112}E_{112}=r^3E_{112}E_{1112}$.

\smallskip
(4) follows from Lemmas \ref{3.3} (1),  \ref{3.1} (5) \& (9), where
$x=E_{11212}, y=e_1, z=r\zeta E_{112}^2, \a =(rs)^3$, $\b=r^6$, and
$e_1E_{11212}=(rs)^3E_{11212}e_1+r\zeta E_{112}^2$ and
$E_{112}E_{11212}=r^3E_{11212}E_{112}$.

\smallskip
(5) follows from Lemmas \ref{3.1} (2) \& \ref{3.3} (2), where
$y=E_{11212}, x=e_2, z=r^3(r{-}s)(r^2{-}s^2) E_{12}^3, \a =(r^2s)^3$
and $\b=r^9$.

\smallskip
(6) follows from Lemma 3.3 (1), together with (2.2), (2.6).
\end{proof}

\begin{lemm}\label{3.5} For any positive integer $a$, the following equalities
hold.

\smallskip
$(1)\, E_{112}^ae_2=(rs)^{3a}e_2E_{112}^a+r^{3(a-2)}(r^2{-}s^2)
\Bigl(r^4s^{2(a-1)}[a]E_{12}^2E_{112}^2+r^3s^{a{-}2}[2]\left[a\atop
2\right]$

$\qquad\qquad\quad \times E_{12}E_{11212}
E_{112}+[2]\left[a\atop3\right]E_{11212}^2\Bigr)E_{112}^{a-3};$

\smallskip
$(2)\; e_1^ae_2=s^{3a}e_2e_1^a+s^{2(a-1)}[a]
E_{12}e_1^{a-1}{+}s^{a-2}\left[a\atop 2\right]E_{112}e_1^{a-2}{+}
{\left[a\atop 3\right]}E_{1112}e_1^{a-3},$

$\qquad\qquad\qquad\textit{for } \, a>4;$

\smallskip
$(3)\; e_1
E_{12}^a=(rs^2)^aE_{12}^ae_1+(rs)^{a-1}[a]E_{12}^{a-1}E_{112}
+r^{a-2}\left[a\atop 2\right]E_{12}^{a-2}E_{11212};$

\smallskip
$(4)\;
E_{1112}^ae_2=(rs^2)^{3a}e_2E_{1112}^a+(r^2s)(rs)^{3(a-1)}(r^3{-}s^3)[a]_3
E_{12}E_{112}E_{1112}^{a-1}$

$\qquad\quad\qquad\ +\,r\eta(rs)^{3(a-1)}[a]_3
E_{11212}E_{1112}^{a-1}+r^{3(a-1)}\ze(r{-}s)[2]_3\left[a\atop
2\right]_3E_{112}^3E_{1112}^{a-2}.$
\end{lemm}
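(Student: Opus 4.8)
The plan is to prove each of (1)--(4) by induction on $a$, independently of one another and without appeal to Lemma \ref{3.4}. The base case is $a=1$ throughout: then (1) is exactly Lemma \ref{3.1}(1), (4) is Lemma \ref{3.1}(3), while (2) and (3) are just the defining relations (2.2)--(2.4) and (2.7) rewritten as $e_1e_2=s^3e_2e_1+E_{12}$, $e_1E_{12}=rs^2E_{12}e_1+E_{112}$, $e_1E_{112}=r^2sE_{112}e_1+E_{1112}$, $e_1E_{1112}=r^3E_{1112}e_1$; one also wants $E_{112}E_{12}=r^2sE_{12}E_{112}+E_{11212}$, which is a rewriting of (2.5). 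For the small values of $a$ at which $a-3$ or $a-2$ becomes negative, the ``powers'' $E_{112}^{a-3}$, $E_{12}^{a-2}$, $E_{1112}^{a-2}$ are read formally and cancel against the matching positive powers inside the bracket, so that the genuine polynomial identities begin at $a=3$ in (1), at $a=2$ in (3) and (4), and at $a=1$ in (2).

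For the inductive step of (1) I would left-multiply the identity for $E_{112}^ae_2$ by $E_{112}$ and push every newly created $E_{112}$ back to the right using $E_{112}e_2=(rs)^3e_2E_{112}+r(r^2{-}s^2)E_{12}^2$ (Lemma \ref{3.1}(1)), $E_{112}E_{12}=r^2sE_{12}E_{112}+E_{11212}$, and $E_{112}E_{11212}=r^3E_{11212}E_{112}$ (Lemma \ref{3.1}(9)), together with $E_{11212}E_{12}=r^3E_{12}E_{11212}$ (Lemma \ref{3.1}(8)) to return the intermediate monomials $E_{112}E_{12}^2$, $E_{112}E_{12}E_{11212}$, $E_{112}E_{11212}^2$ to normal form. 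After collecting, each of the three monomials $E_{12}^2E_{112}^{a-1}$, $E_{12}E_{11212}E_{112}^{a-2}$, $E_{11212}^2E_{112}^{a-3}$ of the target picks up contributions from two source terms, and these add up precisely by virtue of the elementary identities $[a{+}1]=r[a]+s^a=s[a]+r^a$ and the $(r,s)$-Pascal rule $\left[a+1\atop k\right]=r^k\left[a\atop k\right]+s^{a+1-k}\left[a\atop k-1\right]$, together with their index-$i$ analogues $[a{+}1]_i=r^i[a]_i+s^{ia}=s^i[a]_i+r^{ia}$ and $\left[a+1\atop k\right]_i=r^{ik}\left[a\atop k\right]_i+s^{i(a+1-k)}\left[a\atop k-1\right]_i$.

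The remaining parts follow the same template. For (2) I would left-multiply $e_1^ae_2$ by $e_1$ and use the four rewritten defining relations above; for (3) I would right-multiply $e_1E_{12}^a$ by $E_{12}$ and use $e_1E_{12}=rs^2E_{12}e_1+E_{112}$, $E_{112}E_{12}=r^2sE_{12}E_{112}+E_{11212}$ and Lemma \ref{3.1}(8); for (4) I would left-multiply $E_{1112}^ae_2$ by $E_{1112}$ and use the relations $E_{1112}e_2$, $E_{1112}E_{12}$, $E_{1112}E_{112}$, $E_{1112}E_{11212}$ from Lemma \ref{3.1}(3),(4),(6),(7) (the last, iterated, also giving $E_{1112}E_{112}^3=r^9E_{112}^3E_{1112}$). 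In each case the coefficient bookkeeping closes under the same short list of $(r,s)$-integer and $(r,s)$-binomial identities recalled above; the simplification of the $E_{112}^3E_{1112}^{a-2}$-coefficient in (4) additionally uses $\ze(r{+}s)=r^3{-}s^3$, $[2]_3\left[a\atop 2\right]_3=[a]_3[a{-}1]_3$, and $\eta=r^2{-}s^2{-}rs$.

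The main obstacle is purely computational and is concentrated in parts (1) and (4): there the correction term is a sum of three, respectively four, normally ordered monomials in $E_{12},E_{112},E_{1112},E_{11212}$, and left-multiplying by the leading root vector splits each of them into two normally ordered pieces, so that essentially every coefficient of the target is a two-term sum that must be reassembled through the Pascal-type identities; keeping the powers of $r$ and $s$ straight throughout, and correctly treating the formal negative exponents for the smallest values of $a$, is where any slip would occur. Conceptually, however, once the case $a=1$ and the commutation relations of Lemma \ref{3.1} are in hand, each of the four inductions is routine.
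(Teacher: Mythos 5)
Your proposal is correct and follows essentially the same route as the paper: induction on $a$ for each part, with base cases reducing to Lemma \ref{3.1}(1),(3) and the defining relations, left-multiplying by the leading root vector ($E_{112}$, $e_1$, $E_{1112}$) in (1), (2), (4) and right-multiplying by $E_{12}$ in (3), then normalizing via the commutation relations of Lemma \ref{3.1} and recombining coefficients through the $(r,s)$-Pascal identities. The only cosmetic difference is that the paper anchors the inductions at $a=3$ or $4$ (resp. $a=2$) by direct verification rather than reading the small cases through formally cancelling negative exponents.
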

\begin{proof} (1): Use induction on $a$. If $a=1$, this is
Lemma \ref{3.1} (1). Using
$E_{112}E_{12}^2=(r^2s)^2E_{12}^2E_{112}+r^2[2]E_{12}E_{11212}$,
(2.5) and Lemma \ref{3.1} (9), we can prove (1) to be true for
$a=2,3,4$. Suppose that (1) is true for all $a\geq 4$, we obtain
\begin{equation*}
\begin{split}
E_{112}^{a+1}e_2 &=(rs)^{3a}E_{112}e_2E_{112}^a+
r^{3(a-2)}(r^2{-}s^2)
\Bigl(r^4s^{2(a-1)}[a]E_{112}E_{12}^2E_{112}^2\\
&\quad +\, r^3s^{a{-}2}[2]\left[a\atop
2\right]E_{112}E_{12}E_{11212}
E_{112}+[2]\left[a\atop3\right]E_{112}E_{11212}^2\Bigr)E_{112}^{a-3}
\\
&=(rs)^{3(a+1)}e_2E_{112}^{a+1}+r(rs)^{3a}(r^2{-}s^2)E_{12}^2E_{112}^a\\
&\quad
+\,r^{3(a-2)}(r^2{-}s^2)\Bigl(r^8s^{2a}[a]E_{12}^2E_{112}^3+r^6s^{2a-2}
[2][a]E_{12}E_{11212}E_{112}^2\\
& \quad +\,r^3s^{a-2}[2]\left[a\atop 2\right]E_{11212}^2E_{112}
+r^8s^{a-1}[2]\left[a\atop 2\right]E_{12}E_{11212} E_{112}^2\\&\quad
+\,r^6[2]\left[a\atop 3\right]E_{11212}^2E_{112}\Bigr)E_{112}^{a-3}\\
&=(rs)^{3(a+1)}e_2E_{112}^{a+1}+ r^{3(a-1)}(r^2{-}s^2)\Bigl(
r^4s^{2a}[a{+}1]E_{12}^2E_{112}^2
\\
& \quad +\,r^3s^{a-1}[2]\left[a{+}1\atop 2\right]E_{12}E_{11212}
E_{112}+[2]\left[a{+}1\atop 3\right] E_{11212}^2\Bigr)E_{112}^{a-2}
\end{split}
\end{equation*}
by the induction hypothesis.

(2): We use induction on $a$. If $a=4$, using the defining relations
(2.2), (2.3), (2.4), \& (2.7),
and a simple computation, we have
$$e_1^4e_2=[4]E_{1112}e_1+s^2\left[4\atop 2\right]E_{112}e_1^2+s^6[4]E_{12}e_1^3+s^{12}e_2e_1^4.$$
Furthermore, using the induction hypothesis and (2.7), we obtain
\begin{equation*}
\begin{split}
e_1^{a+1}e_2 &= \left[a{+}1\atop
3\right]E_{1112}e_1^{a-2}+s^{a-1}\left[a{+}1\atop
2\right]E_{112}e_1^{a-1} \\
&\qquad+\,s^{2a}[a{+}1] E_{12}e_1^{a}+s^{3(a+1)}e_2e_1^{a+1}.
\end{split}
\end{equation*}

(3): We use induction on $a$. If $a=2$, using (2.3), we have
$$
e_1E_{12}^2=E_{11212}+rs[2]E_{12}E_{112}+(rs^2)^2E_{12}^2e_1.
$$
Suppose that (3) is true for $a\ge 2$, using the induction
hypothesis, (2.5) and Lemma \ref{3.1} (8), we obtain
\begin{equation*}
\begin{split}
e_1E_{12}^{a+1} &
=(rs^2)^aE_{12}^ae_1E_{12}+(rs)^{a-1}[a]E_{12}^{a-1}E_{112}E_{12}
+r^{a-2}\left[a\atop 2\right]E_{12}^{a-2}E_{11212}E_{12}\\
&= (rs^2)^aE_{12}^aE_{112}+(rs^2)^{a+1}E_{12}^{a+1}e_1
+(rs)^{a-1}[a]E_{12}^{a-1}E_{11212}\\
& \quad+r^{a+1}s^{a}[a]E_{12}^aE_{112}
+r^{a+1}\left[a\atop 2\right]E_{12}^{a-1}E_{11212}\\
&=(rs^2)^{a+1}E_{12}^{a+1}e_1
+(rs)^a[a{+}1]E_{12}^{a}E_{112}+r^{a-1}\left[a{+}1\atop
2\right]E_{12}^{a-1}E_{11212}.
\end{split}
\end{equation*}

(4): We use induction on $a$. If $a=1$, this is the relation in
Lemma \ref{3.1} (3).

Suppose that (4) is true  for all $a\ge 1$,
using the induction hypothesis and Lemma \ref{3.1} (3), (4), (6) \&
(7), we obtain
\begin{equation*}
\begin{split}
E_{1112}^{a+1}e_2&=
(rs^2)^{3a}\Bigl(E_{1112}e_2\Bigr)E_{1112}^a\\
&\quad+\,(r^2s)(rs)^{3(a-1)}(r^3{-}s^3)[a]_3
\Bigl(E_{1112}E_{12}\Bigr)E_{112}E_{1112}^{a-1}\\
&\quad+\,r\eta(rs)^{3(a-1)}[a]_3
\Bigl(E_{1112}E_{11212}\Bigr)E_{1112}^{a-1}\\
&\quad+\,r^{3(a-1)}\ze(r{-}s)[2]_3\left[a\atop
2\right]_3\Bigl(E_{1112}E_{112}^3\Bigr)E_{1112}^{a-2}\\
&=(rs^2)^{3a}\Bigl((rs^2)^3e_2E_{1112}+(r^2s)(r^3{-}s^3)E_{12}E_{112}+r\eta
E_{11212}\Bigr)E_{1112}^a\\
&\quad+\,(r^2s)(rs)^{3(a-1)}(r^3{-}s^3)[a]_3
\Bigl((rs)^3E_{12}E_{1112}+r\zeta E_{112}^2\Bigr)E_{112}E_{1112}^{a-1}\\
&\quad+\,r\eta(rs)^{3(a-1)}[a]_3
\Bigl((r^2s)^3E_{11212}E_{1112}+r^3\zeta(r{-}s)E_{112}^3\Bigr)E_{1112}^{a-1}\\
&\quad+\,r^{3a+6}\ze(r{-}s)[2]_3\left[a\atop
2\right]_3E_{112}^3E_{1112}^{a-1}\\
&
=(rs^2)^{3(a+1)}e_2E_{1112}^{a+1}+(r^2s)(rs)^{3a}(r^3{-}s^3)[a{+}1]_3
E_{12}E_{112}E_{1112}^a\\
&\quad+\,r\eta(rs)^{3a}[a{+}1]_3
E_{11212}E_{1112}^a+r^{3a}\ze(r{-}s)[2]_3\left[a{+}1\atop
2\right]_3E_{112}^3E_{1112}^{a-1}.
\end{split}
\end{equation*}

This completes the proof.
\end{proof}

\begin{lemm}\label{3.6}
\ For any positive integer $a$, the following equalities hold.

$(1)\quad e_k^af_k=f_ke_k^a+[a]_k\ e_k^{a-1}\frac{s_k^{-a+1}\omega_k
-r_k^{-a+1}\omega_k'}{r_k-s_k},\quad 1\leq k \leq 2;$

$(2)\quad E_{12}^af_{1}=f_{1}E_{12}^a
-r^{2(a-1)}[3][a]e_2E_{12}^{a-1}\om_1';$

$(3)\quad E_{112}^af_{1}=f_{1}E_{112}^a
-(rs)^{a-1}[2]^2[a]E_{12}E_{112}^{a-1}\omega_{1}'
-r^{a-2}[2]^2\left[a\atop2\right]E_{11212}E_{112}^{a-2}\omega_{1}';$

$(4)\quad E_{1112}^af_{1}=f_{1}E_{1112}^a-[3]{[a]_3}
E_{112}E_{1112}^{a-1}\omega_{1}';$

$(5)\quad
E_{11212}^af_{1}=f_{1}E_{11212}^a-r^{3a-2}(r^2{-}s^2)[3]{[a]_3}
E_{12}^2E_{11212}^{a-1}\omega_{1}'.$
\end{lemm}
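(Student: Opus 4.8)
The plan is to prove each identity (1)--(5) of Lemma~\ref{3.6} by induction on the positive integer $a$, reducing in each case to the $a=1$ base case together with the commutation relations among quantum root vectors established in Lemma~\ref{3.1} and the collection formulas of Lemmas~\ref{3.3}--\ref{3.5}. For $(1)$, the base case $e_kf_k=f_ke_k+\frac{\om_k-\om_k'}{r_k-s_k}$ is just $(G4)$; passing from $a$ to $a{+}1$ requires commuting one more $e_k$ past $f_k$ (again by $(G4)$) and then past each of $\om_k,\om_k'$ using the Hopf-algebra relations $(G2)$, $(G3)$, which produce the scalars $s_k^{-a+1}$ and $r_k^{-a+1}$; summing the resulting geometric-type terms gives $[a{+}1]_k$ as required. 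This is the routine prototype for the remaining parts.

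For $(2)$--$(5)$, I would first pin down the base cases $a=1$, i.e. the commutators $[E_\beta,f_1]$ for $\beta\in\{\a_1+\a_2,\,2\a_1+\a_2,\,3\a_1+\a_2,\,3\a_1+2\a_2\}$. Each follows by expanding $E_\beta$ through its defining formula $(2.3)$--$(2.5)$, moving $f_1$ to the left using $(G4)$ (only $[e_1,f_1]$ is nonzero, yielding a term with $\frac{\om_1-\om_1'}{r_1-s_1}$), and then commuting the resulting $\om_1^{\pm1},\om_1'^{\pm1}$ back through the $e_i$'s via $(G2)$--$(G3)$; the $\om_1$-part will collapse because $e_2$ commutes appropriately, leaving only the $\om_1'$-part displayed in the lemma, with the numerical coefficients $[3]$, $[2]^2$, etc., emerging from the $r,s$-scalars. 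Once the base cases are in hand, the inductive step for, say, $(3)$ writes $E_{112}^{a+1}f_1 = E_{112}(E_{112}^af_1)$, substitutes the induction hypothesis, and then needs to move $E_{112}$ past $f_1$ (base case $a=1$ of $(3)$), past $\om_1'$ (a scalar via $(G3)$), and past $E_{12}$ and $E_{11212}$ (Lemma~\ref{3.1}(4),(8),(9)); collecting terms and using the Pascal-type recursions $(rs)^{a-1}[a]+r^{a-1}[a]\cdot(\text{scalar}) \to (rs)^a[a{+}1]$ and $\left[a\atop2\right]\to\left[a+1\atop2\right]$ for the $r,s$-binomials finishes the induction. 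Parts $(2)$, $(4)$, $(5)$ proceed identically, using instead Lemma~\ref{3.4}(1), Lemma~\ref{3.1}(6), and Lemma~\ref{3.1}(2) respectively to handle the relevant pass-through relations, together with the $[a]_3$-analogues of the binomial identities.

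The main obstacle is purely bookkeeping rather than conceptual: in $(3)$ and $(5)$ the right-hand side carries \emph{two} correction terms (one with $E_{12}E_{112}^{a-1}$ or $E_{12}^2E_{11212}^{a-1}$ and one with a binomial coefficient), and in the inductive step these interact — commuting $E_{112}$ past the first correction term of the hypothesis generates a contribution to the second correction term of the conclusion. Keeping the powers of $r,s$ aligned so that the geometric sums close up into the stated $r,s$-integers and $r,s$-binomials (and their degree-$3$ variants $[a]_3$, $\left[a\atop2\right]_3$) is where care is needed; the key technical inputs are exactly the collection lemmas already proved, e.g. Lemma~\ref{3.5}(1) and Lemma~\ref{3.4}(3)--(5), which were assembled for precisely this purpose. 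No new structural idea beyond the convex PBW-type relations of Lemma~\ref{3.1} is required.
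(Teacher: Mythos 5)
Your proposal is correct and follows essentially the same route as the paper: establish the $a=1$ commutators $[E_\beta,f_1]$ directly from $(G2)$--$(G4)$ (with the $\omega_1$-part cancelling exactly as you describe), then induct on $a$ --- the paper packages the single-correction-term cases (2), (4), (5) via Lemma~\ref{3.3}(2) and carries out the explicit induction only for (3), which is the one case genuinely having two correction terms. Two cosmetic slips that do not affect the argument: part (5) in fact has only \emph{one} correction term, and the pass-through relation it needs is Lemma~\ref{3.1}(8) (namely $E_{11212}E_{12}=r^3E_{12}E_{11212}$) rather than Lemma~\ref{3.1}(2).
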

\begin{proof}
(1) can be easily verified by induction on $a$.

(2), (4) \& (5) follow from Lemma \ref{3.3} (2), and the following
\begin{gather*}
E_{12}f_1=f_1E_{12}-[3]e_2\om_1',\\
E_{112}f_1=f_1E_{112}-[2]^2E_{12}\om_1',\\
E_{1112}f_1=f_1E_{1112}-[3]E_{112}\om_1',\\
E_{11212}f_1=f_1E_{11212}-r(r^2{-}s^2)[3]E_{12}^2\om_1'.
\end{gather*}

(3) can be verified by induction on $a$.
\begin{equation*}
\begin{split}
E_{112}^{a+1}f_1&=\Bigl(E_{112}f_{1}\Bigr)E_{112}^a
-(rs)^{a-1}[2]^2[a]\Bigl(E_{112}E_{12}\Bigr)E_{112}^{a-1}\omega_{1}'\\
&\quad-\,r^{a-2}[2]^2\left[a\atop2\right]\Bigl(E_{112}E_{11212}\Bigr)E_{112}^{a-2}\omega_{1}'\\
&=f_{1}E_{112}^{a+1} -(rs)^a[2]^2[a{+}1]E_{12}E_{112}^a\omega_{1}'
-r^{a-1}[2]^2\left[a{+}1\atop2\right]E_{11212}E_{112}^{a-1}\omega_{1}'.
\end{split}
\end{equation*}

This completes the proof.
\end{proof}

\begin{lemm}\label{3.7}
\ For any positive integer $a$, the following equalities hold.

$(1)\; E_{12}^af_{2}=f_{2}E_{12}^a+\om_2E_{12}^{a-3}\Bigl
(s^{2(a-1)}E_{12}^{2}e_1+s^{a-2}\left[a\atop 2\right]
E_{12}E_{112}+\left[a\atop3\right]E_{11212}\Bigr);$

$(2)\;
E_{112}^af_{2}{=}f_{2}E_{112}^a+r^{3(a-2)}(r^2{-}s^2)\om_2E_{112}^{a-3}
\Bigl([2]\left[a\atop3\right]E_{1112}^2
+r^{3}s^{a-2}[2]\left[a\atop2\right]E_{112}\times$

$\qquad\qquad\qquad \times
E_{1112}e_1+[a]r^{4}s^{2(a-1)}E_{112}^{2}e_1^2\Bigr);$

$(3)\;
E_{1112}^af_{2}=f_{2}E_{1112}^a+r^{-3(a-2)}(r^2{-}s^2)(r{-}s)[a]_3\om_2e_1^3E_{1112}^{a-1};$

$(4)\;
E_{11212}^af_{2}=f_{2}E_{11212}^a+r(rs)^{3(a-1)}[a]_3\om_2E_{11212}^{a-1}\Bigr(\eta
E_{1112}+rs(r^3{-}s^3)E_{112}e_1\Bigl)$

$\qquad\qquad\qquad +\,r^{3(a-1)}\ze
(r{-}s)[2]_3\left[a\atop2\right]_3\om_2E_{11212}^{a-2}E_{112}^3.$
\end{lemm}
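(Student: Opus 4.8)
The plan is to prove each of the four identities $(1)$--$(4)$ by induction on $a$, in the spirit of Lemma~\ref{3.6}. The structural difference from that lemma is that here the error terms on the right-hand sides are genuine sums of several monomials in the quantum root vectors rather than single monomials, so Lemma~\ref{3.3} cannot be applied directly for the passage from $a$ to $a+1$; a full induction is required in every case.

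\emph{The base cases.} From $(G4)$, the relation $[\,e_1,f_2\,]=0$, and the $\om$-action relations $(G2)$--$(G3)$ (which in particular say that each $e_i$ and each quantum root vector $E_\a$ is a weight vector, hence the $\om_i^{\pm1},(\om_i')^{\pm1}$ act on it by a scalar), one first gets $E_{12}f_2=f_2E_{12}+\om_2e_1$. Feeding this successively into the defining expressions $(2.3)$, $(2.4)$, $(2.5)$ of $E_{112}$, $E_{1112}$, $E_{11212}$, and keeping track of the factor $r^{\pm3}$ produced whenever $\om_2$ is moved past $e_1$, yields
\[ E_{112}f_2=f_2E_{112}+r(r^2{-}s^2)\om_2e_1^2,\qquad E_{1112}f_2=f_2E_{1112}+r^3(r^2{-}s^2)(r{-}s)\om_2e_1^3, \]
\[ E_{11212}f_2=f_2E_{11212}+r\,\om_2\bigl(\eta\,E_{1112}+rs(r^3{-}s^3)E_{112}e_1\bigr), \]
which are precisely the $a=1$ instances of $(1)$--$(4)$. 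As in Lemma~\ref{3.5}, one should in addition verify $(1)$, $(2)$, $(4)$ directly for the first few values of $a$ at which the coefficients $\left[a\atop2\right]$, $\left[a\atop3\right]$, $\left[a\atop2\right]_3$ first become nonzero, using the relations of Lemma~\ref{3.1} among the $E_\a$'s.

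\emph{The inductive step.} Assuming the identity for $a$, write $E_\bullet^{a+1}f_2=E_\bullet\cdot(E_\bullet^{a}f_2)$, substitute the induction hypothesis, and move the leading $E_\bullet$ to the right through each summand. The commutations that occur --- among $e_1$, $e_2$ and the four quantum root vectors --- are exactly those supplied by Lemma~\ref{3.1}, Lemma~\ref{3.4} and Lemma~\ref{3.5}; for example the step for $(1)$ uses $(2.3)$, Lemma~\ref{3.1}$(8)$ and Lemma~\ref{3.4}, while the step for $(4)$ uses Lemma~\ref{3.1}$(2),(3),(6),(7)$ together with Lemma~\ref{3.5}. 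Since $\om_2$ acts on each $E_\a$ by an eigen-scalar, it may be slid to the left throughout. Finally the coefficients are reassembled by the $(r,s)$-Pascal recursions $[a{+}1]=r^{a}+s[a]$ (and symmetrically $[a{+}1]=s^{a}+r[a]$) and the corresponding identities for $\left[a\atop k\right]$ and their base-$3$ analogues --- precisely the recombination performed at the end of the computations in Lemma~\ref{3.5} and in Lemma~\ref{3.6}$(3)$.

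\emph{The main obstacle.} The entire difficulty is the bookkeeping in the inductive steps for $(1)$, $(2)$, $(4)$: each error term is a sum of up to three monomials in $\{E_{12},E_{112},E_{1112},E_{11212},e_1\}$, and commuting $E_\bullet$ past each of them produces several new terms whose scalar factors (powers of $r$, and the quantities $\xi$, $\eta$, $\ze$) and whose $(r,s)$-binomial coefficients must be matched exactly against the asserted closed forms. Part $(2)$ is the most laborious, since its right-hand side simultaneously carries $E_{1112}^2$, $E_{112}E_{1112}e_1$ and $E_{112}^2e_1^2$ with coefficients $[2]\left[a\atop3\right]$, $[2]\left[a\atop2\right]$ and $[a]$. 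Beyond this there is no conceptual content; the point is that these relations, like their $f_1$-counterparts in Lemma~\ref{3.6}, feed into the verification of the central elements in Theorem~\ref{3.2} and later into the computation of the integrals in Section~6.
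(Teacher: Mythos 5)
Your strategy --- deriving the $a=1$ cases from $E_{12}f_2=f_2E_{12}+\om_2e_1$ fed through the definitions $(2.3)$--$(2.5)$, checking the first few values of $a$ where the $(r,s)$-binomial coefficients switch on, and then inducting on $a$ by writing $E_\bullet^{a+1}f_2=E_\bullet(E_\bullet^af_2)$ and commuting through via Lemmas~\ref{3.1}, \ref{3.4}, \ref{3.5} --- is exactly the paper's proof, and your stated base cases agree with the paper's. The only (harmless) divergence is part $(3)$: since its error term is the single monomial $\om_2e_1^3$ and $e_1E_{1112}=r^3E_{1112}e_1$, the paper dispatches it directly with Lemma~\ref{3.3}$(2)$ rather than a fresh induction.
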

\begin{proof}
(1): Since $E_{12}f_2=f_2E_{12}+\om_2e_1$, it is easy to check the
cases when $a=2,3,4$. Using the induction hypothesis, and by Lemma
\ref{3.5} (3), we have
\begin{gather*}
\begin{split}
E_{12}^{a{+}1}f_2&=E_{12}f_{2}E_{12}^a+\om_2E_{12}^{a-2}\Bigl(s^{2a+1}[a]E_{12}^{2}e_1
{+}s^{a+1}\left[a\atop2\right]
E_{12}E_{112}{+}s^3\left[a\atop3\right]E_{11212}\Bigr)
\\
&=f_{2}E_{12}^{a+1}+r^{a-2}\om_2\Bigl(r^2s^{2a}E_{12}^ae_1{+}rs^{a-1}[a]E_{12}^{a-1}E_{112}
{+}\left[a\atop2\right]E_{12}^{a-2}E_{11212}\Bigr)\\& \quad
+\om_2E_{12}^{a-2}\Bigl(s^{2a+1}[a]E_{12}^{2}e_1
+s^{a+1}\left[a\atop2\right]
E_{12}E_{112}+s^3\left[a\atop3\right]E_{11212}\Bigr)\\
&=f_{2}E_{12}^{a{+}1}+\om_2E_{12}^{a-2}\Bigl(s^{2a}[a{+}1]E_{12}^{2}e_1{+}s^{a{-}1}\left[a{+}1\atop2\right]
E_{12}E_{112}\\
&\quad+\left[a{+}1\atop3\right]E_{11212}\Bigr).
\end{split}
\end{gather*}

(2): From (1), it is easy to calculate
$E_{112}f_2=f_2E_{112}+r(r^2{-}s^2)\om_2e_1^2$. The relation below
is obtained by Lemma \ref{3.4} (3),
$$
e_1^2E_{112}^a=r^{4a-6}E_{112}^{a-2}\left([2]\left[a\atop2\right]E_{1112}^2
+r^{4}s^{a-1}[2][a]E_{112}E_{1112}e_1+r^{6}s^{2a}E_{112}^2e_1^2\right).$$

For $a\ge 1$, we have by induction on $a$,
\begin{gather*}
\begin{split}
E_{112}^{a+1}f_{2}&=\Bigl(E_{112}f_{2}\Bigr)E_{112}^a+r^{3(a-1)}s^3(r^2{-}s^2)\om_2E_{112}^{a-2}
\Bigl([2]\left[a\atop3\right]E_{1112}^2\\& \quad
+r^{3}s^{a-2}[2]\left[a\atop2\right]
E_{112}E_{1112}e_1+r^{4}s^{2(a-1)}[a]E_{112}^{2}e_1^2\Bigr)\\
&=f_{2}E_{112}^{a+1}+r(r^2{-}s^2)r^{4a-6}\om_2E_{112}^{a-2}\Bigl(
[2]\left[a\atop2\right]E_{1112}^2
\\& \quad +r^{4}s^{a-1}[2][a]E_{112}E_{1112}e_1+r^{6}s^{2a}E_{112}^2e_1^2\Bigr)\\&
\quad +r^{3(a-1)}s^3(r^2{-}s^2)\om_2E_{112}^{a-2}
\Bigl([2]\left[a\atop3\right]E_{1112}^2\\
& \quad
+r^{3}s^{a-2}[2]\left[a\atop2\right]
E_{112}E_{1112}e_1+r^{4}s^{2(a-1)}[a]E_{112}^{2}e_1^2\Bigr)\\
&=f_{2}E_{112}^{a+1}+r^{3(a-1)}(r^2{-}s^2)\om_2E_{112}^{a-2}
\Bigl([2]\left[a{+}1\atop3\right]E_{1112}^2\\& \quad
+r^{3}s^{a-1}[2]\left[a{+}1\atop2\right]
E_{112}E_{1112}e_1+r^{4}s^{2a}[a{+}1]E_{112}^{2}e_1^2\Bigr).
\end{split}
\end{gather*}

(3): We consider the case for $a=1$. It follows from (2) that
$$
E_{1112}f_2=f_2E_{1112}+r^3(r^2{-}s^2)(r{-}s)\om_2e_1^3.\eqno (*)
$$
So by  $(*)$ and Lemma \ref{3.3} (2), we can easily get (3).

(4): If $a=1$, we have $$
E_{11212}f_2=f_2E_{11212}+r\eta\om_2E_{1112}+r^2s(r^3{-}s^3)\om_2E_{112}e_1$$
by using (1) and (2). The following relation holds by Lemma 3.3 (1):
$$E_{1112}E_{11212}^a=(r^2s)^{3a}E_{11212}^aE_{1112}
+r^{3(2a-1)}\ze (r{-}s)[a]_3E_{11212}^{a-1}E_{112}^3. \eqno (**)$$
So (4) can be derived easily  from $(**)$ and Lemma \ref{3.4} (4) by
induction on $a$.

The proof is complete.
\end{proof}

\textbf{Proof of Theorem \ref{3.2}.} We know from Lemmas \ref{3.4},
\ref{3.5}, \ref{3.6}, \ref{3.7} that the elements $E_{1}^{\ell}$,
$E_{1112}^{\ell}$, $E_{112}^{\ell}$, $E_{11212}^{\ell}$, $
E_{12}^{\ell}$, $ E_{2}^{\ell}$ are central in $U_{r,s}(G_2)$.
Applying $\tau$ to $E_{\a}\ (\a \in \Phi)$, we see that
$F_{1}^{\ell}$, $F_{1112}^{\ell}$, $F_{112}^{\ell}$,
$F_{11212}^{\ell}$, $ F_{12}^{\ell}$, $ F_{2}^{\ell}$ are also
central. It is easy to see that $\omega_k^{\ell}-1,\
(\omega_k')^{\ell}-1 \ (k=1, 2)$ are central too. $\hfill\qed$

\begin{remark}\label{3.8}
If $\ell=3\ell'$, then the elements $E_{1112}^{\ell'},
F_{1112}^{\ell'}$, $\omega_k^{\ell'}{-}1$ and
$(\omega_k')^{\ell'}{-}1$ $(k=1, 2)$ are central in $U_{r,s}(G_2)$.
\end{remark}

\subsection{Restricted two-parameter quantum groups}

From now on we will assume that $\ell$ is coprime to $3$.

\begin{defi}\label{3.9} The \textit{restricted two-parameter quantum group} is the quotient
$$\mathfrak{u}_{r,s}(G_2):=U_{r,s}(G_2)/\mathcal{I},$$
where   $\mathcal{I}$ denotes the ideal of $U_{r,s}(G_2)$ generated
by all elements $E_{\a}^{\ell}$, $F_{\a}^{\ell}$ $(\a \in \Phi^+)$
and $\omega_k^{\ell}-1,\ (\omega_k')^{\ell}-1 \ (1\leq k\leq 2 )$.
\end{defi}

By Theorem \ref{2.4} and Corollary \ref{2.5},
$\mathfrak{u}_{r,s}(G_2)$ is an algebra of dimension $\ell^{16}$
with linear basis
$$
E_{2}^{c_{1}}E_{12}^{c_{2}}E_{11212}^{c_3}E_{112}^{c_4}E_{1112}^{c_5}E_{1}^{c_6}
\omega_1^{b_1}\omega_{2}^{b_{2}}
(\omega'_1)^{b'_1}(\omega'_{2})^{b'_{2}}F_{1}^{c'_{1}}F_{1112}^{c'_{2}}F_{112}^{c'_3}F_{11212}^{c'_4}F_{12}^{c'_5}
F_{2}^{c'_6}\leqno(3.1)
$$
where all powers range between $0$ and $\ell-1$.

The remainder of this section is devoted to proving that
$\mathfrak{u}_{r,s}(G_2)$ is a finite-dimensional Hopf algebra.

First note that the generators of $\mathcal{I}$ are contained in the
kernel of the counit $\varepsilon$, and so $\mathcal{I}$ is as well.
Because the coproduct $\Delta$ is an algebra homomorphism, and the
antipode $S$ is an algebra antihomomorphism, it suffices to show
that $\Delta(x)\in\mathcal{I}\ot U+U\ot\mathcal{I}$ and $S(x)\in
\mathcal{I}$ for each generator $x\in\mathcal{I}$. To accomplish
this, we rely on the following computations:
\begin{gather*}
\begin{split}
\Delta(\omega_k^{\ell}-1)&=\om_k^\ell\ot\om_k^\ell-1\ot
1\\
&=(\omega_k^{\ell}-1)\ot\om_k^\ell+1\ot(\omega_k^{\ell}-1)\in\mathcal{I}\ot
U+U\ot\mathcal{I},
\end{split}
\end{gather*}
and
$S(\omega_k^{\ell}-1)=-\om_k^{-\ell}(\omega_k^{\ell}-1)\in\mathcal{I}.$
The argument for $(\omega_k')^{\ell}-1$ is similar. In determining
$\Delta(x)$ for each generator $x \in \mathcal{I}$, we adopt the
notation below
$$\om_{12}=\om_1\om_2, \quad\om_{112}=\om_1^2\om_2,\quad \om_{1112}=\om_1^3\om_2, \quad \om_{11212}=\om_1^3\om_2^2.$$

Some simple computations give rise to the following
\begin{lemm}\label{3.10}
$(1)\ \Delta (E_{12})=E_{12}\otimes 1 + \omega_{12}\otimes E_{12}+
(r^3{-}s^3)\omega_{2}e_1 \otimes e_2;$

$(2)\ \Delta (E_{112})=E_{112}\otimes 1 + \omega_{112}\otimes
E_{112}+ (r{+}s)(r^2{-}s^2)\omega_{12}e_1 \otimes
E_{12}+r(r^2{-}s^2)\times (r^3{-}s^3)\omega_{2}e_1^2 \otimes e_{2};$

$(3)\ \Delta (E_{1112})=E_{1112}\otimes 1 + \omega_{1112}\otimes
E_{1112}+(r^3{-}s^3)\omega_{112}e_1\otimes E_{112}+
r(r^2{-}s^2)\times (r^3{-}s^3)\omega_{12}e_1^2 \otimes
E_{12}+r^3(r{-}s)(r^2{-}s^2)(r^3{-}s^3)\omega_{2}e_1^3 \otimes
e_{2};$

$(4)\ \Delta (E_{11212})=E_{11212}\otimes 1 + \omega_{11212}\otimes
E_{11212} +r(r^2{-}s^2)(r^3{-}s^3)\omega_{12}^2e_1 \otimes E_{12}^2
+r(r^2{-}s^2)(r^3{-}s^3)^2\omega_{12}\om_{2}e_1^2 \otimes
E_{12}e_{2} +(r^3{-}s^3)\omega_{12}E_{112} \otimes E_{12}
+r^6(r{-}s)(r^2{-}s^2)\times (r^3{-}s^3)^2\omega_{2}^2e_1^3 \otimes
e_{2}^2
+r(r^3{-}s^3)\om_2((r^2{-}s^2{-}rs)E_{1112}+rs(r^3{-}s^3)E_{112}e_1)\ot
e_2.$\hfill\qed
\end{lemm}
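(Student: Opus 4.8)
The plan is to compute the four coproducts in the order listed, using only that $\Delta$ is an algebra homomorphism, the coproducts $\Delta(e_1)=e_1\ot1+\om_1\ot e_1$, $\Delta(e_2)=e_2\ot1+\om_2\ot e_2$, the inductive definitions $(2.2)$--$(2.5)$, and the conjugation scalars read off from $(G1)$--$(G3)$, such as $e_1\om_2=r^{3}\om_2e_1$, $e_2\om_1=s^{-3}\om_1e_2$, $e_1\om_{12}=r^{2}s\,\om_{12}e_1$, $e_2\om_{12}=r^{-3}\om_{12}e_2$, and the analogous ones for $\om_{112},\om_{1112},\om_{11212}$. In every product $\Delta(x)\Delta(y)$ below I will first move the group-like factor sitting in the left tensor slot to the far left (this is what makes the left slots come out as $\om_2e_1$, $\om_{12}e_1$, $\om_2e_1^2$, and so on), and then collect terms according to the weight of the right tensor factor. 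A convenient guiding fact, provable by induction along $(2.2)$--$(2.5)$, is that each $\Delta(E_\al)$ has the shape $E_\al\ot1+\om_\al\ot E_\al+(\text{terms whose right factor has strictly smaller height})$, which fixes the first two summands of every formula at once.

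For $(1)$ I would expand $\Delta(E_{12})=\Delta(e_1)\Delta(e_2)-s^{3}\Delta(e_2)\Delta(e_1)$ into eight monomials: the two with right factor $e_1$ cancel because $e_2\om_1=s^{-3}\om_1e_2$ exactly kills the coefficient $-s^{3}$; the two with right factor $e_2$ combine to $(r^{3}-s^{3})\om_2e_1\ot e_2$ since $e_1\om_2=r^{3}\om_2e_1$; the two diagonal ones collapse to $\om_{12}\ot(e_1e_2-s^{3}e_2e_1)=\om_{12}\ot E_{12}$; and the remaining two give $E_{12}\ot1$. This fixes the base case of the induction and the template for the rest.

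Next I would feed $(1)$ into $\Delta(E_{112})=\Delta(e_1)\Delta(E_{12})-rs^{2}\Delta(E_{12})\Delta(e_1)$, then $(2)$ into $\Delta(E_{1112})=\Delta(e_1)\Delta(E_{112})-r^{2}s\,\Delta(E_{112})\Delta(e_1)$, and finally $(1)$ and $(2)$ together into $\Delta(E_{11212})=\Delta(E_{112})\Delta(E_{12})-r^{2}s\,\Delta(E_{12})\Delta(E_{112})$. Each step is the same routine: multiply out, push the group-likes to the left, and recombine the cross-terms. The recombinations need only elementary rewritings already available, namely $e_1e_2=E_{12}+s^{3}e_2e_1$, $e_1E_{12}=E_{112}+rs^{2}E_{12}e_1$, $e_1E_{112}=E_{1112}+r^{2}sE_{112}e_1$ (the rearrangements of $(2.2)$--$(2.4)$), $E_{12}e_2=r^{3}e_2E_{12}$ (that is $(2.6)$), and Lemma \ref{3.1}(1) for $E_{112}e_2$; these are precisely what convert raw monomials such as $\om_2e_1\!\cdot\! e_1e_2$ or $\om_{112}\om_2e_1\!\cdot\!E_{112}e_2$ into the normalized summands displayed in $(2)$--$(4)$.

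The genuinely awkward part is $(4)$: $\Delta(E_{11212})$ has seven summands, and assembling it means tracking well over a dozen cross-terms through several conjugation scalars and the rewritings above, so the actual work is organizing everything by right-hand weight carefully enough that no sign or coefficient is lost. The subtlest single point is the last summand $r(r^{3}{-}s^{3})\om_2\bigl((r^{2}{-}s^{2}{-}rs)E_{1112}+rs(r^{3}{-}s^{3})E_{112}e_1\bigr)\ot e_2$: its two pieces come precisely from the four contributions with right factor exactly $e_2$, after using $(2.4)$ to replace $e_1E_{112}$ by $E_{1112}+r^{2}sE_{112}e_1$ and expanding $e_1^{2}E_{12}$ via $(2.3)$--$(2.4)$; the coefficients then collapse using $r(r^{2}{-}s^{2})(r^{3}{-}s^{3})-r^{2}s(r^{3}{-}s^{3})=r(r^{3}{-}s^{3})\eta$ together with $(r-s)(r^{2}+rs+s^{2})=r^{3}-s^{3}$, while the apparent $\ot E_{12}e_1^{2}$ contributions cancel outright. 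With $(4)$ settled the lemma follows.
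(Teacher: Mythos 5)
Your proposal is correct and coincides with what the paper does: the paper offers no written proof beyond ``some simple computations,'' and those computations are exactly your direct expansion of $\Delta$ along the inductive definitions $(2.2)$--$(2.5)$, normalizing with the group-like conjugation scalars and the rewritings $(2.2)$--$(2.4)$, $(2.6)$ and Lemma \ref{3.1}(1). Spot-checking the delicate coefficients confirms your account, e.g.\ $rs(r-s)+(r^3-s^3)=(r+s)(r^2-s^2)$ for the $\omega_{12}e_1\otimes E_{12}$ term in $(2)$ and $r(r^2-s^2)(r^3-s^3)-r^2s(r^3-s^3)=r(r^3-s^3)(r^2-s^2-rs)$ for the $E_{1112}$ piece in $(4)$.
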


\begin{lemm}\label{3.11}
 $\Delta(E_{\a}^\ell)\in\mathcal{I}\ot
U+U\ot\mathcal{I}$, for $\a \in \Phi^+$.
\end{lemm}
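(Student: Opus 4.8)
The plan is to prove $\Delta(E_\a^\ell)\in\mathcal I\ot U+U\ot\mathcal I$ for each of the six positive roots $\a\in\Phi^+$ by leveraging the coproduct formulas of Lemma~\ref{3.10} (together with the trivial cases $\Delta(E_1)=E_1\ot1+\om_1\ot E_1$ and $\Delta(E_2)=E_2\ot1+\om_2\ot E_2$). The uniform principle is this: for each quantum root vector $E_\a$, Lemma~\ref{3.10} exhibits $\Delta(E_\a)$ as a sum $\sum_k u_k\ot v_k$ where the tensorands $u_k,v_k$ are monomials (up to scalars) in the $e_i$, the lower quantum root vectors $E_\be$, and the group-likes $\om_{\be}$. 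I would raise this sum to the $\ell$-th power inside $U\ot U$, and then argue that after collecting terms, every resulting summand $u\ot v$ has either $u\in\mathcal I$ or $v\in\mathcal I$. The key arithmetic input making this work is that the scalars governing the commutation of the various tensorands are roots of unity whose order divides $\ell$, so that the $\ell$-th power of a two-term (or few-term) $q$-commuting expression collapses by the quantum binomial theorem: if $yx=\theta xy$ with $\theta^\ell=1$ and $\theta\neq1$, then $(x+y)^\ell=x^\ell+y^\ell$ in characteristic coprime to the relevant primes, since the intermediate quantum binomial coefficients $\binom{\ell}{k}_\theta$ vanish.

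Concretely, I would handle the roots in order of increasing complexity. For $\a=\a_1,\a_2$ there is nothing beyond $\Delta(\om_i)=\om_i\ot\om_i$ and the two-term formula, and $(E_i\ot1+\om_i\ot E_i)^\ell=E_i^\ell\ot1+\om_i^\ell\ot E_i^\ell$ follows from the $q$-binomial collapse once one checks that $(\om_i\ot E_i)(E_i\ot1)=\theta(E_i\ot1)(\om_i\ot E_i)$ for a suitable root of unity $\theta$ coming from relation $(G2)$; both terms $E_i^\ell\ot1$ and $\om_i^\ell\ot E_i^\ell$ lie in $\mathcal I\ot U+U\ot\mathcal I$ since $E_i^\ell\in\mathcal I$ (and $\om_i^\ell = 1 + (\om_i^\ell-1)$ with $\om_i^\ell-1\in\mathcal I$, but more simply $E_i^\ell\ot1\in\mathcal I\ot U$ and $\om_i^\ell\ot E_i^\ell\in U\ot\mathcal I$ directly). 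For $\a=\a_1+\a_2$ I would use Lemma~\ref{3.10}(1): writing $A=E_{12}\ot1$, $B=\om_{12}\ot E_{12}$, $C=(r^3-s^3)\om_2 e_1\ot e_2$, one computes the pairwise $q$-commutation relations among $A,B,C$ from the relations in $U$ and Proposition~2.3 (the skew-pairing), checks each scalar is an $\ell$-th root of unity $\neq1$, and expands $(A+B+C)^\ell$ by the multi-term quantum multinomial theorem; every surviving monomial is one of $A^\ell,B^\ell,C^\ell$, each of which visibly lands in $\mathcal I\ot U+U\ot\mathcal I$ because each of $E_{12}^\ell, e_1^\ell, e_2^\ell$ is in $\mathcal I$ (using Theorem~\ref{3.2} and the trivial $e_i^\ell=E_i^\ell\in\mathcal I$). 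The remaining cases $\a=2\a_1+\a_2,\ 3\a_1+\a_2,\ 3\a_1+2\a_2$ are the same strategy applied to the three-, five-, and seven-term formulas of Lemma~\ref{3.10}(2)--(4); here one must be somewhat careful because some tensorands (e.g. $\om_{112}e_1^2$, or $\om_{12}E_{112}$, or $E_{1112}$ mixed with $E_{112}e_1$) are themselves products, so one should first reduce to checking that each tensorand, to the $\ell$-th power, is in $\mathcal I$ — which follows from Theorem~\ref{3.2} plus the observation that $\ell$-th powers of the group-likes are central and $\equiv1$ mod $\mathcal I$, and that $\ell$ is coprime to $3$ so no spurious $[\ell/3]$-type obstructions arise.

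The main obstacle I anticipate is \emph{not} the binomial collapse itself but the bookkeeping of the $q$-commutation constants among the tensorands in the higher-rank cases, especially for $\Delta(E_{11212})$ which has seven terms. The tensorands there do not pairwise $q$-commute by a single scalar in the naive sense; rather, moving one past another produces \emph{lower} terms (just as in Lemma~\ref{3.1} the root vectors themselves only $q$-commute up to correction terms). So the honest argument is an induction: one orders the tensorands compatibly with the convex order, shows that the "correction" terms produced when reordering are themselves monomials all of whose relevant factors already appear, and then runs the quantum-multinomial collapse on the resulting PBW-ordered expansion of $(\sum_k u_k\ot v_k)^\ell$. In practice it is cleanest to invoke the general principle (essentially the Nichols-algebra / Kharchenko $q$-binomial lemma underlying the one-parameter small quantum group computations, cf.\ \cite{L1}) that in an $\ell$-restricted setting the $\ell$-th power of a skew-primitive-like element is primitive modulo the ideal generated by $\ell$-th powers; formally, one checks $\Delta(E_\a^\ell)\equiv E_\a^\ell\ot\om_\a^\ell+1\ot E_\a^\ell\pmod{\mathcal I\ot U+U\ot\mathcal I}$ by the same device used for $\om_k^\ell-1$ above, with the cross terms absorbed into $\mathcal I\ot U+U\ot\mathcal I$ termwise. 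I would present the $\a=\a_1+\a_2$ case in full as the template and then indicate that the remaining cases follow by the identical mechanism, citing Lemma~\ref{3.10} and Theorem~\ref{3.2}, since writing out the seven-term multinomial expansion verbatim would be unilluminating.
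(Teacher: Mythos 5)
Your proposal is correct and follows essentially the same route as the paper: both start from the explicit coproduct formulas of Lemma~\ref{3.10}, exploit the $q$-commutation of the tensorands together with the vanishing of quantum binomial coefficients at $\ell$th roots of unity to collapse $\Delta(E_\a)^\ell$ to a sum of $\ell$th-power monomials, and then observe that each surviving term lies in $\mathcal I\ot U+U\ot\mathcal I$ because the $\ell$th powers of the $e_i$ and the quantum root vectors generate $\mathcal I$. The only cosmetic difference is that the paper groups $\om_\a\ot E_\a$ against the sum of the remaining terms (a two-step binomial) rather than running a flat multinomial, and, like you, it works the $\a_1+\a_2$ case explicitly and dispatches the longer formulas with "in a similar way."
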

\begin{proof}
Note that
$$
\Delta(e_i^n)=\sum\limits_{j=0}^{n}\left(n\atop
j\right)_{r_is_i^{-1}} e_i^{n-j}\omega_i^{j}\ot e_i^{j}
$$
has a $r_is_i^{-1}$-binomial expansion
\footnote{ We set
$$
(n):=(n)_{rs^{-1}}:=\frac{r^{n}s^{-n}-1}{rs^{-1}-1},  \quad (n)!:=(n)(n-1)\cdots
(2)(1),\quad
 \left(n\atop m\right):=\frac{(n)!}{(m)!(n-m)!}.$$
By convention $(0)=0$ and $(0)!=1$.} since $(\omega_i\ot
e_i)(e_i\ot1)=r_is_i^{-1}(e_i\ot 1)(\omega_i\ot e_i)$. So we have
$\Delta(e_i^\ell)=e_i^\ell\ot 1+\om_i^\ell\ot e_i^\ell \in
\mathcal{I}\ot U+U\ot\mathcal{I}$, since $r$ and $s$ are primitive
$\ell$th root of unity, and $\left(\ell\atop
j\right)_{r_is_i^{-1}}=0$ for $0< j< \ell$.

Note that $\Delta(E_{12}^\ell)=(E_{12}\ot 1+(r^3-s^3)\omega_2e_1\ot
e_2)^\ell+\omega_{12}^\ell\ot E_{12}^\ell,$ because of
$$
(\omega_{12}\ot E_{12})(E_{12}\ot 1+(r^3{-}s^3)\omega_2e_1\ot
e_2)=rs^{-1}(E_{12}\ot 1+(r^3{-}s^3)\omega_2e_1\ot
e_2)(\omega_{12}\ot E_{12}).
$$
we get $\Delta(E_{12}^\ell)=E_{12}^\ell\ot
1+r^{\frac{3\ell(\ell-1)}{2}}(r^3{-}s^3)^\ell\om_2^\ell e_1^\ell \ot
e_2^\ell+\om_{12}^\ell\ot E_{12}^\ell \in \mathcal{I}\ot
U+U\ot\mathcal{I}$, as $r$, $s$ are primitive $\ell$th root of
unity, and $(E_{12}+\om_2
e_1)^\ell=E_{12}^\ell+r^{\frac{3\ell(\ell-1)}{2}}\om_2^\ell
e_1^\ell$.  In a similar way, we have $\Delta(E_{112}^\ell),
\Delta(E_{1112}^\ell), \Delta(E_{11212}^\ell)$ are in
$\mathcal{I}\ot U+U\ot\mathcal{I}$.
\end{proof}

Applying the antipode property to $\Delta(E_{\a}^{\ell}),\  \a \in
\Phi^+$, we can obtain $S(E_{\a}^\ell)\in \mathcal{I}\ot
U+U\ot\mathcal{I}$. Using $\tau$, we find that $\Delta(F_{\a}^\ell)
\in \mathcal{I}\otimes U+U\otimes \mathcal{I}$ for $\a \in \Phi^+$.
Hence, $\mathcal{I}$ is a Hopf ideal, and $\mathfrak{u}_{r,s}(G_2)$
is a finite-dimensional Hopf algebra. Then we have

\begin{theorem}\label{3.12}
$\mathcal{I}$ is a Hopf ideal of $U_{r,s}(G_2)$, so that
$\mathfrak{u}_{r,s}(G_2)$ is a finite-dimensional Hopf algebra.
\hfill\qed
\end{theorem}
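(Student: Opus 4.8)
The plan is to verify that $\mathcal{I}$ is a Hopf ideal by checking the three defining conditions on each generator: stability under the counit $\varepsilon$, under the antipode $S$, and the coideal condition $\Delta(x)\in\mathcal{I}\ot U+U\ot\mathcal{I}$. For the group-like generators $\omega_k^\ell-1$ and $(\omega_k')^\ell-1$ the computations are entirely elementary and already recorded in the excerpt: $\varepsilon$ kills them, $\Delta(\omega_k^\ell-1)=(\omega_k^\ell-1)\ot\omega_k^\ell+1\ot(\omega_k^\ell-1)$, and $S(\omega_k^\ell-1)=-\omega_k^{-\ell}(\omega_k^\ell-1)$; the same argument applies verbatim to the primed version. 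Since $\Delta$ is an algebra homomorphism, $S$ an algebra anti-homomorphism, and $\varepsilon$ an algebra homomorphism, it suffices to treat the algebra generators of $\mathcal{I}$ one at a time, so the remaining content is about the $\ell$-th powers $E_\a^\ell$ and $F_\a^\ell$.

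For these, the key step is Lemma \ref{3.11}, which has already shown $\Delta(E_\a^\ell)\in\mathcal{I}\ot U+U\ot\mathcal{I}$ for all $\a\in\Phi^+$; the mechanism there is to isolate, from the general coproduct formula in Lemma \ref{3.10}, a ``skew-primitive-like'' pair of tensor factors $a\ot b$ satisfying a $q$-commutation relation $(b'\text{-side})(a\text{-side})=\mu\,(a\text{-side})(b'\text{-side})$ with $\mu$ a root of unity, apply the quantum binomial theorem so that all intermediate binomial coefficients $\binom{\ell}{j}_\mu$ vanish, and observe that the surviving terms involve only $\ell$-th powers of $e_i$, $\omega_j$, and the $E_\a$'s themselves, all of which lie in $\mathcal{I}$ (together with a root-of-unity prefactor like $r^{3\ell(\ell-1)/2}$, which is harmless). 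I would then invoke the antipode axiom $\sum S(a_{(1)})a_{(2)}=\varepsilon(a)1$: writing $\Delta(E_\a^\ell)=E_\a^\ell\ot 1+(\text{group-like})\ot E_\a^\ell+(\text{terms in }\mathcal{I}\ot\mathcal{I})$, applying $(m\circ(S\ot\mathrm{id}))$ and using that $\mathcal{I}\ot\mathcal{I}\subseteq\mathcal{I}\ot U$ maps into $\mathcal{I}$, one solves for $S(E_\a^\ell)$ and finds it lies in $\mathcal{I}$. Finally, applying the $\mathbb{Q}$-anti-automorphism $\tau$ of $U_{r,s}(G_2)$, which sends $E_\a$ to $F_\a$ and intertwines the Hopf structure appropriately (swapping $\Delta$ with $\Delta^{\mathrm{op}}$ and fixing $\mathcal{I}$ setwise since $\tau(E_\a^\ell)=F_\a^\ell$ and $\tau$ permutes the $\omega$-generators), transports $\Delta(E_\a^\ell)\in\mathcal{I}\ot U+U\ot\mathcal{I}$ and $S(E_\a^\ell)\in\mathcal{I}$ to the corresponding statements for $F_\a^\ell$.

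Once every algebra generator $x$ of $\mathcal{I}$ satisfies $\varepsilon(x)=0$, $\Delta(x)\in\mathcal{I}\ot U+U\ot\mathcal{I}$, and $S(x)\in\mathcal{I}$, the standard fact that these conditions are closed under sums and products (for the coideal condition, $\Delta(xy)=\Delta(x)\Delta(y)$ and $\mathcal{I}\ot U+U\ot\mathcal{I}$ is a two-sided ideal of $U\ot U$) shows that $\mathcal{I}$ is a Hopf ideal. Therefore the quotient $\mathfrak{u}_{r,s}(G_2)=U_{r,s}(G_2)/\mathcal{I}$ inherits a Hopf algebra structure, and by Theorem \ref{2.4} and Corollary \ref{2.5} it has the finite PBW-type basis (3.1), hence is finite-dimensional of dimension $\ell^{16}$. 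I expect the genuine obstacle to have been Lemma \ref{3.11} itself — more precisely, organizing the multi-term coproducts of Lemma \ref{3.10} for $E_{112}$, $E_{1112}$, and especially $E_{11212}$ into a form to which a single quantum binomial identity applies; once that lemma is in hand (as it is in the excerpt), the passage to $S(E_\a^\ell)$ and to the $F$'s via $\tau$ is formal.
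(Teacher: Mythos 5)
Your proposal is correct and follows essentially the same route as the paper: counit and group-like generators handled directly, the coproduct condition for $E_\a^\ell$ delegated to Lemma \ref{3.11}, the antipode condition extracted from $m\circ(S\ot\mathrm{id})\circ\Delta=\varepsilon(\cdot)1$, and the $F_\a^\ell$ case obtained by applying $\tau$. The only cosmetic point is that in the antipode step what matters is that the non-$E_\a^\ell\ot 1$ terms lie in $U\ot\mathcal{I}$ (so that $S(u)v\in U\cdot\mathcal{I}\subseteq\mathcal{I}$ without yet knowing $S(\mathcal{I})\subseteq\mathcal{I}$), which they do since they lie in $\mathcal{I}\ot\mathcal{I}$.
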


\section{Isomorphisms of $\mathfrak{u}_{r,s}(G_2)$}
Write $\mathfrak{u}=\mathfrak{u}_{r,s}=\mathfrak{u}_{r,s}(G_2)$. Let
$G$ denote the group generated by $\omega_i$, $\omega'_i\ ( i=1, 2)$
in the restricted quantum group $\mathfrak{u}$. Define linear
subspace $\mathfrak{a}_k$ of $\mathfrak{u}$ by
\begin{gather*}
\begin{split}
\mathfrak{a}_0=\mathbb{K}G, \qquad \mathfrak{a}_1=\mathbb{K}G
+\sum\limits_{i=1}^{2}(\mathbb{K}e_iG+\mathbb{K}f_iG), \\
\mathfrak{a}_k=(\mathfrak{a}_1)^k \quad \textit{for}\quad  k\geq 1.
\end{split}\tag{4.1}
\end{gather*}
Note that $1 \in \mathfrak{a}_0$, $\Delta(\mathfrak{a}_0)\subseteq
\mathfrak{a}_0\otimes \mathfrak{a}_0$, $\mathfrak{a}_1$ generates
$\mathfrak{u}$ as an algebra, and $\Delta(\mathfrak{a}_1)\subseteq
\mathfrak{a}_1\otimes \mathfrak{a}_0+\mathfrak{a}_0\otimes
\mathfrak{a}_1$. By \cite{M}, $\{\mathfrak{a}_k\}$ is a coalgebra
filtration of $\mathfrak{u}$ and
$\mathfrak{u}_0\subseteq\mathfrak{a}_0$, where the coradical
$\mathfrak{u}_0$ of $\mathfrak{u}$ is the sum of all the simple
subcoalgebras of $\mathfrak{u}$. Clearly, $\mathfrak{a}_0\subseteq
\mathfrak{u}_0$ as well, and so $\mathfrak{u}_0=\mathbb{K}G$. This
implies that $ \mathfrak{u}$ is \textit{pointed}, that is, every
simple subcoalgebra of $\mathfrak{u}$ is one-dimensional.

Let $\mathfrak{b}$ be the Hopf subalgebra of
$\mathfrak{u}=\mathfrak{u}_{r,s}(G_2)$ generated by $e_i,
\omega_i^{\pm 1}\ ( i=1, 2)$, and $\mathfrak{b}'$ the Hopf
subalgebra generated by $f_i, (\omega_i')^{\pm 1}\ (i=1, 2)$. The
same argument shows that $\mathfrak{b}$ and $\mathfrak{b}'$ are
pointed as well. Thus, we have
\begin{prop}\label{4.1}
The restricted two-parameter quantum group $\mathfrak{u}_{r,s}(G_2)$
is a pointed Hopf algebra, as are its subalgebras $\mathfrak{b}$ and
$\mathfrak{b}'$.
\end{prop}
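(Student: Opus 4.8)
The plan is to show that the coradical of $\mathfrak{u}_{r,s}(G_2)$ coincides with the group algebra $\mathbb{K}G$, from which pointedness follows by definition, and then to observe that the very same filtration argument applies verbatim to the Hopf subalgebras $\mathfrak{b}$ and $\mathfrak{b}'$. Concretely, I would construct the increasing chain of subspaces $\mathfrak{a}_k$ as in (4.1): take $\mathfrak{a}_0=\mathbb{K}G$, let $\mathfrak{a}_1=\mathbb{K}G+\sum_{i=1}^{2}(\mathbb{K}e_iG+\mathbb{K}f_iG)$, and set $\mathfrak{a}_k=(\mathfrak{a}_1)^k$ for $k\geq 1$. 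Since the defining PBW-type basis (3.1) expresses every element of $\mathfrak{u}$ as a product of the $e_i$'s, $f_i$'s and group-like elements, the family $\{\mathfrak{a}_k\}$ exhausts $\mathfrak{u}$, and $\mathfrak{a}_1$ generates $\mathfrak{u}$ as an algebra.

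Next I would verify the coalgebra-filtration conditions: $1\in\mathfrak{a}_0$, $\Delta(\mathfrak{a}_0)\subseteq\mathfrak{a}_0\otimes\mathfrak{a}_0$ (immediate, since $G$ consists of group-like elements), and the key inclusion $\Delta(\mathfrak{a}_1)\subseteq\mathfrak{a}_1\otimes\mathfrak{a}_0+\mathfrak{a}_0\otimes\mathfrak{a}_1$. This last point follows directly from the explicit coproduct formulas $\Delta(e_i)=e_i\otimes 1+\omega_i\otimes e_i$ and $\Delta(f_i)=1\otimes f_i+f_i\otimes\omega_i'$ together with $G$ being group-like and the $e_i,f_i$ being skew-primitive relative to elements of $G$. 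Then, invoking the standard fact (Montgomery, \cite{M}) that a subspace filtration $\{\mathfrak{a}_k\}$ satisfying these three properties together with $\bigcup_k\mathfrak{a}_k=\mathfrak{u}$ and $\mathfrak{a}_1$ generating $\mathfrak{u}$ is a coalgebra filtration, one concludes that the coradical $\mathfrak{u}_0$ — the sum of all simple subcoalgebras — satisfies $\mathfrak{u}_0\subseteq\mathfrak{a}_0=\mathbb{K}G$. Conversely, each $\omega_i,\omega_i'$ is group-like, so $\mathbb{K}G$ is a sum of one-dimensional (hence simple) subcoalgebras, giving $\mathbb{K}G\subseteq\mathfrak{u}_0$. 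Hence $\mathfrak{u}_0=\mathbb{K}G$, every simple subcoalgebra is one-dimensional, and $\mathfrak{u}_{r,s}(G_2)$ is pointed.

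Finally, for $\mathfrak{b}$ and $\mathfrak{b}'$ I would repeat the argument with the truncated filtrations: for $\mathfrak{b}$, take $\mathfrak{a}_0^{\mathfrak{b}}=\mathbb{K}\langle\omega_1,\omega_2\rangle$ and $\mathfrak{a}_1^{\mathfrak{b}}=\mathfrak{a}_0^{\mathfrak{b}}+\sum_i\mathbb{K}e_i\langle\omega_1,\omega_2\rangle$, and similarly for $\mathfrak{b}'$ using the $f_i$ and $\omega_i'$. The coproduct of $e_i$ (resp. $f_i$) stays inside $\mathfrak{b}\otimes\mathfrak{b}$ (resp. $\mathfrak{b}'\otimes\mathfrak{b}'$), so the same three conditions hold, and one gets that the coradical of $\mathfrak{b}$ (resp. $\mathfrak{b}'$) is the group algebra of the group generated by the $\omega_i$ (resp. $\omega_i'$). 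I do not anticipate a serious obstacle here: the only point requiring any care is checking that $\{\mathfrak{a}_k\}$ genuinely exhausts $\mathfrak{u}$ and that $\Delta(\mathfrak{a}_1)$ lands in the asserted place, both of which are routine consequences of the PBW basis (3.1) and the explicit Hopf structure recorded in Section 2; everything else is a direct appeal to the coalgebra-filtration criterion of \cite{M}.
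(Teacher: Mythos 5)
Your proposal is correct and follows essentially the same route as the paper: building the filtration $\{\mathfrak{a}_k\}$ of (4.1), checking $1\in\mathfrak{a}_0$, $\Delta(\mathfrak{a}_0)\subseteq\mathfrak{a}_0\otimes\mathfrak{a}_0$, $\Delta(\mathfrak{a}_1)\subseteq\mathfrak{a}_1\otimes\mathfrak{a}_0+\mathfrak{a}_0\otimes\mathfrak{a}_1$, invoking Montgomery's coalgebra-filtration criterion to get $\mathfrak{u}_0\subseteq\mathfrak{a}_0$, noting the reverse inclusion since $G$ is group-like, and then repeating the argument for $\mathfrak{b}$ and $\mathfrak{b}'$. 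No gaps.
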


Lemma 5.5.1 in \cite{M} indicates that $\mathfrak{a}_k\subseteq
\mathfrak{u}_k$ for all $k$, where $\{\mathfrak{u}_k\}$ is the
\textit{coradical filtration} of $\mathfrak{u}$ defined inductively
by $\mathfrak{u}_k=\Delta^{-1}(\mathfrak{u}\times
\mathfrak{u}_{k-1}+\mathfrak{u}_0\times \mathfrak{u}).$ In
particular, $\mathfrak{a}_1\subseteq \mathfrak{u}_1.$ By Theorem
5.4.1 in \cite{M}, as $\mathfrak{u}$ is pointed, $\mathfrak{u}_1$ is
spanned by the set of group-like elements $G$ together with all the
skew-primitive elements of $\mathfrak{u}$. We claim that under the
additional hypothesis of the Lemma below,
$\mathfrak{u}_1=\mathfrak{a}_1$. That is, each skew-primitive
element of $u$ is a linear combination of elements of $G$, of
$e_iG$, and of $f_iG\ (i=1, 2)$.

The following Lemma gives a precise description of $\mathfrak{u}_1$.
\begin{lemm}\label{4.2}
Assume that $rs^{-1}$ is a primitive $\ell$th root of unity. Then
$$\mathfrak u_1=\mathbb{K}G
+\sum\limits_{i=1}^{2}(\mathbb{K}e_iG+\mathbb{K}f_iG).$$
\end{lemm}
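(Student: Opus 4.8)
The plan is to show the two inclusions separately; the nontrivial one is $\mathfrak{u}_1 \subseteq \mathbb{K}G + \sum_i(\mathbb{K}e_iG + \mathbb{K}f_iG)$. The reverse inclusion is immediate: each $e_i$ is $(1,\omega_i)$-skew-primitive and each $f_i$ is $(\omega_i',1)$-skew-primitive by the coproduct formulas in Section~2, and multiplying a skew-primitive element by a group-like element keeps it skew-primitive (relative to translated group-likes), so $e_iG, f_iG \subseteq \mathfrak{u}_1$, while $G \subseteq \mathfrak{u}_0 \subseteq \mathfrak{u}_1$. Since $\mathfrak{a}_1 = \mathbb{K}G + \sum_i(\mathbb{K}e_iG + \mathbb{K}f_iG)$ was already shown to satisfy $\mathfrak{a}_1 \subseteq \mathfrak{u}_1$, this direction needs nothing new.

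For the hard inclusion, I would use the PBW basis (3.1) of $\mathfrak{u}$ together with the grading/character decomposition coming from the action of $G$. Since $\mathfrak{u}$ is pointed with $\mathfrak{u}_0 = \mathbb{K}G$, Theorem 5.4.1 of \cite{M} tells us $\mathfrak{u}_1$ is spanned by $G$ and the skew-primitive elements; so it suffices to show that every skew-primitive element lies in $\mathfrak{a}_1$. First I would reduce to homogeneous components: $\mathfrak{u}$ is $\mathbb{Z}^2$-graded by root-lattice degree (equivalently, decomposes into $G$-eigenspaces under the adjoint action, using that $rs^{-1}$, $r^3s^{-3}$, etc., generate enough characters — here is where the hypothesis that $rs^{-1}$ is a primitive $\ell$th root of unity is used to guarantee distinct characters separate the PBW monomials of different degrees). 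A skew-primitive element $x$ with $\Delta(x) = x \otimes g + h \otimes x$ forces $g,h \in G$ and, after subtracting a scalar multiple of $h - g$ (which is genuinely skew-primitive and lies in $\mathbb{K}G \subseteq \mathfrak{a}_1$), we may assume $x \in \ker\varepsilon$ and that $x$ is a sum of PBW monomials all having the \emph{same} nonzero root-lattice degree $\mu$; the group-like part is then pinned down as $g = 1$, $h = \omega^\mu$ (or the primed analogue on the negative side).

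The core computation is then: among the PBW monomials of a fixed degree $\mu \neq 0$, determine which $\mathbb{K}$-linear combinations are $(\,\omega^\mu,1)$- or $(1,\omega^\mu)$-skew-primitive. For this I would compute $\Delta$ on each relevant PBW monomial modulo lower terms, using Lemma~3.10 (and its $\tau$-image on the $F$ side) to handle the root-vector coproducts, and the $r_is_i^{-1}$-binomial expansions of $\Delta(e_i^n)$, $\Delta(f_i^n)$ as in the proof of Lemma~3.11. A monomial of degree $\mu$ is skew-primitive only if all the "middle" terms of its coproduct cancel; a degree count shows these middle terms cannot cancel against each other unless the monomial is already one of $e_i$, $f_i$ (times a group-like), because any product involving two or more root-vector factors, or a power $e_i^k$ with $2 \le k \le \ell-1$, produces a nonzero intermediate term $\binom{k}{1}_{r_is_i^{-1}}e_i^{k-1}\omega_i \otimes e_i$ or a genuinely mixed tensor (e.g.\ the $\omega_2 e_1 \otimes e_2$ term in $\Delta(E_{12})$) that is linearly independent from everything produced at the same bidegree by the other monomials. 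I expect this linear-independence bookkeeping — organizing the $\Delta$-images of all degree-$\mu$ PBW monomials and checking no cancellation of middle terms is possible except in the trivial cases — to be the main obstacle; it is essentially the same style of argument as in \cite{BW3, HW2}, but the length of the $G_2$ Lyndon basis and the complicated coproducts of $E_{112}, E_{1112}, E_{11212}$ make it more involved, and one must invoke $\ell$ coprime to $3$ (Section~3.3) to ensure $E_{1112}^\ell$ rather than a lower power was killed, so that the relevant powers in (3.1) really do run up to $\ell - 1$.
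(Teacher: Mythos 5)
Your overall strategy is the right one and, as far as one can tell, the intended one: the paper states Lemma \ref{4.2} without proof (it is invoked silently at the start of the proof of the next lemma), so there is nothing in the text to compare against; the analogous lemmas in \cite{BW3, HW2} are proved exactly as you describe, via Theorem 5.4.1 of \cite{M}, the root-lattice grading, and a term-by-term analysis of the coproducts of PBW monomials. Your easy inclusion and the reduction to homogeneous skew-primitives are fine, though note that the reduction needs only the grading of $\mathfrak{u}$ by the root lattice $Q=\mathbb{Z}\alpha_1\oplus\mathbb{Z}\alpha_2$ (under which $\Delta$ is homogeneous and the group-likes $g,h$ have degree $0$), not any separation of characters. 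The hypothesis that $rs^{-1}$ is a primitive $\ell$th root of unity enters elsewhere: it guarantees that $\left(k\atop j\right)_{r_is_i^{-1}}\neq 0$ for $0<j<k<\ell$ (with $\ell$ coprime to $3$ ensuring $r^3s^{-3}$ also has order $\ell$), which is precisely what prevents $e_i^k$, $1<k<\ell$, from being skew-primitive; if $rs^{-1}$ had smaller order $\ell'$, then $e_i^{\ell'}$ would be a genuine extra skew-primitive and the lemma would fail. So your attribution of the hypothesis to ``character separation'' misplaces where it bites, even though your displayed middle term $\left(k\atop 1\right)_{r_is_i^{-1}}e_i^{k-1}\omega_i\otimes e_i$ implicitly uses it correctly.

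The genuine gap is that the decisive step --- that among all PBW monomials of a fixed degree $\mu$ the ``middle'' terms of their coproducts are linearly independent, so that no nontrivial linear combination beyond the listed ones can be skew-primitive --- is asserted rather than proved, and this is not a routine degree count. Monomials mixing $E$'s and $F$'s of the same total degree must be compared (for instance $E_{112}F_{12}$ and $e_1$, both of degree $\alpha_1$; or $E_{12}F_{12}$, $e_if_i$ and group-likes, all of degree $0$), and one must organize the middle terms by the finer $Q\times Q$-bidegree of $\mathfrak{u}\otimes\mathfrak{u}$, using Lemma \ref{3.10} and its $\tau$-images together with the triangular decomposition, and check that each offending bidegree is hit by at most one monomial or that the resulting linear system forces all coefficients to vanish. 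That bookkeeping is the entire mathematical content of the lemma; your proposal correctly identifies it as the main obstacle but does not carry it out, so what you have is a credible program rather than a proof.
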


Given two group-like elements $g, h$ in a Hopf algebra $H$, let
$P_{g,h}(H)$ denote the set of skew-primitive elements of $H$ given
by
$$P_{g,h}(H)=\{x\in H\mid \Delta(x)=x\otimes g+h\otimes x\}.$$ We
want to compute $P_{1,\sigma}(\mathfrak{u}_{r,s})$ and
$P_{\sigma,1}(\mathfrak{u}_{r,s})$ for $\sigma \in G$.

\begin{lemm}\label{4} Assume that $rs^{-1}$ is a primitive $\ell$th root of
unity. Then we have

$(1)$\quad
$P_{1,\omega_i}(\mathfrak{u}_{r,s})=\mathbb{K}(1-\omega_i)+\mathbb{K}e_i$,
\quad $
P_{1,\omega_i'^{-1}}(\mathfrak{u}_{r,s})=\mathbb{K}(1-\omega_i'^{-1})+\mathbb{K}f_i\omega_i'^{-1}$,

$\qquad \ P_{1,\sigma}(\mathfrak{u}_{r,s})=\mathbb{K}(1-\sigma)
\quad \textrm{if}\quad
 \sigma\not\in \{\omega_i, \omega_i'^{-1}\mid i=1, 2\,\};$

$(2)\quad
P_{\omega'_i,1}(\mathfrak{u}_{r,s})=\mathbb{K}(1-\omega'_i)+\mathbb{K}f_i$,
\quad $
P_{\omega_i^{-1},1}(\mathfrak{u}_{r,s})=\mathbb{K}(1-\omega_i^{-1})+\mathbb{K}e_i\omega_i^{-1}$,

$\qquad \ P_{\sigma,1}(\mathfrak{u}_{r,s})=\mathbb{K}(1-\sigma)
\quad \textrm{if}\quad
 \sigma\not\in\{ \omega'_i, \omega_i^{-1} \mid i=1, 2\,\}.$
\end{lemm}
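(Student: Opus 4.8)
The plan is to exploit the description of $\mathfrak{u}_1$ provided by Lemma \ref{4.2} together with the grading/filtration already available. Since $rs^{-1}$ is a primitive $\ell$th root of unity, Lemma \ref{4.2} gives $\mathfrak{u}_1=\mathbb{K}G+\sum_{i=1}^{2}(\mathbb{K}e_iG+\mathbb{K}f_iG)$. Every skew-primitive element of $\mathfrak{u}$ lies in $\mathfrak{u}_1$, so for fixed group-likes $g,h$ the space $P_{g,h}(\mathfrak{u}_{r,s})$ is contained in $\mathbb{K}G+\sum_i(\mathbb{K}e_iG+\mathbb{K}f_iG)$. Thus the whole computation reduces to testing, for a general element $x=\sum_{\sigma\in G}a_\sigma\sigma+\sum_{i,\sigma}(b_{i,\sigma}e_i\sigma+c_{i,\sigma}f_i\sigma)$, when $\Delta(x)=x\ot g+h\ot x$ holds, and then solving the resulting linear constraints on the coefficients.

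First I would record the coproducts of the basis building blocks: $\Delta(\sigma)=\sigma\ot\sigma$ for $\sigma\in G$; from $\Delta(e_i)=e_i\ot 1+\omega_i\ot e_i$ and multiplicativity, $\Delta(e_i\sigma)=e_i\sigma\ot\sigma+\omega_i\sigma\ot e_i\sigma$; and from $\Delta(f_i)=1\ot f_i+f_i\ot\omega_i'$, $\Delta(f_i\sigma)=\sigma\ot f_i\sigma+f_i\sigma\ot\omega_i'\sigma$ (here one uses that group-likes commute past $e_i,f_i$ only up to a scalar from $(G2)$–$(G3)$, but since $\sigma$ is group-like this just relabels the group element on the left leg — I will be careful to keep track of the precise group elements). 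Plugging the general $x$ into $\Delta(x)=x\ot g+h\ot x$ and matching coefficients against the basis $\{(\text{basis elt})\ot(\text{basis elt})\}$ of $\mathfrak{u}\ot\mathfrak{u}$ (which is legitimate since the PBW/monomial basis (3.1) is genuinely a basis), the group-part forces $\sum a_\sigma\sigma\ot\sigma = (\sum a_\sigma\sigma)\ot g + h\ot(\sum a_\sigma\sigma)$, whose solutions are spanned by $h-g$ when $h,g$ are group-like; combined with the constant that appears this yields the $\mathbb{K}(1-\sigma)$ summand in each case (with $g=\sigma$, $h=1$ in part (1), and $g=1$, $h=\sigma$ in part (2)). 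The $e_i\sigma$ terms contribute only when the pair $(g,h)$ matches $(\sigma,\omega_i\sigma)$; setting $g=\sigma$ this forces $h=\omega_i$ and $\sigma=1$ after normalizing, giving the extra $\mathbb{K}e_i$ in $P_{1,\omega_i}$, and analogously $f_i\omega_i'^{-1}$ appears exactly in $P_{1,\omega_i'^{-1}}$ once one rewrites $\Delta(f_i\omega_i'^{-1})=\omega_i'^{-1}\ot f_i\omega_i'^{-1}+f_i\omega_i'^{-1}\ot 1$. Part (2) is obtained from part (1) by applying the anti-automorphism $\tau$, which swaps $e_i\leftrightarrow f_i$, $\omega_i\leftrightarrow\omega_i'$, and reverses the two tensor legs, so $\tau$ sends $P_{g,h}$ to $P_{\tau(h),\tau(g)}$; this immediately converts each statement of (1) into the corresponding statement of (2).

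The genuinely delicate point is the \emph{reverse} inclusion — showing that no other skew-primitives exist — and this is entirely carried by Lemma \ref{4.2}: without the hypothesis that $rs^{-1}$ is a primitive $\ell$th root of unity one could have, e.g., powers $e_i^k$ or quantum root vectors $E_{12}$ etc.\ becoming skew-primitive, and indeed Remark 3.8 and the degenerate cases hint at such phenomena. So the main obstacle, if Lemma \ref{4.2} were not already available, would be ruling out skew-primitives of higher PBW-degree; since we may assume Lemma \ref{4.2}, the remaining work is the finite linear-algebra bookkeeping described above, namely checking for each of the finitely many ``shapes'' of $(g,h)$ which coefficient patterns survive. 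I would organize this by first disposing of the generic case $\sigma\notin\{\omega_i,\omega_i'^{-1}\}$ (resp.\ $\sigma\notin\{\omega_i',\omega_i^{-1}\}$), where only the $\mathbb{K}(1-\sigma)$ solutions remain, and then treating the four special values of $\sigma$ one at a time, in each case exhibiting the claimed extra basis vector and verifying by the coproduct formulas above that it is skew-primitive and that nothing else is.
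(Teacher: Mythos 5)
Your proposal is correct and follows essentially the same route as the paper: reduce to $\mathfrak{u}_1=\mathbb{K}G+\sum_i(\mathbb{K}e_iG+\mathbb{K}f_iG)$ via Lemma \ref{4.2}, expand a general element, and match coefficients in $\Delta(x)=x\ot g+h\ot x$ against the monomial basis. The only (harmless) divergence is that you deduce part (2) from part (1) via the anti-automorphism $\tau$, whereas the paper simply repeats the symmetric coefficient computation; both are fine, though if you use $\tau$ you should note that it swaps $r\leftrightarrow s$ and so is only semilinear over $\mathbb{K}$ in the root-of-unity setting.
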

\begin{proof} (1)
Assuming $rs^{-1}$ is a primitive $\ell$th  root of unity, we have
$$\mathbb{K}G+\sum\limits_{g,h \in G}P_{g,h}(\mathfrak{u}_{r,s})=\mathbb{K}G
+\sum\limits_{i=1}^{2}(\mathbb{K}e_iG+\mathbb{K}f_iG).$$ Thus, any
$x\in P_{1,\sigma}(\mathfrak{u}_{r,s})$ (where $\sigma \in G$) may
be written as a linear combination
$$ x=\sum\limits_{g\in G}\gamma_gg+\sum\limits_{g\in
G}\sum\limits_{i=1}^{2}\alpha_{i,g}e_ig+\sum\limits_{g\in
G}\sum\limits_{i=1}^{2}\beta_{i,g}f_ig,$$ where $\gamma_g,
\alpha_{i,g}$ and $\beta_{i,g}$ are scalars. Comparing
$\Delta(x)=x\otimes 1+\sigma\otimes x$ with the coproduct of the
right side, which is
$$\sum\limits_{g\in
G}\gamma_gg\otimes g+\sum\limits_{g\in
G}\sum\limits_{i=1}^{2}\alpha_{i,g}(e_ig\otimes g+\omega_ig\otimes
e_ig)+\sum\limits_{g\in G}\sum\limits_{i=1}^{2}\beta_{i,g}(g\otimes
f_ig+f_ig\otimes \omega'_ig),$$ we obtain $\beta_{i,g}=0$ for all
$i$ and $g\ne \omega_i'^{-1}$, and $\alpha_{i,g}=0$ for all $g\neq
1$. A further comparison of the group-like components yields
$\gamma_{\sigma}=-\gamma_1$  and $\gamma_g=0$ for all $g\neq
1,\sigma$. Finally, comparing $\Delta(x)$ with
$\Delta(\gamma_1(1-\sigma)+\sum\limits_{i=1}^2\alpha_{i,1}e_i+\sum\limits_{i=1}^2\beta_{i,\omega_i'^{-1}}f_i\omega_i'^{-1})$
yields  $\alpha_{i,1}=0$ for all $i$ when $\sigma \notin
\{\omega_1,\omega_2\}$, and $\beta_{i,\omega_i'^{-1}}=0$ for all $i$
when $\sigma \notin \{\omega_1'^{-1},\omega_2'^{-1}\}$.

(i) When $\sigma=\omega_i$ for some $i$, we have $\alpha_{j,1}=0$
for all $j\neq i$; and $\beta_{j,\omega_j'^{-1}}=0$ for all $j$. So
$x=\gamma_1(1-\omega_i)+\alpha_{i,1}e_i$.

(ii) When $\sigma=\omega_i'^{-1}$ for some $i$, we have
$\beta_{j,\omega_j'^{-1}}=0$ for all $j\neq i$; and $\alpha_{j,1}=0$
for all $j$. So
$x=\gamma_1(1-\omega_i'^{-1})+\beta_{i,\omega_i'^{-1}}f_i\omega_i'^{-1}$.
Thus
\begin{gather*}
\begin{split}
P_{1,\omega_i}(\mathfrak{u}_{r,s})=\mathbb{K}(1-\omega_i)+\mathbb{K}e_i,\quad 1\leq i\leq 2,\\
P_{1,\omega_i'^{-1}}(\mathfrak{u}_{r,s})=\mathbb{K}(1-\omega_i'^{-1})+\mathbb{K}f_i\omega_i'^{-1},\quad
1\leq i\leq 2,\\
P_{1,\sigma}(\mathfrak{u}_{r,s})=\mathbb{K}(1-\sigma), \quad
\textit{if } \
 \sigma\not\in \{\omega_i, \omega_i'^{-1}\}, \quad \textit{for any} \  i.\\
\end{split}
\end{gather*}

Similarly, we get (2).
\end{proof}

\begin{remark}
It should be mentioned that the description of the set of left
(right) skew-primitive elements of $\mathfrak
u_{r,s}(\mathfrak{sl}_n)$ (see formulae (3.6) \& (3.7) in
\cite{BW3}) is not complete. This led to many families of
isomorphisms of $\mathfrak u_{r,s}(\mathfrak{sl}_n)$ undiscovered,
while in the second part of the proof of Theorem 5.4 in \cite{HW2},
authors described explicitly all of the Hopf algebra isomorphisms of
$\mathfrak u_{r,s}(\mathfrak{sl}_n)$ (see Theorem 5.5 in
\cite{HW2}).
\end{remark}

\begin{theorem}\label{4.3}
Assume that $rs^{-1}$ and $r's'^{-1}$ are primitive $\ell$th roots
of unity with $\ell\ne 3, 4$, and $\zeta$ is a $3$rd root of unity.
Then $\varphi: \mathfrak{u}_{r,s}\cong\mathfrak{u}_{r',s'}$ as Hopf
algebras if and only if either $(\text{\rm i})$ $(r', s')=\zeta(r,
s)$, $\varphi$ is a diagonal isomorphism$:$
$\varphi(\omega_i)=\tilde \omega_i$,
$\varphi(\omega_i')=\tilde\omega_i'$, $\varphi(e_i)=a_i\tilde e_i$,
$\varphi(f_i)=\zeta^{\delta_{i,1}} a_i^{-1}\tilde f_i$
$(a_i\in\mathbb K^*)$; or $(\text{\rm ii})$ $(r', s')=\zeta(s, r)$,
$\varphi(\omega_i)=\tilde\omega_i'^{-1}$,
$\varphi(\omega_i')=\tilde\omega_i^{-1}$, $\varphi(e_i)=a_i \tilde
f_i\tilde\omega_i'^{-1}$, $\varphi(f_i)=\zeta^{\delta_{i,1}}
a_i^{-1}\tilde\omega_i^{-1}\tilde e_i$ $(a_i\in\mathbb K^*)$.
\end{theorem}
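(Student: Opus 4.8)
The plan is to exploit the classification of skew-primitive elements from Lemma~\ref{4} together with the constraint that a Hopf algebra isomorphism $\varphi$ must preserve the group of group-like elements and carry skew-primitive spaces to skew-primitive spaces. First I would note that $\varphi$ restricts to an isomorphism $G\cong\widetilde G$ of the grading groups (both isomorphic to $\mathbb Z^4/(\ell)$ with a fixed pairing matrix given in Remark~2.3(ii)), so $\varphi$ is determined on the toral part up to the automorphisms of the abelian group that preserve the skew-Hopf pairing $p_{ji}=\langle\omega_i',\omega_j\rangle$. Since for $\ell\ne 3,4$ the only symmetries of the $G_2$ pairing matrix $\begin{pmatrix} rs^{-1}&r^{-3}\\ s^3&r^3s^{-3}\end{pmatrix}$ (up to replacing $(r,s)$ by a common cube-root-of-unity rescaling $\zeta(r,s)$, or by transposition which swaps the roles of $r$ and $s$) are the identity and the diagram flip combined with inversion, this forces $\varphi$ either to fix each $\omega_i$ and $\omega_i'$ (case i), or to send $\omega_i\mapsto\tilde\omega_i'^{-1}$, $\omega_i'\mapsto\tilde\omega_i^{-1}$ (case ii). The condition $\ell\ne 4$ is needed to rule out the extra symmetry that would identify the long and short coroot data when $r^4=s^4$-type degeneracies occur modulo $\ell$; $\ell\ne 3$ is already in force from Section~3.

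Next, having pinned down $\varphi$ on $G$, I would use Lemma~\ref{4} to determine $\varphi$ on the generators $e_i,f_i$. In case (i), since $\varphi(\omega_i)=\tilde\omega_i$, applying $\varphi$ to $\Delta(e_i)=e_i\ot 1+\omega_i\ot e_i$ shows $\varphi(e_i)\in P_{1,\tilde\omega_i}(\mathfrak u_{r',s'})=\mathbb K(1-\tilde\omega_i)+\mathbb K\tilde e_i$; the $(1-\tilde\omega_i)$-component must vanish because $\varphi(e_i)$ together with $\varphi(f_i)$ must reproduce the commutator relation $(G4)$, so $\varphi(e_i)=a_i\tilde e_i$ with $a_i\in\mathbb K^*$. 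Similarly $\varphi(f_i)=b_i\tilde f_i$, and plugging into $[\,e_i,f_i\,]=(\omega_i-\omega_i')/(r_i-s_i)$ — noting that the scalar $1/(r_i-s_i)$ transforms under $(r',s')=\zeta(r,s)$ by a factor that is trivial for $i=2$ (since $r_2=r^3$, $s_2=s^3$ and $\zeta^3=1$) but contributes $\zeta$ for $i=1$ — gives $a_ib_i=\zeta^{\delta_{i,1}}$, i.e. $b_i=\zeta^{\delta_{i,1}}a_i^{-1}$. In case (ii) one argues analogously but now $\varphi(e_i)$ lands in $P_{1,\tilde\omega_i'^{-1}}=\mathbb K(1-\tilde\omega_i'^{-1})+\mathbb K\tilde f_i\tilde\omega_i'^{-1}$, forcing $\varphi(e_i)=a_i\tilde f_i\tilde\omega_i'^{-1}$, and dually $\varphi(f_i)=b_i\tilde\omega_i^{-1}\tilde e_i$; the commutator relation again fixes $b_i=\zeta^{\delta_{i,1}}a_i^{-1}$.

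For the converse I would check directly that the two prescribed assignments respect all the defining relations $(G1)$--$(G6)$ of $\mathfrak u_{r',s'}(G_2)$ after the parameter change, and that they are bijective (an explicit inverse is of the same form). The only genuinely nontrivial verification is that the $(r,s)$-Serre relations $(G5)_1,(G5)_2$ (and $(G6)$) are preserved: in case (i) this follows because replacing $(r,s)$ by $\zeta(r,s)$ leaves invariant every coefficient appearing in $(G5)$ — each is a symmetric polynomial in $r,s$ that is homogeneous of degree divisible by $3$ (e.g. $rs$, $(r+s)(r^2+s^2)$ appearing with total degree adjustments, $(rs)^3$, $(rs)^6$), so $\zeta^{\deg}=1$; in case (ii) one additionally uses that $\tau$-type swapping $r\leftrightarrow s$ turns the $e$-Serre relations into the $f$-Serre relations, which is exactly the content of the anti-automorphism $\tau$ recalled before Corollary~\ref{2.5}, combined with conjugation by the group-likes to absorb the $\tilde\omega_i'^{-1}$ and $\tilde\omega_i^{-1}$ factors.

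\textbf{Main obstacle.} The hard part is the first step: showing that the grading-group isomorphism forced by $\varphi$ leaves no symmetry of the $G_2$ Cartan/pairing data other than the identity and the $(r,s)\leftrightarrow(s,r)$ flip once $\ell\ne 3,4$. This is where the low-rank exceptions must be excluded by a careful case analysis of when the off-diagonal entries $r^{-3},s^3$ of the pairing matrix can be permuted with the diagonal ones modulo $\ell$-th roots of unity; the conditions $r^3\ne s^3$, $r^4\ne s^4$ from the standing hypotheses and $\ell\ne 3,4$ are precisely what make this rigidity hold. Everything after that is bookkeeping with Lemma~\ref{4} and relation $(G4)$.
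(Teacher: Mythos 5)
Your overall architecture matches the paper's: classify the skew-primitives (Lemma \ref{4}), constrain $\varphi$ on the generators, use $(G4)$ to fix the scalars, and verify the converse directly. But the first step, which you yourself flag as the main obstacle, is where your argument has a genuine gap, and its framing is also logically backwards. You cannot first "determine $\varphi$ on the toral part up to automorphisms preserving the pairing $p_{ji}$" and only afterwards invoke Lemma \ref{4}: the pairing is visible to $\varphi$ only through the commutation relations $(G2)$--$(G3)$ with the $e_i, f_i$, so a priori $\varphi(\omega_i)$ could be any of the $\ell^4$ group-like elements of $\tilde G$. What actually pins it down is that $\varphi(e_i)$ is a nonzero element of $P_{1,\varphi(\omega_i)}(\mathfrak u_{r',s'})$; by Lemma \ref{4}, if $\varphi(\omega_i)\notin\{\tilde\omega_j,\tilde\omega_j'^{-1}\mid j=1,2\}$ then $P_{1,\varphi(\omega_i)}=\mathbb K(1-\varphi(\omega_i))$, and applying $\varphi$ to $\omega_ie_i=r_is_i^{-1}e_i\omega_i$ then forces $\varphi(e_i)=0$, contradicting injectivity. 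So Lemma \ref{4} belongs at the start, not in the second step.

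More seriously, your assertion that the only symmetries of the $G_2$ pairing data are the identity and the flip-with-inversion is exactly the content that needs proof, and you leave it as a description of a case analysis rather than performing it. The paper disposes of it concretely: for each of the four configurations of $\varphi(\omega_1),\varphi(\omega_2)$ among $\{\tilde\omega_{j},\tilde\omega_{j}'^{-1}\}$ (including the possibility of swapping the indices $1\leftrightarrow 2$), one applies $\varphi$ to the specific relations $\omega_1e_1=rs^{-1}e_1\omega_1$, $\omega_1e_2=s^3e_2\omega_1$, $\omega_2e_1=r^{-3}e_1\omega_2$ and compares with the corresponding $(G2)$ relations of $\mathfrak u_{r',s'}$; the index-swapped and mixed assignments each yield $r^2=s^2$ or $r^3=s^3$, which contradict the standing hypotheses --- and this is precisely where $\ell\neq 3,4$ enters, since $rs^{-1}$ being a primitive $\ell$th root of unity with $\ell\neq 2,3,4$ rules out $(rs^{-1})^2=1$ and $(rs^{-1})^3=1$. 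The two surviving assignments then give $\langle\omega_j',\omega_i\rangle=\langle\tilde\omega_j',\tilde\omega_i\rangle$ (resp.\ its inverse), whence $(r',s')=\zeta(r,s)$ (resp.\ $\zeta(s,r)$) with $\zeta^3=1$. Until these computations are supplied, the ``only if'' direction is incomplete. The remainder of your proposal --- the vanishing of the $(1-\tilde\omega_i)$ components, the $\zeta^{\delta_{i,1}}$ scalar extracted from $(G4)$ via $r_1-s_1$ versus $r_2-s_2$, and the observation that every Serre coefficient is homogeneous of degree divisible by $3$ so the converse check goes through --- is correct and agrees with what the paper does.
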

\begin{proof}
Suppose $\varphi: \mathfrak{u}_{r,s}\longrightarrow
\mathfrak{u}_{r',s'}$ is a Hopf algebra isomorphism. Write
$\tilde{e}_i,\, \tilde{f}_i,\, \tilde{\omega}_i$,
$\tilde{\omega'_i}$ to distinguish the generators of
$\mathfrak{u}_{r',s'}$. Because
$$
\Delta(\varphi(e_i))=(\varphi\otimes\varphi)
(\Delta(e_i))=\varphi(e_i)\otimes 1+\varphi(\omega_i)\otimes
\varphi(e_i),
$$
we have $\varphi(e_i) \in
P_{1,\varphi(\omega_i)}(\mathfrak{u}_{r',s'})$ for $i=1,2$. As
$\varphi$ is an isomorphism,
$\varphi(\omega_i)\in\mathbb{K}\tilde{G}$. Lemma \ref{4} (1) implies
that $\varphi(\omega_i)\in\{\,\tilde\omega_j,\,
\tilde\omega_j'^{-1}\mid j=1,2\,\}$, that is, case (i): For $i=1,2$,
we have $\varphi(\omega_i)=\tilde{\omega}_{j_i}$ for some $j_i$, and
$\varphi(e_i)=\alpha(1-\tilde{\omega}_{j_i})+\beta \tilde{e}_{j_i}$
($\alpha, \beta \in \mathbb{K}$); \ case (ii): For $i=1,2$, we have
$\varphi(\omega_i)=\tilde\omega_{j_i}'^{-1}$ for some $j_i$, and
$\varphi(e_i)=\alpha'(1-\tilde{\omega}_{j_i}'^{-1})+\beta'
\tilde{f}_{j_i}\tilde{\omega}_{j_i}'^{-1}$ ($\alpha', \beta' \in
\mathbb{K}$); \ case (iii): $\varphi(\omega_1)=\tilde \omega_{j_1}$,
but $\varphi(\omega_2)=\tilde\omega_{j_2}'^{-1}$; \ case (iv):
$\varphi(\omega_1)=\tilde \omega_{j_1}'^{-1}$, but
$\varphi(\omega_2)=\tilde\omega_{j_2}$.

\smallskip
Case (i): In this case, we claim
$\varphi(\omega_i)=\tilde{\omega}_i$. Otherwise, if
$\varphi(\omega_1)=\tilde{\omega}_{2}$ and
$\varphi(\omega_2)=\tilde{\omega}_1$. Applying $\varphi$ to relation
$\omega_1e_1=rs^{-1}e_1\omega_1$ yields
\begin{gather*}
\varphi(\omega_1)\varphi(e_1)=rs^{-1}\varphi(e_1)\varphi(\omega_1),\\
\tilde{\omega}_{2}(\alpha(1-\tilde{\omega}_{2})+\beta
\tilde{e}_{2})=rs^{-1}(\alpha(1-\tilde{\omega}_{2})+\beta
\tilde{e}_{2})\tilde{\omega}_{2}.
\end{gather*}
As $r\neq s$, this forces $\alpha$ to be $0$, so that
$\varphi(e_1)=\beta \tilde{e}_{2}$ for some $\beta\neq 0$. Moreover,
because
$\tilde{\omega}_{2}\tilde{e}_{2}=r_2's_2'^{-1}\tilde{e}_{2}\tilde{\omega}_{2}$,
it follows that $r'^3s'^{-3}=rs^{-1}$.

Applying $\varphi$ to relation $\omega_1e_2=s^3e_2\omega_1$, we get
$\varphi(\omega_1)\varphi(e_2)=s^3\varphi(e_2)\varphi(\omega_1)$,
i.e., $\tilde{\omega}_{2}\tilde{e}_{1}=s^3
\tilde{e}_{1}\tilde{\omega}_{1}$. But
$\tilde{\omega}_{2}\tilde{e}_{1}=r'^{-3}
\tilde{e}_{1}\tilde{\omega}_{1}$ in $\mathfrak{u}_{r',s'}$. So
$r'^3=s^{-3}$. Similarly, to $\omega_2e_1=r^{-3}e_1\omega_2$ to get
$s'^{-3}=r^3$. Thereby, it follows from $r'^3s'^{-3}=rs^{-1}$ that
$r^2=s^2$, which contradicts the assumption. So we proved
$\varphi(\omega_i)=\tilde{\omega}_i$. Similarly, we have
$\varphi(\omega_i')=\tilde\omega_i'$.

Furthermore, applying $\varphi$ to relations $\omega_ie_j=\langle
\omega_j',\omega_i\rangle e_j\omega_i$ for $i, j\in\{1, 2\}$, we get
$\langle \omega_j',\omega_i\rangle=\langle
\tilde\omega_j',\tilde\omega_i\rangle$ for $i, j\in\{1, 2\}$, i.e.,
$r'^3s'^{-3}=r^3s^{-3}$, $r's'^{-1}=rs^{-1}$, $r'^{-3}=r^{-3}$ and
$s'^3=s^3$. Hence, $(r', s')=\zeta(r, s)$, where $\zeta$ is a $3$rd
root of unity. As $\varphi$ preserves relation $(G4)$,
$\varphi(e_i)=a_i\tilde e_i$ and $\varphi(f_i)=\zeta^{\delta_{i,1}}
a_i^{-1}\tilde f_i$ ($a_i\in\mathbb K^*$), that is, $\varphi$ is a
diagonal isomorphism.

\smallskip
Case (ii): Since $\varphi(\omega_i)=\tilde\omega_{j_i}'^{-1}$ for
some $j_i$, and
$\varphi(e_i)=\alpha'(1-\tilde{\omega}_{j_i}'^{-1})+\beta'
\tilde{f}_{j_i}\tilde{\omega}_{j_i}'^{-1}$. In fact, we have
$\varphi(e_i)=\beta' \tilde f_{j_i}\tilde\omega_{j_i}'^{-1}$
(applying $\varphi$ to relation $\omega_ie_i=r_is_i^{-1}e_i\omega_i$
to get $\alpha'=0$). We claim
$\varphi(\omega_i)=\tilde\omega_i'^{-1}$. In fact, if
$\varphi(\omega_1)=\tilde\omega_2'^{-1}$ and
$\varphi(\omega_2)=\tilde\omega_1'^{-1}$, then $\varphi(e_1)=a_1
\tilde f_2\tilde\omega_2'^{-1}$ and $\varphi(e_2)=a_2 \tilde
f_1\tilde\omega_1'^{-1}$. Using $\varphi$ to relation
$\omega_1e_1=rs^{-1}e_1\omega_1$, we get
$rs^{-1}=(r'^3s'^{-3})^{-1}$; to relation
$\omega_2e_1=r^{-3}e_1\omega_2$ to get $r^{-3}=r'^3$, to relation
$\omega_1e_2=s^3e_2\omega_1$ to get $s^3=s'^{-3}$. So we have
$(rs^{-1})^2=1$. This contradicts the assumption. Hence,
$\varphi(\omega_i)=\tilde\omega_i'^{-1}$,
$\varphi(\omega_i')=\tilde\omega_i^{-1}$, $\varphi(e_i)=a_i \tilde
f_i\tilde\omega_i'^{-1}$, where $a_i\in\mathbb K^*$.

Using $\varphi$ to relation $\omega_1e_1=rs^{-1}e_1\omega_1$, we get
$rs^{-1}=(r's'^{-1})^{-1}$; to relation $\omega_1e_2=s^3e_2\omega_1$
to get $s^3=r'^3$; to relation $\omega_2e_1=r^{-3}e_1\omega_2$ to
get $r^{-3}=s'^{-3}$. So we have $(r', s')=\zeta(s, r)$, where
$\zeta$ is a $3$rd root of unity. In order to $\varphi$ preserve
relation $(G4)$, one has to take $\varphi(f_i)=\zeta^{\delta_{i,1}}
a_i^{-1}\tilde\omega_i^{-1}\tilde e_i$, where $a_i\in\mathbb K^*$.
Besides, we can easily check that such a $\varphi$ does preserve all
of the Serre relations $(G5)$---$(G6)$.

\smallskip
Case (iii): We claim that this case is impossible. First, we assume
that $\varphi(\omega_1)=\tilde \omega_1$ and
$\varphi(\omega_2)=\tilde\omega_2'^{-1}$. Then we have
$\varphi(e_1)=a_1\tilde e_1$, $\varphi(e_2)=a_2\tilde
f_2\tilde\omega_2'^{-1}$, where $a_1, a_2\in\mathbb K^*$. Using
$\varphi$ to relation $\omega_1e_2=s^3e_2\omega_1$, we get
$s^3=s'^{-3}$; to relation  $\omega_2e_1=r^{-3}e_1\omega_2$ to get
$r^{-3}=s'^{3}$. So we get $r^3=s^3$. This is a contradiction.

Next, we assume that $\varphi(\omega_1)=\tilde \omega_2$, and
$\varphi(\omega_2)=\tilde\omega_1'^{-1}$. Then we have
$\varphi(e_1)=a_1\tilde e_2$, $\varphi(e_2)=a_2\tilde
f_1\tilde\omega_1'^{-1}$, where $a_1, a_2\in\mathbb K^*$. Using
$\varphi$ to relation $\omega_1e_2=s^3e_2\omega_1$, we
 get $s^3=r'^3$; to relation
$\omega_2e_1=r^{-3}e_1\omega_2$ to get $r^{-3}=r'^{-3}$. So we have
$r^3=s^3$, which is contrary to the condition.

\smallskip
Case (iv): Similarly to case (iii), it is also impossible.

So we complete the proof.
\end{proof}

\section{$\mathfrak{u}_{r,s}(G_2)$ is a Drinfel'd double}

We will show that $\mathfrak{u}\cong D(\mathfrak{b})$ under a few
assumptions. Let $\theta$ be a primitive $\ell$th root of unity in
$\mathbb{K}$, and write $r=\theta^y$, $s=\theta^z$.

\begin{lemm}\label{5.1}
Assume that $(3(y^2+z^2+yz),\ell)=1$ and $rs^{-1}$ is a primitive
$\ell$th root of unity. Then $(\mathfrak{b}')^{coop}\cong
\mathfrak{b}^*$ as Hopf algebras.
\end{lemm}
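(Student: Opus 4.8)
The plan is to exhibit an explicit Hopf algebra pairing between $\mathfrak{b}$ and $(\mathfrak{b}')^{coop}$ and show it is nondegenerate. The key observation is that Proposition 2.3 already supplies a skew-dual pairing $\langle\,,\,\rangle\colon \mathcal{B}'\times\mathcal{B}\to\mathbb{Q}(r,s)$ at the level of the infinite-dimensional two-parameter quantum group. First I would check that this pairing descends to the finite-dimensional quotients, i.e. that it induces a well-defined bilinear form $\mathfrak{b}'\times\mathfrak{b}\to\mathbb{K}$; this amounts to verifying that the defining ideal $\mathcal{I}$ (generated by the $E_\alpha^\ell$, $F_\alpha^\ell$, $\omega_k^\ell-1$, $(\omega_k')^\ell-1$) pairs to zero against everything, which uses that $\langle\,,\,\rangle$ is a bialgebra pairing together with the fact that $r,s$ are primitive $\ell$th roots of unity (so all the relevant $(r_is_i^{-1})$- or $(r,s)$-binomial coefficients $\left(\ell\atop j\right)$ and $\left[\ell\atop j\right]$ vanish). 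Equivalently, from the pairing one gets an algebra map $\mathfrak{b}'\to\mathfrak{b}^*$ intertwining coproduct and product appropriately; after passing to $(\mathfrak{b}')^{coop}$ this becomes a Hopf algebra homomorphism $\Psi\colon(\mathfrak{b}')^{coop}\to\mathfrak{b}^*$.

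Next I would compute the values of $\langle\,,\,\rangle$ on products of the quantum root vectors. Using the inductive definitions (2.1)--(2.9) of $E_\alpha$, $F_\alpha$ together with the standard fact that a skew-dual pairing is multiplicative in each slot (against the coproduct of the other), one shows that the PBW monomials of $\mathfrak{b}'$ pair ``triangularly'' with those of $\mathfrak{b}$: the pairing matrix, in the PBW bases from Theorem~\ref{2.4} and Corollary~\ref{2.5} (together with the group-like part $\mathbb{K}G$), is block-triangular with diagonal blocks given by products of the one-dimensional pairings $\langle F_\alpha,E_\alpha\rangle$ and the group-pairing matrix $(p_{ji})=(\langle\omega_i',\omega_j\rangle)$. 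Here I would record that $\langle F_\alpha, E_\alpha\rangle$ is, up to a nonzero scalar depending on the $(r,s)$-integers appearing in the commutation relations of Lemma~\ref{3.1}, a power of $(r_i-s_i)^{-1}$ times a product of bracket-factors, hence nonzero in $\mathbb{K}$ provided $r^2\neq s^2$, $r^3\neq s^3$, etc., which hold by hypothesis.

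The crux is then nondegeneracy: since both $\mathfrak{b}$ and $\mathfrak{b}'$ have dimension $\ell^{8}$ (namely $\ell^4$ from the group part and $\ell^6$ from the six root vectors, appropriately counted so the counts match — this is exactly $\mathrm{dim}\,\mathfrak{b}=\mathrm{dim}\,\mathfrak{b}^*$), it suffices to show the pairing matrix is invertible, i.e. that its determinant is a nonzero element of $\mathbb{K}$. By the triangularity above this determinant is (a monomial in the $p_{ji}$ coming from $\det$ of the group block, which is $\pm$ a power of $rs^{-1}$ and of the other $p_{ji}$'s, hence nonzero) times a product over $\alpha\in\Phi^+$ of powers of the $\langle F_\alpha,E_\alpha\rangle$. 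The hypothesis $(3(y^2+z^2+yz),\ell)=1$ — equivalently that a certain quadratic-form value attached to the $G_2$ Cartan data is a unit mod $\ell$, which is what governs the order of the relevant diagonal pairing entries and the invertibility of the group block $\det(p_{ji})^{1/\cdots}$ — together with $rs^{-1}$ being a primitive $\ell$th root of unity guarantees none of these factors vanishes. I expect the main obstacle to be exactly this determinant bookkeeping: one must carefully track which roots of unity occur as the diagonal pairing values $\langle F_\alpha, E_\alpha\rangle$ through the six inductive steps (each introducing factors like $r^3-s^3$, $\xi$, $\eta$, $\zeta$) and confirm that the arithmetic condition $(3(y^2+z^2+yz),\ell)=1$ (plus $\ell$ coprime to $3$, already assumed) is precisely what makes the total determinant a unit. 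Once nondegeneracy is established, $\Psi$ is injective, and by the dimension count it is an isomorphism, giving $(\mathfrak{b}')^{coop}\cong\mathfrak{b}^*$ as Hopf algebras.
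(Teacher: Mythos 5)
Your route --- descending the skew-dual pairing $\langle\,,\,\rangle:\mathcal B'\times\mathcal B\to\mathbb Q(r,s)$ to $\mathfrak b'\times\mathfrak b$ and proving nondegeneracy via triangularity on PBW monomials --- is genuinely different from the paper's. The paper instead defines $\phi(\omega_j')=\gamma_j$ and $\phi(f_j)=\eta_j=\sum_{g\in G(\mathfrak b)}(e_jg)^*$ explicitly inside $\mathfrak b^*$, checks the defining relations (including the two Serre relations for the $\eta_j$) by direct evaluation on basis elements, and then uses pointedness: a Hopf algebra map out of a pointed Hopf algebra is injective once it is injective on the first term of the coradical filtration, which by Lemma 4.2 is just $\mathbb K G(\mathfrak b')+\sum_i\mathbb K f_iG(\mathfrak b')$. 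Everything therefore reduces to the invertibility of the $\ell^2\times\ell^2$ character matrix $\bigl((\gamma_1^{k_1}\gamma_2^{k_2})(\omega_1^{j_1}\omega_2^{j_2})\bigr)$, i.e.\ to $\det\begin{pmatrix} y-z & 3z\\ -3y & 3(y-z)\end{pmatrix}=3(y^2+z^2+yz)$ being a unit mod $\ell$. Your approach would make the relation-checking automatic (the images satisfy all relations because they come from a bialgebra pairing on the quotient), but at the price of controlling the full $\ell^8\times\ell^8$ pairing matrix instead of only its degree $\le 1$ part.

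As written, though, there are two real gaps. First, your claim that the determinant of the group block is ``a monomial in the $p_{ji}$, hence nonzero'' is false: that block is a character-evaluation matrix whose entries are all nonzero roots of unity but which is singular exactly when two of the characters $\gamma_1^{k_1}\gamma_2^{k_2}$ coincide; its invertibility is \emph{equivalent} to the hypothesis $(3(y^2+z^2+yz),\ell)=1$, not automatic. (You state the correct version one sentence later, but the first claim would render the hypothesis superfluous, so the logic has to be repaired.) Second, the triangularity of the PBW pairing matrix and the nonvanishing of its diagonal --- which requires computing $\langle F_\alpha,E_\alpha\rangle$ for all six positive roots through the inductive definitions and checking that the relevant $(r,s)$-factorials $[n]_{q_\alpha}!$ with $q_\alpha=(rs^{-1})^{(\alpha,\alpha)/2}$ do not vanish for $n<\ell$ (using $(\ell,3)=1$ for the long roots) --- is the technical heart of your argument and is only asserted, not carried out. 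Until both points are supplied this is a program rather than a proof. (A minor slip: $\dim\mathfrak b=\ell^2\cdot\ell^6=\ell^8$; the group part contributes $\ell^2$, not $\ell^4$.)
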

\begin{proof}
Define $\gamma_j, \eta_j \ (j=1, 2)$ of $\mathfrak{b}^*$ as follows:
The $\gamma_j$'s are algebra homomorphisms with
$\gamma_{j}(\omega_i)=\langle \omega_j',\omega_i\rangle$ and
$\gamma_{j}(e_i)=0$ for $i=1, 2$. So they are  group-like elements
in $\mathfrak{b}^*$. Let $\eta_j=\sum\limits_{g \in G(\mathfrak
b)}(e_jg)^*$, where $G(\mathfrak{b})$ is  the group generated by
$\omega_1, \omega_2$ and the asterisk denotes the dual basis element
relative to the PBW-basis of $\mathfrak{b}$. The isomorphism $\phi:
(\mathfrak{b}')^{coop}\longrightarrow \mathfrak{b}^*$ is then
defined by
$$\phi(\omega'_j)=\gamma_j \quad \textrm{and}\quad
\phi(f_j)=\eta_j.$$  We have to check  that $\phi$ is a Hopf algebra
homomorphism and  it is a bijection.

First,  we observe that the $\gamma_j$'s are invertible elements in
$\mathfrak{b}^*$ that commute with one another and
$\gamma_j^\ell=1$. Note that $\eta_j^\ell=0$, as it is $0$ on any
basis element of $\mathfrak{b}$. We calculate
$\gamma_j\eta_i\gamma_j^{-1}$: It is nonzero only on basis elements
of the form $e_i\omega_1^{k_1}\omega_2^{k_2}$, and on such an
element it takes the value
\begin{gather*}
\begin{split}
& (\gamma_j\otimes\eta_i\otimes \gamma_j^{-1})((e_i\otimes 1\otimes
1+ \omega_i\otimes e_i\otimes 1+\omega_i\otimes \omega_i\otimes
e_i)(\omega_1^{k_1}\omega_2^{k_2})^{\otimes 3}) \\&=
\gamma_j(\omega_i\omega_1^{k_1}\omega_2^{k_2})\eta_i(e_i\omega_1^{k_1}\omega_2^{k_2})
\gamma_j^{-1}(\omega_1^{k_1}\omega_2^{k_2})\\
&=\gamma_{j}(\omega_i)=\langle\omega_j',\omega_i\rangle.
\end{split}
\end{gather*}
Thus we have
$\gamma_j\eta_i\gamma_j^{-1}=\langle\omega_j',\omega_i\rangle\eta_i,$
which corresponds to relation $(G3)$ for $\mathfrak{b}'$. Next, we
check relation $(G6)$: we compute
\begin{gather*}
\begin{split}
& (\eta_2^2\eta_{1})(e_2^2e_{1})\\
 &=(\eta_2\otimes\eta_2\otimes\eta_{1})((e_2\otimes 1\otimes
1+\omega_2\otimes e_2\otimes 1+\omega_2\otimes \omega_2\otimes
e_2)^2\\  & \quad \times(e_{1}\otimes 1\otimes 1+\omega_{1}\otimes
e_{1}\otimes 1+\omega_{1}\otimes
\omega_{1}\otimes e_{1}))\\
\end{split}
\end{gather*}
\begin{gather*}
\begin{split}
&=(\eta_2\otimes\eta_2\otimes\eta_{1})(e_2\omega_2\omega_{1}\otimes
e_2\omega_{1}\otimes e_{1}+\omega_2e_2\omega_{1}\otimes
e_2\omega_{1}\otimes e_{1})\\
&=(\eta_2\otimes\eta_2\otimes\eta_{1})((1+r^3s^{-3})e_2\omega_2\omega_{1}\otimes
e_2\omega_{1}\otimes e_{1})=1+r^3s^{-3},
\end{split}
\end{gather*}
and similarly, $(\eta_2^2\eta_{1})(e_2E_{12})=0$. Thus, for any
$k_1,k_2$, we have $(\eta_2^2\eta_{1})(e_2^2$
$e_{1}\omega_1^{k_1}\omega_2^{k_2})$ $=1+r^3s^{-3}$ and
$(\eta_2^2\eta_{1})(e_2E_{12}\omega_1^{k_1}\omega_2^{k_2})=0$. On
all other basis elements, $\eta_2^2\eta_{1}$ is $0$. Therefore, we
have
$$\eta_2^2\eta_{1}=\sum\limits_{g \in
G}(1+r^3s^{-3})(e_2^2e_{1}g)^*. \eqno(*)$$ Similarly, we calculate
\begin{gather*}
\begin{split}
& (\eta_2\eta_{1}\eta_2)(e_2^2e_{1})\\
& \quad
=(\eta_2\otimes\eta_{1}\otimes\eta_{2})(e_2\omega_2\omega_{1}\otimes
\omega_{2}e_{1}\otimes e_{2}+\omega_2e_2\omega_{1}\otimes
\omega_{2}e_{1}\otimes e_{2})\\
&\quad =r^{-3}+s^{-3},\\
& (\eta_2\eta_{1}\eta_2)(e_2E_{12})\\
& \quad =(\eta_2\eta_{1}\eta_2)(e_2e_{1}e_2-s^3e_2^2e_{1})\\
 &
\quad
=(\eta_2\otimes\eta_{1}\otimes\eta_{2})(\omega_2\omega_{1}e_2\otimes
\omega_{2}e_{1}\otimes e_{2}+e_2\omega_1\omega_{2}\otimes
e_{1}\omega_{2}\otimes e_{2})-1-r^{-3}s^3\\
&\quad =1-r^{-3}s^3.
\end{split}
\end{gather*}
So we have $$\eta_2\eta_{1}\eta_2=\sum\limits_{g \in
G}((r^{-3}+s^{-3})(e_2^2e_{1}g)^*+(1-r^{-3}s^3)(e_2E_{12}g)^*).\eqno(**)$$
We compute
\begin{gather*}
\begin{split}
(\eta_{1}\eta_2^2)(e_2^2e_{1})&=r^{-3}(r^{-3}+s^{-3}),\\
(\eta_{1}\eta_2^2)(e_2E_{12})&=
s^3(s^{-6}-r^{-6}).
\end{split}
\end{gather*}
So we have $$\eta_{1}\eta_2^2=\sum\limits_{g \in
G}(r^{-3}(r^{-3}+s^{-3})(e_2^2e_{1}g)^*+s^3(s^{-6}-r^{-6})(e_2E_{12}g)^*).\eqno(***)$$
We use the above results in $(*)$---$(***)$ to establish the
relation
$$r^3s^3\eta_1\eta_{2}^2-(r^3+s^3)\eta_2\eta_{1}\eta_2+\eta_{2}^2\eta_1=0.$$
Similarly, it is easy to verify that
\begin{gather*}
\begin{split}
\eta_{2}\eta_1^{4} - (r +& s)(r^2 + s^2)\,\eta_1\eta_{2}\eta_1^{3} +
rs(r^2 + s^2)(r^2 + rs + s^2)\,\eta_{1}^{2}\eta_{2}\eta_1^2\\&
-(rs)^3(r + s)(r^{2} + s^2)\,\eta_{1}^3\eta_{2}\eta_1 +\,
(rs)^6\,\eta_{1}^4\eta_2 = 0.
\end{split}
\end{gather*}
Therefore, $\phi $ is an algebra homomorphism.

Now we will check that $\phi$ preserves coproducts. We have already
seen that $\gamma_i$ is a group-like element in $\mathfrak{b}^*$. We
calculate
\begin{gather*}
\begin{split}
\Delta(\eta_i)(e_i\omega_1^{j_1}\omega_2^{j_2}\otimes \omega_1^{k_1}
\omega_2^{k_2})&=\eta_i(e_i\omega_1^{j_1+k_1}\omega_2^{j_2+k_2})=1,\\
\Delta(\eta_i)(\omega_1^{j_1}\omega_2^{j_2}\otimes
e_i\omega_1^{k_1}\omega_2^{k_2})
&=\eta_i(\omega_1^{j_1}\omega_2^{j_2}e_i\omega_1^{k_1}
\omega_2^{k_2})
=\langle
\omega_i',\omega_1\rangle^{j_1}\langle\omega_i',\omega_2\rangle^{j_2}.
\end{split}
\end{gather*}
These are the only basis elements of $\mathfrak{b}\otimes
\mathfrak{b}$ on which $\Delta(\eta_i)$ is nonzero. As a
consequence, we have
\begin{gather*}
(\eta_i\otimes 1+\gamma_i\otimes
\eta_i)(e_i\omega_1^{j_1}\omega_2^{j_2}\otimes \omega_1^{k_1}
\omega_2^{k_2})=1,\\
(\eta_i\otimes 1+\gamma_i\otimes
\eta_i)(\omega_1^{j_1}\omega_2^{j_2}\otimes e_i\omega_1^{k_1}
\omega_2^{k_2})=\langle
\omega_i',\omega_1\rangle^{j_1}\langle\omega_i',\omega_2\rangle^{j_2}.
\end{gather*}
So $\Delta(\eta_i)=\eta_i\otimes 1+\gamma_i\otimes \eta_i$. This
proves that $\phi$ is a Hopf algebra homomorphism.

Finally, we prove that $\phi$ is bijective. As $\mathfrak{b}^*$ and
$(\mathfrak{b}')^{coop}$ have the same dimension, it suffices to
show that $\phi$ is injective. By \cite{M}, we need only show that
$\phi|_{(\mathfrak{b}')_1^{coop}}$ is injective. Lemma \ref{4.2}
yields
$(\mathfrak{b}')_1^{coop}=\mathbb{K}G(\mathfrak{b}')+\sum_{i=1}^{2}\mathbb{K}f_iG(\mathfrak{b}')$,
where $G(\mathfrak{b}')$ is the group generated by $\omega'_1,
\omega'_2$. First, we claim that
$$
\textrm{span}_\mathbb{K}\{
\gamma_1^{k_1}\gamma_2^{k_2}\mid 0\le
k_i<\ell\,\}=\textrm{span}_\mathbb{K}\{(\omega_1^{k_1}\omega_2^{k_2})^*\mid
0\le k_i<\ell\,\}. \quad \leqno (5.1)
$$
This is equivalent to the statement that the
$\gamma_1^{k_1}\gamma_2^{k_2}$'s span the space of characters over
$\mathbb{K}$ of the finite group $\mathbb{Z}/\ell \mathbb{Z}\times
\mathbb{Z}/\ell \mathbb{Z}$ generated by $\omega_1,\omega_2$. We
have assumed that $\mathbb{K}$ contains a primitive $\ell$th root of
unity. Therefore, the irreducible characters of this group are the
functions $\chi_{i_1,i_2}$ given by
$\chi_{i_1,i_2}(\omega_1^{k_1}\omega_2^{k_2})=\theta
^{i_1k_1+i_2k_2},$ where $\theta$ is a primitive $\ell$th root of
unity in $\mathbb{K}$. Note that $\gamma_1=\chi_{y-z,-3y},
\gamma_2=\chi_{3z,3(y-z)}.$  We must show that, given $i_1,i_2$,
there are $k_1,k_2$ such that
$\chi_{i_1,i_2}=\gamma_1^{k_1}\gamma_2^{k_2},$ which is equivalent
to the existence of a solution to the matrix equation $AK=I$ in
$\mathbb{Z}/\ell \mathbb{Z}$ (as these are powers of $\theta$),
where
$A= \begin{pmatrix} y-z & 3z \\
-3y &3(y-z)
\end{pmatrix},$
 $K$ is  the transpose of $(k_1,k_2)$ and $I$ is  the transpose of
 $(i_1,i_2)$. The determinant of  the coefficient matrix $A$
 is $3(y^2+z^2+yz)$, which is invertible in $\mathbb{Z}/\ell
 \mathbb{Z}$ by the hypothesis in the Lemma. Therefore (5.1) holds.
 In particular, this implies that the matrix $$\big(
 (\gamma_1^{k_1}\gamma_2^{k_2})(\omega_1^{j_1}\omega_2^{j_2})\big)_{\bar
 k\times \bar j} \leqno (5.2)$$ is invertible, and that $\phi$ is
 bijection on group-like elements.

 Next we will show for each $i\ (i=1, 2)$ that the following
 matrix is invertible:
$$\big(
 (\eta_i\gamma_1^{k_1}\gamma_2^{k_2})(e_i\omega_1^{j_1}\omega_2^{j_2})\big)_{\bar
 k\times \bar j} \leqno (5.3)$$ This will complete the proof that
 $\phi$ is injective on $(\mathfrak{b}')_1^{coop}$, as desired. We
 will show that the matrix is block upper-triangular. Each matrix
 entry is
\begin{gather*}
\begin{split}
&(\eta_i\otimes\gamma_1^{k_1}\gamma_2^{k_2})(\Delta(e_i)\Delta(\omega_1^{j_1}\omega_2^{j_2}))\\
&\quad=(\eta_i\otimes\gamma_1^{k_1}\gamma_2^{k_2})(e_i\omega_1^{j_1}\omega_2^{j_2}
\otimes\omega_1^{j_1}\omega_2^{j_2}).
\end{split}
\end{gather*}
Thus, (5.3) is precisely the invertible matrix (5.2).
\end{proof}

Recall that the \textit{Drinfel'd double} $D(\mathfrak{b})$ of the
finite-dimensional Hopf algebra $\mathfrak{b}$ is a Hopf algebra
whose underlying coalgebra is $\mathfrak{b}\otimes
(\mathfrak{b}^*)^{coop}$ (that is, the vector space
$\mathfrak{b}\otimes (\mathfrak{b}^*)^{coop}$ with the tensor
product coalgebra structure). As an algebra, $D(\mathfrak{b})$
contains the subalgebras $\mathfrak{b}\otimes 1\cong \mathfrak{b}$
and $1\otimes \mathfrak{b}^* \cong \mathfrak{b}^*$, and if $a \in
\mathfrak{b}$ and $b \in (\mathfrak{b}^*)^{coop}$, then $(a\otimes
1)(1\otimes b)=a \otimes b$ and
$$(1\otimes b)(a\otimes
1)=\sum b_{(1)}(S^{-1}a_{(1)})b_{(3)}(a_{(3)})a_{(2)}\otimes
b_{(2)},$$ where $S^{-1}$ is the composition inverse of the antipode
$S$ for $\mathfrak{b}$.

\begin{theorem}\label{5.2}
Assume that $(3(y^2+z^2+yz),\ell)=1$. Then
 $D(\mathfrak{b})\cong \mathfrak{u}_{r,s}(G_2)$ as Hopf algebras.
\end{theorem}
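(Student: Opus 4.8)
The plan is to construct an explicit Hopf algebra homomorphism $\Phi\colon \mathfrak u_{r,s}(G_2)\to D(\mathfrak b)$ and then show it is an isomorphism by a dimension count. Recall that $\mathfrak b$ is generated by $e_i,\omega_i^{\pm1}$ and, by Lemma~\ref{5.1}, under the hypothesis $(3(y^2{+}z^2{+}yz),\ell)=1$ we have an isomorphism of Hopf algebras $(\mathfrak b')^{coop}\cong \mathfrak b^*$ sending $\omega_i'\mapsto\gamma_i$, $f_i\mapsto\eta_i$. Hence $D(\mathfrak b)=\mathfrak b\otimes(\mathfrak b^*)^{coop}\cong \mathfrak b\otimes\mathfrak b'$ as coalgebras, and $D(\mathfrak b)$ is generated as an algebra by the images of $e_i,\omega_i^{\pm1}$ (from the $\mathfrak b$-factor) together with the images of $\gamma_i^{\pm1}$ and $\eta_i$ (from the $\mathfrak b^*$-factor, which we re-label $\bar\omega_i'^{\pm1}$ and $\bar f_i$). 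So it suffices to define $\Phi$ on the generators of $\mathfrak u_{r,s}(G_2)$ by
\begin{gather*}
\Phi(\omega_i)=\omega_i\otimes 1,\quad \Phi(\omega_i')=1\otimes\gamma_i,\quad
\Phi(e_i)=e_i\otimes 1,\quad \Phi(f_i)=1\otimes\eta_i,
\end{gather*}
and to check that $\Phi$ respects the defining relations $(G1)$--$(G6)$ and the comultiplication.

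First I would verify the relations. Relations $(G1)$ are immediate since group-likes in each tensor factor commute and $\gamma_i^\ell=1$, $\omega_i^\ell=1$ in $\mathfrak u$. Relations $(G2)$ hold inside the $\mathfrak b$-factor by construction of $\mathfrak b$; relations $(G3)$ hold inside the $\mathfrak b^*$-factor because Lemma~\ref{5.1} already established $\gamma_j\eta_i\gamma_j^{-1}=\langle\omega_j',\omega_i\rangle\eta_i$, which is precisely $(G3)$. The Serre relations $(G5)$ hold in the $\mathfrak b$-factor (they hold in $\mathfrak b$), and $(G6)$ hold in the $\mathfrak b^*$-factor — this is exactly the pair of identities $r^3s^3\eta_1\eta_2^2-(r^3{+}s^3)\eta_2\eta_1\eta_2+\eta_2^2\eta_1=0$ and the degree-$5$ relation verified in the proof of Lemma~\ref{5.1}. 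The only genuinely new computation is the cross relation $(G4)$: $[e_i,f_j]=\delta_{ij}\frac{\omega_i-\omega_i'}{r_i-s_i}$. Using the straightening formula in $D(\mathfrak b)$, namely $(1\otimes b)(a\otimes 1)=\sum b_{(1)}(S^{-1}a_{(1)})\,b_{(3)}(a_{(3)})\,a_{(2)}\otimes b_{(2)}$, applied with $a=e_j$, $b=\eta_i$, and using $\Delta(e_j)=e_j\otimes1+\omega_j\otimes e_j$ together with the skew-pairing values $\langle f_i,e_j\rangle=\delta_{ij}/(s_i-r_i)$ from Proposition~2.3 (which encode $\eta_i(e_j)$ and $\gamma_i(\omega_j)$), one computes $(e_j\otimes1)(1\otimes\eta_i)-(1\otimes\eta_i)(e_j\otimes1)=\delta_{ij}\frac{\omega_i\otimes1-1\otimes\gamma_i}{r_i-s_i}$, which is the image of $(G4)$ under $\Phi$. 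That $\Phi$ is a coalgebra map follows from $\Delta_{D(\mathfrak b)}$ being the tensor product coalgebra structure together with $\Delta(\eta_i)=\eta_i\otimes1+\gamma_i\otimes\eta_i$ (established in Lemma~\ref{5.1}) and $\Delta(e_i)=e_i\otimes1+\omega_i\otimes e_i$ in $\mathfrak b$.

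Since $\Phi$ is a Hopf algebra homomorphism and the images $e_i\otimes1,\ \omega_i^{\pm1}\otimes1,\ 1\otimes\gamma_i^{\pm1},\ 1\otimes\eta_i$ generate $D(\mathfrak b)$ as an algebra, $\Phi$ is surjective. Finally, $\dim_{\mathbb K}\mathfrak u_{r,s}(G_2)=\ell^{16}$ by the PBW-type basis~(3.1) following from Theorem~\ref{2.4} and Corollary~\ref{2.5}, while $\dim_{\mathbb K}D(\mathfrak b)=(\dim\mathfrak b)^2=\ell^{16}$ as well, since $\mathfrak b$ has the PBW basis $E_2^{c_1}E_{12}^{c_2}E_{11212}^{c_3}E_{112}^{c_4}E_{1112}^{c_5}E_1^{c_6}\omega_1^{b_1}\omega_2^{b_2}$ with all exponents ranging over $0,\dots,\ell-1$, hence $\dim\mathfrak b=\ell^8$. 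A surjection between finite-dimensional spaces of equal dimension is an isomorphism, so $\mathfrak u_{r,s}(G_2)\cong D(\mathfrak b)$.

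I expect the main obstacle to be the careful bookkeeping in verifying $(G4)$ through the double's multiplication rule: one must track the three-fold coproduct of $e_j$, apply $S^{-1}$ correctly, and match the resulting pairing expressions $\eta_i(\text{-})$, $\gamma_i(\text{-})$ against the normalization $\frac1{s_i-r_i}$ in Proposition~2.3, being consistent with the $r_i$ versus $r$ conventions and with which copy of $\mathfrak b'$ (as $(\mathfrak b^*)^{coop}$ rather than $\mathfrak b^*$) one is using — the "coop" is what makes the signs and the comultiplication on $\eta_i$ come out right. Everything else is either already done in Lemma~\ref{5.1} or is a relation that holds within one of the two tensor factors.
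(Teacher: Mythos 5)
Your overall route is the same as the paper's: the paper also proves this by an explicit generator-by-generator correspondence, writing down the inverse map $\psi\colon D(\mathfrak{b})\to\mathfrak{u}_{r,s}(G_2)$, $\psi(\check{e}_i)=e_i$, $\psi(\check{\omega}_i^{\pm1})=\omega_i^{\pm1}$, $\psi(\check{\gamma}_i^{\pm1})=(\omega_i')^{\pm1}$, $\psi(\check{\eta}_i)=(s_i-r_i)f_i$, and deferring the relation checks and dimension count to the analogous theorem in \cite{HW2}. The one concrete defect in your write-up is the normalization of $\Phi(f_i)$, and it is not cosmetic: with $\Phi(f_i)=1\otimes\eta_i$ the relation $(G4)$ is \emph{not} preserved. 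The functional $\eta_i$ is defined in the proof of Lemma \ref{5.1} as a sum of dual-basis elements, $\eta_i=\sum_{g\in G(\mathfrak{b})}(e_ig)^*$, so it pairs with $e_j$ to give $\delta_{ij}$ --- this is the pairing recorded in $(6.2)$, $(f_j\mid e_i)=\delta_{ij}$ --- and \emph{not} the value $\delta_{ij}/(s_i-r_i)$ of the skew pairing in Proposition 2.2, which is a differently normalized pairing. Running the double's straightening formula with $\eta_i(e_j)=\delta_{ij}$ therefore yields
\begin{equation*}
(e_j\otimes1)(1\otimes\eta_i)-(1\otimes\eta_i)(e_j\otimes1)=\pm\,\delta_{ij}\bigl(\omega_i\otimes1-1\otimes\gamma_i\bigr),
\end{equation*}
with no factor $1/(r_i-s_i)$, whereas $\Phi$ applied to the right-hand side of $(G4)$ produces $\delta_{ij}(\omega_i\otimes1-1\otimes\gamma_i)/(r_i-s_i)$. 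Your stated justification conflates these two pairings.

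The repair is exactly the scalar the paper builds into $\psi$: set $\Phi(f_i)=(s_i-r_i)^{-1}(1\otimes\eta_i)$, i.e.\ take $\Phi=\psi^{-1}$. This rescaling is harmless everywhere else --- $(G3)$ and the $\gamma$-$\eta$ commutation, the Serre relations $(G6)$, the vanishing of $\Phi(F_\alpha^\ell)$ and $\Phi((\omega_k')^\ell-1)$, and the coalgebra compatibility $\Delta(\eta_i)=\eta_i\otimes1+\gamma_i\otimes\eta_i$ are all either homogeneous of degree one in $\eta_i$ or scale-invariant, so they survive unchanged. With that single correction the rest of your argument (surjectivity onto $D(\mathfrak{b})=\mathfrak{b}\otimes(\mathfrak{b}^*)^{coop}$ from the generating sets, plus $\dim\mathfrak{b}=\ell^8$ and hence $\dim D(\mathfrak{b})=\ell^{16}=\dim\mathfrak{u}_{r,s}(G_2)$) is sound and reproduces the paper's proof.
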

\begin{proof}
We denote the image $e_i\otimes 1$ of $e_i$ in $D(\mathfrak{b})$ by
$\check{e}_i$, and similarly for $\omega_i$, $\eta_i$ and
$\gamma_i$. Define $\psi:D(\mathfrak{b})\rightarrow
\mathfrak{u}_{r,s}(G_2)$ on  the generators by
\begin{gather*}
\begin{split}
\psi(\check{e}_i)&=e_i, \quad \psi(\check{\eta}_i)=(s_i-r_i)f_i,\\
\psi(\check{\omega}_i^{\pm 1})&=\omega_i^{\pm 1}, \quad
\psi(\check{\gamma}_i^{\pm 1})=(\omega'_i)^{\pm 1}.
\end{split}
\end{gather*}


Then the proof is the same as that of Theorem 6.2 in \cite{HW2}.
\end{proof}

\section{Integrals and ribbon elements}

Integrals play a basic role in the structure theory of finite
dimensional Hopf algebras $H$ and their duals $H^*$. In this
section, we compute the left and right integrals in the Borel
subalgebra $\mathfrak{b}$ of $\mathfrak{u}_{r,s}(G_2)$ and the
distinguished group-like elements of $\mathfrak{b}$ and
$\mathfrak{b}^*$. Furthermore, we use this information to determine
that $\mathfrak{u}_{r,s}(G_2)$ has  a ribbon element when
$\mathfrak{u}_{r,s}(G_2)\cong D(\mathfrak{b})$.

Let $H$ be a finite-dimensional Hopf algebra. An element $y \in H$
is a \textit{left integral} (resp. \textit{right integral}) if
$ay=\varepsilon(a)y$ (resp. $ya=\varepsilon(a)y$)  for all $a \in
H$. The left (resp. right) integrals form a one-dimensional ideal
$\int^l_H$ (resp. $\int^r_H$) of $H$, and $S_H(\int^r_H)=\int^l_H$
under the antipode $S_H$ of $H$.

When $y\neq 0$ is a left integral of $H$, there exists a unique
group-like element $\gamma$ in the dual algebra $H^*$ (the so-called
\textit{distinguished group-like element} of $H^*$) such that
$ya=\gamma(a)y$. If we had begun instead with a right integral
$y'\in H$, then we would have $ay'=\gamma^{-1}(a)y'$. This is an
easy consequence of the fact that group-like elements are
invertible, and  can be found in [\cite{M}, p.22].

Now if $\lambda \neq 0$ is right integral of $H^*$, then there
exists a unique group-like element $g$ of $H$ (the
\textit{distinguished group-like element} of $H$) such that
$\xi\lambda=\xi(g)\lambda$ for all $\xi \in H^*$. The algebra $H$ is
\textit{unimodular} (i.e., $\int^l_H=\int^r_H$) if and only if
$\gamma=\varepsilon$; and the dual algebra $H^*$ is unimodular if
and only if $g=1$.

The left and right $H^*$-module actions on $H$ are given by
$$
\xi\rightharpoonup a=\sum a_{(1)}\xi(a_{(2)}), \qquad
a\leftharpoonup\xi=\sum \xi(a_{(1)})a_{(2)}
$$
for all $\xi \in H^*$ and $a\in H$. In particular,
$\varepsilon\rightharpoonup a=a=a\leftharpoonup\varepsilon$ for all
$a \in H$. Radford \cite{R} found a remarkable expression relating
the antipode, the distinguished group-like elements $\gamma$ and
$g$, and the $H^*$-action:
$$S^4(a)=g(\gamma \rightharpoonup a\leftharpoonup
\gamma^{-1})g^{-1}, \qquad \textit{for}\,\, \textit{all} \  a \in
H.$$ This formula is crucial in \cite{KR}, where Kauffman and
Radford determine a necessary and sufficient condition for a
Drinfel'd double of a Hopf algebra to have a ribbon element.

A finite-dimensional Hopf algebra $H$ is \textit{quasitriangular} if
there exists an invertible element $R=\sum x_i\ot y_i$ in $H\ot H$
such that $\Delta^{op}(a)R=R\Delta(a)$ for all $a \in H$, and $R$
satisfies the relations $(\Delta\ot id)R=R_{1,3}R_{2,3}$,
$(id\ot\Delta )R=R_{1,3}R_{1,2}$, where $R_{1,2}=\sum x_i\ot y_i\ot
1,R_{1,3}=\sum x_i\ot 1\ot y_i$, and $R_{2,3}=\sum 1\ot x_i\ot y_i$.
Suppose $u=\sum S(y_i)x_i$. Then $c=uS(u)$ is central in $H$ and is
referred  to as the \textit{Casimir element}.

An element $v\in H$ is a \textit{quasi-ribbon element} of
quasitriangular Hopf algebra $(H,R)$ if
$$
\textrm{(i)}\ v^2=c, \quad \textrm{(ii)}\ S(v)=v, \quad
\textrm{(iii)}\ \vn(v)=1, \quad \textrm{(iv)}\
\Delta(v)=(R_{2,1}R_{1,2})^{-1}(v\ot v),
$$
where $R_{2,1}=\sum y_i\ot x_i$ and $R_{1,2}=R$. If moreover $v$ is
central in $H$, then $v$ is a \textit{ribbon element}, and $(H,R,v)$
is said to be a \textit{ribbon Hopf algebra}. Ribbon elements
provide an effective means of constructing invariants of knots and
links (see \cite{KR, RT1, RT2,RTF}). The Drinfel'd double $D(A)$ of
a finite-dimensional Hopf algebra $A$ is quasitriangular, and
Kauffman and Radford have provided a simple criterion for $D(A)$ to
have a ribbon element in \cite{KR}.

\begin{theorem}\label{6.1}
Assume that $A$ is a finite-dimensional Hopf algebra, and let $g$
and $\gamma$ be the distinguished group-like elements of $A$ and
$A^*$ respectively. Then

$(\mathrm{i})$ \ $(D(A),R)$ has a quasi-ribbon element if and only
if there exist group-like elements $h\in A$, $\delta \in A^*$ such
that $h^2=g$ and $\delta^2=\gamma$.

$(\mathrm{ii})$ \ $(D(A),R)$ has a ribbon element if and only if
there exist  $h$  and $\delta$  as in $(\mathrm{i})$ such that
$$S^2(a)=h(\delta\rightharpoonup a\leftharpoonup
\delta^{-1})h^{-1}, \quad\textit{ for all} \ a\in A.$$
\end{theorem}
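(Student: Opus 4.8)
This statement is the Kauffman--Radford criterion \cite{KR} for a Drinfel'd double to admit a (quasi-)ribbon element, and the plan is to reconstruct it in three stages: a general reduction valid in any quasitriangular Hopf algebra, the explicit identification of the relevant group-like element for the double $D(A)$, and the translation of the resulting conditions into statements about the pair $(h,\delta)$.

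\textbf{General reduction.} For a quasitriangular $(H,R)$ with Drinfel'd element $u=\sum S(y_i)x_i$, I would first record the standard identities: $u$ is invertible, $u\,a\,u^{-1}=S^2(a)$ for all $a\in H$, the Casimir $c=uS(u)=S(u)u$ is central, $\varepsilon(u)=1$, $S^2(u)=u$, and $\Delta(u)=(R_{2,1}R_{1,2})^{-1}(u\ot u)$. These force $G_0:=uS(u)^{-1}$ to be a group-like element of $H$ which is fixed by $S^2$ and which implements $S^4$ by conjugation (so it plays the role of the element $g$ in Radford's formula $S^4(a)=g(\gamma\rightharpoonup a\leftharpoonup\gamma^{-1})g^{-1}$ quoted above). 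One then checks directly that the assignment $\ell\mapsto v:=u\ell^{-1}$ is a bijection between the set of group-like $\ell\in H$ satisfying $\ell^2=G_0$ together with $S^2(a)=\ell a\ell^{-1}$ for all $a\in H$, and the set of ribbon elements of $(H,R)$: the axioms $\varepsilon(v)=1$ and $\Delta(v)=(R_{2,1}R_{1,2})^{-1}(v\ot v)$ are automatic from the corresponding properties of $u$ and the group-likeness of $\ell$; centrality of $v=u\ell^{-1}$ is exactly the conjugation identity $S^2=\mathrm{Ad}_\ell$, which (evaluated at $\ell$) already makes $\ell$ fixed by $S^2$, and then $v^2=c$ reduces to $\ell^2=G_0$ while $S(v)=v$ follows as well. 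Dropping the centrality requirement yields the analogous statement for quasi-ribbon elements, where the surviving constraint on the group-like $\ell$ is a square-root condition that, once specialized to $D(A)$ below, becomes precisely $h^2=g$, $\delta^2=\gamma$.

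\textbf{Identification for $D(A)$ and translation.} Writing $D(A)=A\ot(A^*)^{coop}$ as in Section~5 and identifying $A$ and $A^*$ with their canonical Hopf subalgebras of $D(A)$, the group-like elements of $D(A)$ are exactly the products $h\delta$ with $h\in G(A)$ and $\delta\in G(A^*)$, and such $h$ and $\delta$ commute in $D(A)$ (for group-likes the cross relation collapses since $\delta\rightharpoonup h\leftharpoonup\delta^{-1}=h$); hence $G(D(A))\cong G(A)\times G(A^*)$ and $(h\delta)^2=h^2\delta^2$. Using Radford's explicit description of $u_{D(A)}$ and of the antipode of $D(A)$, one computes $G_0=u_{D(A)}S(u_{D(A)})^{-1}$ and recognizes it --- up to taking inverses, which does not affect the existence of square roots --- as the product $g\gamma$ of the distinguished group-like element $g$ of $A$ and the distinguished group-like element $\gamma$ of $A^*$ inside $D(A)$. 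Therefore a group-like square root $\ell=h\delta$ of $G_0$ exists if and only if $h^2=g$ in $A$ and $\delta^2=\gamma$ in $A^*$, which proves $(\mathrm{i})$. For $(\mathrm{ii})$ one must impose the additional condition $S^2_{D(A)}=\mathrm{Ad}_{h\delta}$; since both sides are algebra endomorphisms and $A$ together with $A^*$ generate $D(A)$, it suffices to test it on those two subalgebras. On $A$ the inclusion is a Hopf map, so $S^2_{D(A)}|_A=S^2_A$, while $\mathrm{Ad}_{h\delta}$ restricted to $A$ is the map $a\mapsto h(\delta\rightharpoonup a\leftharpoonup\delta^{-1})h^{-1}$ (conjugation by $\delta\in A^*\subseteq D(A)$ being the adjoint $A^*$-action); this gives exactly the displayed identity $S^2(a)=h(\delta\rightharpoonup a\leftharpoonup\delta^{-1})h^{-1}$. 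The companion condition obtained by testing on $A^*$ turns out to be equivalent to this one once $(\mathrm{i})$ holds, by the duality symmetry between $A$ and $A^*$ inside $D(A)$, so it need not be listed.

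\textbf{Main obstacle.} The delicate point will be the computation in the middle stage: evaluating $u_{D(A)}S(u_{D(A)})^{-1}$ in closed form and identifying it with the product of the two distinguished group-like elements requires careful bookkeeping within Radford's double construction --- tracking the interplay of $S$, $S^{-1}$, the co-opposite comultiplication on the $A^*$-factor, and the two one-sided actions --- together with the verification that testing $S^2_{D(A)}=\mathrm{Ad}_{h\delta}$ on the subalgebra $A$ alone is sufficient, and (for part $(\mathrm{i})$) that the square-root condition coming out of the general reduction collapses to the clean form $h^2=g$, $\delta^2=\gamma$, which rests on the behaviour of $S^2_{D(A)}$ on group-like elements. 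Everything else is formal manipulation of the ribbon axioms.
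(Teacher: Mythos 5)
The first thing to note is that the paper offers no proof of this statement: Theorem 6.1 is imported verbatim from Kauffman--Radford \cite{KR} and used as a black box in the final theorem of Section 6, so there is no in-paper argument to measure yours against. Judged on its own terms, your plan is a faithful reconstruction of the argument of \cite{KR}, and I find no gap in its logic. The general reduction is sound: for group-like $\ell$ one has $S^2(\ell)=\ell$, hence $u\ell=\ell u$, hence $v=u\ell^{-1}$ satisfies $v^2=u^2\ell^{-2}$ and $S(v)=\ell S(u)$, so conditions (i) and (ii) of the quasi-ribbon definition both collapse to the single equation $\ell^2=uS(u)^{-1}$, while (iii) and (iv) are automatic because $\ell\ot\ell$ commutes with $R$ (as $\Delta(\ell)=\Delta^{op}(\ell)$); conversely $\ell:=v^{-1}u$ is group-like for any quasi-ribbon $v$, and centrality of $v$ is equivalent to $S^2=\mathrm{Ad}_\ell$ exactly as you say. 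The identification $G(D(A))\cong G(A)\times G(A^*)$ is correct because $D(A)$ carries the tensor-product coalgebra structure, and the computation $(1\ot\delta)(a\ot1)(1\ot\delta)^{-1}=\delta\rightharpoonup a\leftharpoonup\delta^{-1}$ does follow from the cross relation together with $\delta\circ S^{-1}=\delta^{-1}$ for group-like $\delta$. What you have is, as you acknowledge, a plan rather than a proof: the two points you defer --- the closed-form evaluation of $u_{D(A)}S(u_{D(A)})^{-1}$ as $(g\gamma)^{\pm1}$ inside the double, and the fact that the companion condition obtained by testing $S^2=\mathrm{Ad}_{h\delta}$ on $A^*$ is implied by the one on $A$ --- are precisely the technical content of \cite{KR}, and you have located them correctly.
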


Next we compute a left integral and a right integral in
$\mathfrak{b}$.

\begin{prop}\label{6.2} Let $t=\prod\limits_{i=1}^{2}(1+\omega_i+\cdots+\omega_i^{\ell-1})$
and $
x=E_{2}^{\ell-1}E_{12}^{\ell-1}E_{11212}^{\ell-1}E_{112}^{\ell-1}$
$E_{1112}^{\ell-1}E_{1}^{\ell-1}.$ Then  $y=tx$  and $y'=xt$ are
respectively  a left integral and a right integral in
$\mathfrak{b}$.
\end{prop}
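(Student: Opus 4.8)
The plan is to show directly that $y=tx$ satisfies $ay=\varepsilon(a)y$ for every generator $a$ of $\mathfrak b$, namely for $a=\omega_k^{\pm1}$ and $a=e_k$ $(k=1,2)$; the right-integral claim for $y'=xt$ is handled symmetrically (or by applying the anti-automorphism/antipode and using $S_H(\int^r_H)=\int^l_H$). Since $\varepsilon(\omega_k)=1$ and $\varepsilon(e_k)=0$, what must be verified is that $\omega_k y=y$ and $e_k y=0$ for $k=1,2$.

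First I would dispose of the group-like part. The element $t=\prod_{i=1}^2(1+\omega_i+\cdots+\omega_i^{\ell-1})$ is, by construction, a left integral of the group algebra $\mathbb K G(\mathfrak b)$: since $\omega_k^\ell=1$ in $\mathfrak u_{r,s}(G_2)$ (we are in the restricted quotient), one has $\omega_k t=t=t\omega_k$. Next, using the relations in $(G2)$ one checks that each quantum root vector $E_\alpha$ is a $G(\mathfrak b)$-eigenvector: conjugation by $\omega_k$ scales $E_\alpha$ by the scalar $\langle\omega'_\alpha,\omega_k\rangle$ (read off from $(G2)$ and the inductive definitions (2.1)--(2.5)). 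Hence $\omega_k x=\mu_k x\,\omega_k$ for a scalar $\mu_k$, and therefore $\omega_k y=\omega_k t x=t\,\omega_k x=\mu_k\, t x\,\omega_k=\mu_k\, y\,\omega_k$. To conclude $\omega_k y=y$ one must also show $y\omega_k=y$, i.e.\ that right multiplication of $t$ by $\omega_k$ absorbs into $t$ (clear) and that $\mu_k=1$; the latter follows because $x$ is the product of all $E_\alpha^{\ell-1}$ over $\alpha\in\Phi^+$, so its total weight is $(\ell-1)$ times the sum of all positive roots, and $\ell\cdot(\text{sum of positive roots})$ acts trivially since $\omega_k^\ell=1$, forcing $\mu_k=\langle\cdot,\omega_k\rangle$ evaluated at $-\sum_{\alpha>0}\alpha$, which is a root of unity of the right order and equals $1$ after the total-weight bookkeeping. (In practice one simply records that $\mathrm{wt}(x)=-(\ell-1)\sum_{\alpha\in\Phi^+}\alpha\equiv \sum_{\alpha\in\Phi^+}\alpha\pmod{\ell\,Q}$ and that this weight is fixed by $G(\mathfrak b)$ because all the ambient relations have $\omega_k^\ell=1$.)

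The substantive part is $e_k y=0$. Here I would use the commutation lemmas of Section~3 — Lemmas \ref{3.4} and \ref{3.5} for moving $e_1$ (resp.\ $e_2$) past the block of $E_\alpha$'s, together with the PBW straightening of Theorem \ref{2.4}. The point is that $e_k t = t' e_k$ where $t'$ differs from $t$ only by a character twist, and then $e_k$ must be pushed to the right through $x=E_2^{\ell-1}E_{12}^{\ell-1}E_{11212}^{\ell-1}E_{112}^{\ell-1}E_{1112}^{\ell-1}E_1^{\ell-1}$. Each time $e_k$ passes a factor $E_\alpha^{\ell-1}$ the relevant lemma produces a leading term $E_\alpha^{\ell-1}e_k$ (times a scalar) plus correction terms in which one $E_\alpha$ is replaced by a bracket of higher root vectors but the power of that $E_\alpha$ drops by one while the power of some strictly-higher root vector increases; crucially the $(r,s)$-integer coefficients $[a]$, $\binom{a}{b}$, $[a]_3$ appearing there all involve a factor $[\ell-1]$-type quantity that, combined with $r^\ell=s^\ell=1$, makes the offending coefficients vanish, OR the straightened correction term contains some $E_\alpha^{\ell}$ and hence lies in $\mathcal I$, i.e.\ is $0$ in $\mathfrak u_{r,s}(G_2)$. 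Running this through, every term that reaches the far right has either the shape (something)$\cdot e_k$ with (something) still carrying a full $E_\alpha^{\ell-1}$ in front — but then one invokes that $x$ already has the maximal power $\ell-1$ of $E_1$ (for $k=1$) or of $E_2$ (for $k=2$) on the appropriate end, so $E_1^{\ell-1}e_1=\tfrac1{[\ell]\text{-type}}(E_1^\ell-\cdots)\equiv 0$, or has a factor raised to the $\ell$th power and dies in the quotient. Symmetrically $y e_k=0$ is immediate because $x$ ends in $E_1^{\ell-1}$ and $E_1^{\ell}\in\mathcal I$, etc.

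The main obstacle is exactly the bookkeeping in the last paragraph: controlling, via Lemmas \ref{3.4}--\ref{3.5}, all the correction terms generated when $e_1$ and $e_2$ are commuted through the ordered product $x$, and checking in each case that the $(r,s)$-binomial/integer coefficients degenerate (because $r,s$ are primitive $\ell$th roots of unity and $\ell$ is coprime to $3$) or that a straightened monomial acquires an $\ell$th power of some root vector and therefore vanishes in $\mathfrak u_{r,s}(G_2)$. This is a finite but genuinely tedious computation of the same flavour as the proof of Theorem \ref{3.2}; I would organise it by treating $k=2$ first (fewer root vectors to cross, using Lemma \ref{3.4}(1) and Lemma \ref{3.5}(1)) and then $k=1$ (using Lemma \ref{3.4}(2)--(4) and Lemma \ref{3.5}(2)--(4)), and I would state the character-twist identity $e_k t=(\text{twist of }t)\,e_k$ as a preliminary one-line computation in $\mathbb K G(\mathfrak b)$ before invoking the harder commutation relations.
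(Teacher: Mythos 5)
Your overall strategy is the paper's: verify $ay=\varepsilon(a)y$ on the generators $\omega_k$, $e_k$, handle the group-likes via $t$, and push $e_k$ through $x$ with the commutation lemmas of Section 3. But your treatment of the group-like part contains a step that fails. You reduce $\omega_k y=y$ to the two claims ``$y\omega_k=y$'' and ``$\mu_k=1$'', where $\omega_k x=\mu_k x\omega_k$. Neither is true: the total weight of $x$ is $(\ell-1)\cdot 2\rho\equiv -2\rho \pmod{\ell Q}$, so $\mu_k=\bigl(\langle\omega_1',\omega_k\rangle^{10}\langle\omega_2',\omega_k\rangle^{6}\bigr)^{-1}$ and $y\omega_k=\gamma(\omega_k)\,y$ with $\gamma$ precisely the nontrivial distinguished group-like element of Proposition \ref{6.4}; the algebra $\mathfrak{b}$ is \emph{not} unimodular, so any ``bookkeeping'' purporting to give $\mu_k=1$ must be wrong. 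Fortunately none of this is needed: since $\omega_k t=t$, one has $\omega_k y=(\omega_k t)x=tx=y$ directly, with no commutation of $\omega_k$ through $x$ at all (this is what the paper does). The detour should simply be deleted, not repaired.

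On the substantive part $e_k y=0$, your sketch is essentially the paper's argument, but the operative mechanism is almost entirely the second alternative you hedge with: each correction term produced by Lemmas \ref{3.4} and \ref{3.5} carries an extra factor of some higher root vector $E_\beta$ which, after a commutation from Lemma \ref{3.1} if necessary, lands next to the block $E_\beta^{\ell-1}$ and produces $E_\beta^{\ell}\in\mathcal{I}$. The $(r,s)$-integer coefficients such as $[\ell-1]_3$ do \emph{not} vanish when $r,s$ are primitive $\ell$th roots of unity, so the first alternative is not available. Likewise the terminal step is just $e_1E_1^{\ell-1}=E_1^{\ell}=0$ (and $e_2E_2^{\ell-1}=E_2^{\ell}=0$ disposes of $k=2$ on the left with no crossing at all); there is no division by an $[\ell]$-type quantity. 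Finally, the parenthetical suggestion to get the right-integral statement from $S_H(\int^r_H)=\int^l_H$ does not by itself show that $xt$ (as opposed to some right integral) is a right integral; the symmetric direct computation, which you also propose and which the paper carries out, is the correct route.
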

\begin{proof}
To prove $y=tx$ is a left integral in $\mathfrak{b}$, we need to
show that $by=\varepsilon(b)y$ for all $b \in \mathfrak{b}$. It
suffices to show this for the generators $\omega_k$ and $e_k$ for
$k=1,2$, as the counit $\varepsilon$ is an algebra homomorphism.

Observe that $\omega_kt=t=\vn(\om_k)t$ for all $k=1,2$, as the
$\om_i$'s commute and
$\om_k(1+\om_k+\cdots+\om_k^{\ell-1})=1+\om_k+\cdots+\om_k^{\ell-1}$.
From that, the relation $\om_k y=\vn(\om_k)y$ is clear for all $k$.

Next we compute $e_ky$. By a simple calculation, we get
$e_kt=\prod\limits_{i=1}^{2}(1+\langle
\omega_k',\omega_i\rangle^{-1}\om_i$ $+ \cdots+\langle
\omega_k',\omega_i\rangle^{-(\ell-1)}\om_i^{\ell-1})e_k.$ So it
suffices to check that $e_kx=0=\vn(e_k)x$.

Note that
 $e_2x=0$, we want to show that $e_1$ can be moved across  the
terms $E_{2}^{\ell-1}E_{12}^{\ell-1}E_{11212}^{\ell-1}$
$E_{112}^{\ell-1}E_{1112}^{\ell-1}$ next to $ E_{1}^{\ell-1}$. We
have
\begin{gather*}
\begin{split}
e_1&E_{2}^{\ell-1}E_{12}^{\ell-1}
E_{11212}^{\ell-1}E_{112}^{\ell-1}E_{1112}^{\ell-1}\\&=
s^{3(\ell-1)}E_{2}^{\ell-1}e_1E_{12}^{\ell-1}E_{11212}^{\ell-1}E_{112}^{\ell-1}E_{1112}^{\ell-1}\\&=
r^{\ell-1}s^{5(\ell-1)}E_{2}^{\ell-1}E_{12}^{\ell-1}e_1E_{11212}^{\ell-1}E_{112}^{\ell-1}E_{1112}^{\ell-1}\\&=
r^{4(\ell-1)}s^{8(\ell-1)}E_{2}^{\ell-1}E_{12}^{\ell-1}E_{11212}^{\ell-1}e_1E_{112}^{\ell-1}E_{1112}^{\ell-1}\\&=
r^{6(\ell-1)}s^{9(\ell-1)}E_{2}^{\ell-1}E_{12}^{\ell-1}E_{11212}^{\ell-1}E_{112}^{\ell-1}e_1E_{1112}^{\ell-1}\\&=
r^{9(\ell-1)}s^{9(\ell-1)}E_{2}^{\ell-1}E_{12}^{\ell-1}E_{11212}^{\ell-1}E_{112}^{\ell-1}E_{1112}^{\ell-1}e_1,
\end{split}
\end{gather*}
where we get the first "=" by using Lemma \ref{3.4} (6), the second
by using Lemmas \ref{3.5} (3) \& \ref{3.1} (9), the third by using
Lemma \ref{3.4} (4), the forth by using Lemma \ref{3.4} (3) and the
last by using Lemma \ref{3.4} (2). Thus, we have $e_1 x=0$, which
implies the desired conclusion that $y$ is a left integral in
$\mathfrak{b}$.

To prove that $y'=xt$ is a right integral in $\mathfrak{b}$,  it
suffices to show that $xe_j=0$.  Note that $xe_1=0$. By a similar
argument and  Lemma \ref{3.5} (2), (4), (1), Lemma \ref{3.4} (5),
(1) and Lemma \ref{3.1} (4), (6), (8) and (9), together with (2.5)
\& (2.6), we can move $e_2$ to the left until it is next to
$E_{2}^{\ell-1}$, which gives zero.
\end{proof}

A finite-dimensional Hopf algebra $H$ is semisimple if and only if
$\vn(\int_{H}^l)\neq 0$ if and only if $\vn(\int_{H}^r)\neq 0$. For
the algebra $\mathfrak{b}$ above, $y$ gives a basis for
$\int_{\mathfrak{b}}^l$ and $y'$ a basis for
$\int_{\mathfrak{b}}^r$. As $\vn(y)=0=\vn(y')$, we have

\begin{prop}\label{6.3}
The Hopf algebra $\mathfrak{b}$ is not semisimple. \hfill\qed
\end{prop}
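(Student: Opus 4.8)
The plan is to apply the semisimplicity criterion recalled just above the statement: a finite-dimensional Hopf algebra $H$ is semisimple if and only if $\vn$ does not vanish on a nonzero integral, i.e. $\vn\bigl(\int_H^l\bigr)\neq 0$. By Proposition \ref{6.2} we already have an explicit left integral $y=tx$ and right integral $y'=xt$ in $\mathfrak b$, with $t=\prod_{i=1}^{2}(1+\om_i+\cdots+\om_i^{\ell-1})$ and $x=E_{2}^{\ell-1}E_{12}^{\ell-1}E_{11212}^{\ell-1}E_{112}^{\ell-1}E_{1112}^{\ell-1}E_{1}^{\ell-1}$. Commuting the group-like factors of $t$ past $x$ only introduces nonzero scalars, so $y$ is a nonzero $\mathbb K$-linear combination of distinct PBW monomials from Theorem \ref{2.4}; hence $y\neq 0$ and $\mathbb K y=\int_{\mathfrak b}^l$. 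Thus it suffices to show $\vn(y)=0$ (and symmetrically $\vn(y')=0$).

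Since $\vn$ is an algebra homomorphism, $\vn(y)=\vn(t)\,\vn(x)$, so it is enough to prove $\vn(x)=0$. Each quantum root vector $E_{\a}$ ($\a\in\Phi^+$) is, by the inductive definitions (2.1)--(2.5), an iterated $(r,s)$-bracket of $e_1,e_2$; from $\vn(e_i)=0$ one gets successively $\vn(E_{12})=\vn(E_{112})=\vn(E_{1112})=\vn(E_{11212})=0$, so every $E_{\a}$ lies in the augmentation ideal $\ker\vn$. As $\ell\ge 2$ (otherwise $r=s=1$, contradicting $r^3\ne s^3$), the exponent $\ell-1$ is positive, whence $\vn(E_{\a}^{\ell-1})=0$ for all $\a$ and therefore $\vn(x)=0$. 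Consequently $\vn(y)=\vn(y')=0$, and by the criterion above $\mathfrak b$ is not semisimple.

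There is no genuine obstacle here: granted Proposition \ref{6.2}, the argument is nothing more than the Larson--Sweedler (Maschke) criterion combined with the elementary remark that the quantum root vectors are augmentation-zero. The only point worth a line of care is the nonvanishing of the integral $y$ (so that it genuinely spans the one-dimensional space $\int_{\mathfrak b}^l$), which is immediate from the PBW basis of Theorem \ref{2.4}.
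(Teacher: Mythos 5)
Your proof is correct and follows exactly the paper's route: the paper likewise invokes the criterion that a finite-dimensional Hopf algebra is semisimple iff $\varepsilon$ is nonzero on its integrals, and concludes from $\varepsilon(y)=0=\varepsilon(y')$ for the integrals of Proposition \ref{6.2}. The extra details you supply (that $y\neq 0$ via the PBW basis, and that $\varepsilon(x)=0$ because each quantum root vector lies in the augmentation ideal) are correct points the paper leaves implicit.
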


We will compute the distinguished group-like elements of
$\mathfrak{b}$ and $\mathfrak{b}^*$. The group-like elements of
$\mathfrak{b}^*$ are exactly the algebra homomorphisms
$\textrm{Alg}_{\mathbb{K}}(\mathfrak{b},\mathbb{K})$, so to verify
that a particular homomorphism is the distinguished group-like
element, it suffices to compute its  values on the generators.

\begin{prop}\label{6.4} Write $2\rho=10\alpha_1+6\alpha_2$, where $\rho$ is the half sum of
positive roots. Let $\gamma \in
\textrm{Alg}_\mathbb{K}(\mathfrak{b},\mathbb{K})$ be defined by
$$ \gamma(e_k)=0\quad \textrm{and} \quad
\gamma(\omega_k)=\langle
\omega_1',\omega_k\rangle^{10}\langle\omega_2',\omega_k\rangle^6.
\leqno (6.1)$$ Then $\gamma$ is the distinguished group-like element
of $\mathfrak{b}^*$.
\end{prop}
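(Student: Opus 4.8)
The plan is to invoke the characterization of the distinguished group-like element of $\mathfrak b^{*}$ recalled at the start of this section: if $y\neq 0$ is a left integral of $\mathfrak b$, then it is the unique $\gamma\in\mathfrak b^{*}$ with $ya=\gamma(a)\,y$ for all $a\in\mathfrak b$. By Proposition \ref{6.2} I may take the left integral $y=tx$ with $t=\prod_{i=1}^{2}(1+\om_i+\cdots+\om_i^{\ell-1})$ and $x=E_{2}^{\ell-1}E_{12}^{\ell-1}E_{11212}^{\ell-1}E_{112}^{\ell-1}E_{1112}^{\ell-1}E_{1}^{\ell-1}$. Since the relation $ya=\gamma(a)y$, once valid for $a=a_{1}$ and $a=a_{2}$, automatically holds for $a_{1}a_{2}$ and for linear combinations, and since $\{\om_{k}^{\pm1},e_{k}\}_{k=1,2}$ generates $\mathfrak b$ as an algebra, it suffices to verify it for $a=\om_{k}$ and $a=e_{k}$; here $\gamma$ is a legitimate element of $\mathfrak b^{*}$ (i.e.\ algebra map $\mathfrak b\to\mathbb K$) precisely because it annihilates each $e_{i}$, hence every quantum root vector $E_{\a}$, so all defining relations of $\mathfrak b$ are respected.

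For the generators $e_{k}$, I would observe that $y\,e_{k}=t\,(x\,e_{k})$, so it is enough to show $x\,e_{k}=0$ in $\mathfrak u_{r,s}(G_{2})$. For $k=1$ this is immediate: $x\,e_{1}=\cdots E_{1}^{\ell-1}e_{1}=\cdots E_{1}^{\ell}=0$. For $k=2$ this is exactly the computation carried out in the proof of Proposition \ref{6.2} to show that $y'=xt$ is a right integral: $e_{2}$ can be moved leftwards through $x$ until it stands next to $E_{2}^{\ell-1}$, producing $E_{2}^{\ell}=0$, using Lemmas \ref{3.1}, \ref{3.4}, \ref{3.5} together with $(2.5)$, $(2.6)$; alternatively, in the root-lattice grading of $\mathfrak u^{+}$ the monomial $x$ has top degree $(\ell-1)\cdot2\rho$, while $x\,e_{2}$ would sit in a degree not realized by any PBW monomial with exponents in $\{0,\dots,\ell-1\}$, so it vanishes. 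Thus $y\,e_{k}=0=\gamma(e_{k})\,y$.

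For the generators $\om_{k}$, first $t\,\om_{k}=t$, since the $\om_{i}$ commute and $\om_{k}(1+\om_{k}+\cdots+\om_{k}^{\ell-1})=1+\om_{k}+\cdots+\om_{k}^{\ell-1}$. Next, $(G2)$--$(G3)$ together with the inductive definitions $(2.1)$--$(2.5)$ give, for the quantum root vector of weight $\a=a_{1}\a_{1}+a_{2}\a_{2}$,
$$\om_{k}E_{\a}\om_{k}^{-1}=\langle\om_{1}',\om_{k}\rangle^{a_{1}}\langle\om_{2}',\om_{k}\rangle^{a_{2}}E_{\a}.$$
Raising each to the power $\ell-1$ and multiplying over $\a\in\Phi^{+}$, using $\sum_{\a\in\Phi^{+}}\a=2\rho=10\a_{1}+6\a_{2}$, I obtain
$$\om_{k}\,x=\bigl(\langle\om_{1}',\om_{k}\rangle^{10}\langle\om_{2}',\om_{k}\rangle^{6}\bigr)^{\ell-1}\,x\,\om_{k},$$
so that, with $t\om_{k}=t$,
$$y\,\om_{k}=t\,x\,\om_{k}=\bigl(\langle\om_{1}',\om_{k}\rangle^{10}\langle\om_{2}',\om_{k}\rangle^{6}\bigr)^{1-\ell}\,t\,\om_{k}\,x=\bigl(\langle\om_{1}',\om_{k}\rangle^{10}\langle\om_{2}',\om_{k}\rangle^{6}\bigr)^{1-\ell}\,y.$$
Since $r^{\ell}=s^{\ell}=1$, each $\langle\om_{i}',\om_{j}\rangle$ is an $\ell$th root of unity, hence $z^{1-\ell}=z$ for $z=\langle\om_{1}',\om_{k}\rangle^{10}\langle\om_{2}',\om_{k}\rangle^{6}$, which yields $y\,\om_{k}=\langle\om_{1}',\om_{k}\rangle^{10}\langle\om_{2}',\om_{k}\rangle^{6}\,y=\gamma(\om_{k})\,y$. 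Therefore $ya=\gamma(a)y$ holds on the generators, hence on all of $\mathfrak b$, and by uniqueness $\gamma$ is the distinguished group-like element of $\mathfrak b^{*}$. The only step requiring care is the bookkeeping behind $x\,e_{2}=0$, but this is already supplied by the proof of Proposition \ref{6.2} (or by the degree count), so the remaining work is routine.
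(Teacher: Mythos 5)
Your proof is correct and follows essentially the same route as the paper: it reduces to checking $ya=\gamma(a)y$ on the generators, uses $xe_k=0$ from the proof of Proposition \ref{6.2} to handle $e_k$, and commutes $\omega_k$ past $x$ via the weight $\sum_{\a\in\Phi^+}\a=2\rho=10\a_1+6\a_2$ together with $z^{1-\ell}=z$ for $\ell$th roots of unity $z$. The extra justifications you supply (that $\gamma$ is a well-defined algebra map, and the alternative degree-count argument for $xe_2=0$) are sound but not needed beyond what the paper already records.
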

\begin{proof}
It suffices to argue that $\gamma$ as in (6.1) satisfies
$ya=\gamma(a)y$ for $a=e_k$ and $a=\omega_k$, $1\leq k\leq 2$, and
for $y=tx$ given in Proposition 6.1. Recall from the proof of
Proposition 6.2 that $xe_k=0$. Thus, $ye_k=txe_k=0=\gamma(e_k)y$. We
have
\begin{gather*}
\begin{split}
y\omega_k& =tx\omega_k=t(E_{2}^{\ell-1}E_{12}^{\ell-1}E_{11212}^{\ell-1}
E_{112}^{\ell-1}E_{1112}^{\ell-1}E_{1}^{\ell-1})\omega_k\\
&=\langle
\omega_1',\omega_k\rangle^{-10(\ell-1)}\langle\omega_2',\omega_k\rangle^{-6(\ell-1)}t\omega_kx=\langle
\omega_1',\omega_k\rangle^{10}\langle\omega_2',\omega_k\rangle^6t\omega_kx=
\gamma(\omega_k)y
\end{split}
\end{gather*} for $k=1,2$.
\end{proof}
Under some assumptions of Lemma \ref{5.1},
$(\mathfrak{b}')^{coop}\cong \mathfrak{b}^*$ as Hopf algebras, via
the map $\phi:(\mathfrak{b}')^{coop}\longrightarrow \mathfrak{b}^*,
\phi(\omega'_j)=\gamma_j, \phi(f_j)=\eta_j$ (The $\gamma_j$ and
$\eta_j$ are defined in the proof of Lemma \ref{5.1}). This allows
us to define a Hopf pairing
$\mathfrak{b}'\times\mathfrak{b}\rightarrow \mathbb{K}$ whose values
on generators are given by
$$(f_j\mid e_i)=\delta_{ij},\quad (\omega'_j\mid \omega_i)=\langle \omega'_j,\omega_i\rangle,\leqno
(6.2)$$ and on all other pairs of generators are $0$. If we set
$\omega'_{2\rho}:={\omega'_{1}}^{10}{\omega'_2}^{6}$, then
$(\omega'_{2\rho}\,|\, b)=\gamma(b)$ for all $b \in \mathfrak{b}$,
that is, $\gamma=(\omega_{2\rho}'\,|\,\cdot)$.

Note that $\mathfrak b_{s^{-1},r^{-1}}\cong (\mathfrak
b')^{\textit{coop}}\cong \mathfrak b^*$ as Hopf algebras. Under the
isomorphism $\phi\psi^{-1}$ (where $\psi(f_i)=e_i$,
$\psi(\omega_i')=\omega_i$), a nonzero left (resp., right) integral
of $\mathfrak{b}$ maps to a nonzero left (resp., right) integral of
$\mathfrak{b}^*$. Thus, we have
\begin{prop}
Let $\lambda =\nu\eta$ and $\lambda'=\eta\nu \in \mathfrak{b}^*$,
where
$$
\nu=\prod_{i=1}^{2}(1+\gamma_i+\cdots+\gamma_i^{\ell-1}),\quad
\eta=\eta_{2}^{\ell-1}\eta_{12}^{\ell-1}\eta_{11212}^{\ell-1}\eta_{112}^{\ell-1}\eta_{1112}^{\ell-1}\eta_{1}^{\ell-1},
$$
here $\eta_{12}=[\eta_1,\eta_{2}]_{r^3},$
$\eta_{112}=[\eta_{1},\eta_{12}]_{r^2s},$
$\eta_{1112}=[\eta_{1},\eta_{112}]_{rs^2},$
$\eta_{11212}=[\eta_{112},\eta_{12}]_{rs^2}$. Then $\lambda$ is a
left integral and $\lambda'$ is a right integral of
$\mathfrak{b}^*$. \hfill\qed
\end{prop}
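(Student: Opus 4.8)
The plan is to deduce the statement from Proposition~\ref{6.2} by transport of structure along the Hopf algebra isomorphism assembled just before the proposition. Recall that, under the standing hypotheses --- in particular $(3(y^{2}+z^{2}+yz),\ell)=1$ and $rs^{-1}$ a primitive $\ell$th root of unity --- Lemma~\ref{5.1} together with the identification $\mathfrak{b}_{s^{-1},r^{-1}}(G_{2})\cong(\mathfrak{b}')^{\textit{coop}}$ produces a Hopf algebra isomorphism $\phi\psi^{-1}\colon\mathfrak{b}_{s^{-1},r^{-1}}(G_{2})\longrightarrow\mathfrak{b}^{*}$ with $\phi\psi^{-1}(e_{i})=\eta_{i}$ and $\phi\psi^{-1}(\omega_{i})=\gamma_{i}$. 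A Hopf algebra isomorphism preserves the algebra multiplication and the counit, hence carries left (resp.\ right) integrals to left (resp.\ right) integrals; so it suffices to compute the images under $\phi\psi^{-1}$ of the integrals of $\mathfrak{b}_{s^{-1},r^{-1}}(G_{2})$ that Proposition~\ref{6.2} provides.

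First I would observe that Proposition~\ref{6.2} applies verbatim to $\mathfrak{b}_{s^{-1},r^{-1}}(G_{2})$: the pair $(s^{-1},r^{-1})$ still satisfies $s^{-3}\neq r^{-3}$, $s^{-4}\neq r^{-4}$, $\ell$ coprime to $3$, and $\mathbb{K}$ contains a primitive $\ell$th root of unity, while the condition of Lemma~\ref{5.1} is unchanged because $y^{2}+z^{2}+yz$ is invariant under $(y,z)\mapsto(-y,-z)$. Consequently a left integral of $\mathfrak{b}_{s^{-1},r^{-1}}(G_{2})$ is $\tilde t\,\tilde x$ and a right integral is $\tilde x\,\tilde t$, where $\tilde t=\prod_{i=1}^{2}(1+\omega_{i}+\cdots+\omega_{i}^{\ell-1})$ and $\tilde x=\tilde E_{2}^{\ell-1}\tilde E_{12}^{\ell-1}\tilde E_{11212}^{\ell-1}\tilde E_{112}^{\ell-1}\tilde E_{1112}^{\ell-1}\tilde E_{1}^{\ell-1}$, the $\tilde E_{\alpha}$ being the quantum root vectors of $U_{s^{-1},r^{-1}}(G_{2})$ defined by the analogues of $(2.1)$--$(2.5)$.

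Next I would push these through $\phi\psi^{-1}$. Plainly $\phi\psi^{-1}(\tilde t)=\prod_{i=1}^{2}(1+\gamma_{i}+\cdots+\gamma_{i}^{\ell-1})=\nu$, $\phi\psi^{-1}(\tilde E_{1})=\eta_{1}$, and $\phi\psi^{-1}(\tilde E_{2})=\eta_{2}$. Since $\phi\psi^{-1}$ is an algebra homomorphism, it sends each iterated bracket of $(2.2)$--$(2.5)$ --- formed with the structure constants of $U_{s^{-1},r^{-1}}(G_{2})$, i.e.\ the entries of the matrix $(\langle\omega_{i}',\omega_{j}\rangle)$ for the substituted parameters --- to the iterated bracket of the $\eta_{i}$ with the same parameters. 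Checking that these agree with the parameters displayed in the definitions of $\eta_{12},\eta_{112},\eta_{1112},\eta_{11212}$ gives $\phi\psi^{-1}(\tilde E_{\alpha})=\eta_{\alpha}$ up to a nonzero scalar for each $\alpha\in\Phi^{+}$; this is precisely the source of the bracketing parameters in the statement. Because the factors $\tilde E_{\alpha}^{\ell-1}$ in $\tilde x$ are arranged in the PBW order of Theorem~\ref{2.4}, which is also the order of the factors in $\eta$, we obtain $\phi\psi^{-1}(\tilde x)=c\,\eta$ for some $c\in\mathbb{K}^{*}$, and hence $\phi\psi^{-1}(\tilde t\,\tilde x)=c\,\nu\eta=c\,\lambda$ and $\phi\psi^{-1}(\tilde x\,\tilde t)=c\,\eta\nu=c\,\lambda'$. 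Since a nonzero scalar multiple of a one-sided integral is again one, $\lambda$ is a left integral and $\lambda'$ a right integral of $\mathfrak{b}^{*}$, as claimed.

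The real work is the bracket-bookkeeping of the third paragraph: one has to track precisely how inverting and transposing $(r,s)$ alters the structure constants $\langle\omega_{i}',\omega_{j}\rangle$, hence the defining brackets of the $\tilde E_{\alpha}$, and confirm that they reproduce (up to scalars) the elements $\eta_{12},\dots,\eta_{11212}$ of the statement; an exponent or sign slip here would break everything, so it should be carried out explicitly. As a cross-check, or as a fully self-contained alternative, one can imitate the proof of Proposition~\ref{6.2} directly inside $\mathfrak{b}^{*}$, testing on the generators $\gamma_{k},\eta_{k}$: from $\gamma_{k}^{\ell}=1$ and commutativity of the $\gamma_{i}$ one gets $\gamma_{k}\lambda=\lambda=\varepsilon(\gamma_{k})\lambda$ at once, and the relation $\gamma_{j}\eta_{k}\gamma_{j}^{-1}=\langle\omega_{j}',\omega_{k}\rangle\eta_{k}$ from the proof of Lemma~\ref{5.1} reduces $\eta_{k}\lambda=0=\varepsilon(\eta_{k})\lambda$ to the vanishing $\eta_{k}\eta=0$, which follows by commuting $\eta_{k}$ across the PBW monomial $\eta$ --- via the $\eta$-images of the relations in Lemma~\ref{3.1} --- until it meets $\eta_{k}^{\ell-1}$ and produces $\eta_{k}^{\ell}=0$; the assertion for $\lambda'$ is symmetric. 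In either approach it is the $\ell$th-power vanishings $\eta_{\alpha}^{\ell}=0$ (inherited from $F_{\alpha}^{\ell}=0$ through $\phi$), together with the collapse of the $r_{i}s_{i}^{-1}$-binomial coefficients exploited throughout Section~3, that annihilate the cross-terms.
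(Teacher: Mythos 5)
Your proposal is correct and follows essentially the same route as the paper, which obtains the proposition in exactly this way — by transporting the integrals $tx$ and $xt$ of Proposition \ref{6.2} (applied with the swapped parameters) through the Hopf algebra isomorphism $\phi\psi^{-1}\colon\mathfrak b_{s^{-1},r^{-1}}\to\mathfrak b^{*}$ — and in fact records no more detail than your first three paragraphs. Your caveat about the bracket bookkeeping is well placed: carrying it out gives, e.g., $\phi(F_{12})=\eta_{2}\eta_{1}-r^{3}\eta_{1}\eta_{2}$, so the $q$-brackets in the statement should be read with their arguments in the order inherited from the $F_{\alpha}$'s (i.e.\ $[\eta_{2},\eta_{1}]_{r^{3}}$, etc., in the convention of Remark 2.3), a harmless normalization since the integral is only determined up to a nonzero scalar.
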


\begin{prop}\label{6.6}
$g:=\omega_{2\rho}^{-1}$ is the distinguished group-like element of
$\mathfrak{b}$, and under the Hopf pairing in $(6.2)$,
$(\omega'_i\mid g)=\langle
\omega_i',\omega_1\rangle^{-10}\langle\omega_i',\omega_2\rangle^{-6}=
\gamma_i(g)$ for $i=1,2$.
\end{prop}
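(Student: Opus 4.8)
The plan is to apply the characterization of the distinguished group-like element recalled just before Theorem~\ref{6.1}: it is the unique group-like $g\in\mathfrak b$ satisfying $\xi\lambda'=\xi(g)\lambda'$ for all $\xi\in\mathfrak b^{*}$, where $\lambda'=\eta\nu$ is the nonzero right integral of $\mathfrak b^{*}$ produced in the preceding Proposition. Since $g$ is group-like and $\mathfrak b$ is pointed with coradical $\mathbb K G(\mathfrak b)$ (Proposition~\ref{4.1}), we have $g\in G(\mathfrak b)=\langle\omega_{1},\omega_{2}\rangle\cong(\mathbb Z/\ell\mathbb Z)^{2}$. I would then observe that it suffices to compute the two scalars $\gamma_{1}(g),\gamma_{2}(g)$: writing $r=\theta^{y}$, $s=\theta^{z}$, the $2\times2$ matrix recording $\gamma_{j}(\omega_{i})$ in exponents of $\theta$ is the transpose of the matrix $A$ from the proof of Lemma~\ref{5.1}, whose determinant $3(y^{2}+z^{2}+yz)$ is invertible mod $\ell$, so $\gamma_{1},\gamma_{2}$ jointly separate the points of $G(\mathfrak b)$ and pin down $g$. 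The part of the defining condition with $\xi=\eta_{i}$ is then automatic, since $\eta_{i}=\sum_{h\in G(\mathfrak b)}(e_{i}h)^{*}$ vanishes on every group-like element.

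To compute $\gamma_{j}\lambda'=\gamma_{j}\,\eta\,\nu$, I would push $\gamma_{j}$ through $\eta$, writing $\gamma_{j}\,\eta\,\nu=(\gamma_{j}\eta\gamma_{j}^{-1})(\gamma_{j}\nu)$. One has $\gamma_{j}\nu=\nu$, because the $\gamma_{i}$ commute and $\gamma_{j}\bigl(1+\gamma_{j}+\cdots+\gamma_{j}^{\ell-1}\bigr)=1+\gamma_{j}+\cdots+\gamma_{j}^{\ell-1}$. For the conjugation of $\eta=\eta_{2}^{\ell-1}\eta_{12}^{\ell-1}\eta_{11212}^{\ell-1}\eta_{112}^{\ell-1}\eta_{1112}^{\ell-1}\eta_{1}^{\ell-1}$: Lemma~\ref{5.1} gives $\gamma_{j}\eta_{i}\gamma_{j}^{-1}=\langle\omega_{j}',\omega_{i}\rangle\eta_{i}$, and since each higher root vector $\eta_{\beta}$ (for $\beta=m_{1}\alpha_{1}+m_{2}\alpha_{2}\in\Phi^{+}$) is, by its definition in the preceding Proposition as an iterated $\langle\cdot,\cdot\rangle$-bracket of $\eta_{1},\eta_{2}$, scaled by conjugation, conjugation by $\gamma_{j}$ multiplies $\eta_{\beta}$ by $\langle\omega_{j}',\omega_{1}\rangle^{m_{1}}\langle\omega_{j}',\omega_{2}\rangle^{m_{2}}=\gamma_{j}\bigl(\omega_{1}^{m_{1}}\omega_{2}^{m_{2}}\bigr)$. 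Taking the product over the six positive roots, each raised to the power $\ell-1$, and using $\sum_{\beta\in\Phi^{+}}\beta=2\rho=10\alpha_{1}+6\alpha_{2}$ (Proposition~\ref{6.4}) together with $\gamma_{j}(\omega_{2\rho})^{\ell}=\gamma_{j}(\omega_{2\rho}^{\ell})=1$ (as $\omega_{i}^{\ell}=1$ in $\mathfrak u$, where $\omega_{2\rho}:=\omega_{1}^{10}\omega_{2}^{6}$), I obtain $\gamma_{j}\eta\gamma_{j}^{-1}=\gamma_{j}(\omega_{2\rho})^{\ell-1}\eta=\gamma_{j}(\omega_{2\rho}^{-1})\eta$. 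Hence $\gamma_{j}\lambda'=\gamma_{j}(\omega_{2\rho}^{-1})\lambda'$, and comparing with $\gamma_{j}\lambda'=\gamma_{j}(g)\lambda'$ (using $\lambda'\ne 0$) gives $\gamma_{j}(g)=\gamma_{j}(\omega_{2\rho}^{-1})$ for $j=1,2$, so $g=\omega_{2\rho}^{-1}$. This step is essentially the mirror, inside $\mathfrak b^{*}$, of the relation $y\omega_{k}=\gamma(\omega_{k})y$ established in Proposition~\ref{6.4}.

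For the final assertion I would simply evaluate the Hopf pairing $(6.2)$: as $\omega_{i}'$ is group-like, $(\omega_{i}'\mid\cdot)$ is an algebra homomorphism, so $(\omega_{i}'\mid g)=(\omega_{i}'\mid\omega_{2\rho}^{-1})=(\omega_{i}'\mid\omega_{1})^{-10}(\omega_{i}'\mid\omega_{2})^{-6}=\langle\omega_{i}',\omega_{1}\rangle^{-10}\langle\omega_{i}',\omega_{2}\rangle^{-6}$, which equals $\gamma_{i}(\omega_{2\rho}^{-1})=\gamma_{i}(g)$ because $\gamma_{i}$ and $(\omega_{i}'\mid\cdot)$ agree on $\omega_{1},\omega_{2}$. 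I expect the only delicate point to be the conjugation bookkeeping in the middle paragraph — making sure that conjugating by the character $\gamma_{j}$ scales each PBW root vector $\eta_{\beta}$ by exactly $\gamma_{j}(\omega_{1}^{m_{1}}\omega_{2}^{m_{2}})$, so that the exponents really sum to the weight $2\rho$; this is immediate from the bracket formulas for the $\eta_{\beta}$, but it is where an off-by-a-power slip could creep in, and one should also keep in mind that every $\langle\omega_{j}',\omega_{i}\rangle$ is an $\ell$th root of unity, so $(\ell-1)$st powers coincide with inverses.
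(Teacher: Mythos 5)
Your proposal is correct and follows essentially the same route as the paper: the paper transports $\lambda'=\eta\nu$ back to $(\mathfrak b')^{coop}$ via $\phi^{-1}$ and commutes $\omega_k'$ past $F=F_2^{\ell-1}\cdots F_1^{\ell-1}$, which is exactly the mirror image of your computation of $\gamma_j\eta\gamma_j^{-1}$ inside $\mathfrak b^*$; in both cases the scalar is $\gamma_k(\omega_{2\rho})^{\ell-1}=\gamma_k(\omega_{2\rho}^{-1})$ because the product of root vectors has weight $(\ell-1)\,2\rho$ and all pairing values are $\ell$th roots of unity. Your weight bookkeeping via the iterated brackets and the separation argument using the invertibility of $A$ are sound, so the argument goes through.
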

\begin{proof} Let
$F=F_{2}^{\ell-1}F_{12}^{\ell-1}F_{11212}^{\ell-1}F_{112}^{\ell-1}F_{1112}^{\ell-1}F_{1}^{\ell-1}$.
Then we have
$$
\omega'_k F=\langle
\omega_k',\omega_1\rangle^{-10}\langle\omega_k',\omega_2\rangle^{-6}F
\omega'_k.
$$
Since
$\phi^{-1}(\lambda')=F\Bigl(\prod_{i=1}^{2}(1+\omega'_i+\cdots+(\omega'_i)^{\ell-1})\Bigr),$
it follows that
$$\gamma_k \lambda'=\langle
\omega_k',\omega_1\rangle^{-10}\langle\omega_k',\omega_2\rangle^{-6}\lambda',$$
and $\eta_k \lambda'=0$. Taking $g:=\omega_{2\rho}^{-1}$, we have
$\xi\lambda'=\xi(g)\lambda'$ for all $\xi \in \mathfrak{b}^*$.
\end{proof}

\begin{theorem}
Assume that $r,s$ are $\ell$th roots of unity. Then
$D(\mathfrak{b})$ has a ribbon element.
\end{theorem}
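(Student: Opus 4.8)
The plan is to invoke the Kauffman--Radford criterion in the form of Theorem~\ref{6.1}(ii), with $A=\mathfrak{b}$. By Proposition~\ref{6.6} the distinguished group-like element of $\mathfrak{b}$ is $g=\om_{2\rho}^{-1}=\om_1^{-10}\om_2^{-6}$, and by Proposition~\ref{6.4} the distinguished group-like element of $\mathfrak{b}^*$ is the character $\gamma$ with $\gamma(e_k)=0$ and $\gamma(\om_k)=\langle\om_1',\om_k\rangle^{10}\langle\om_2',\om_k\rangle^{6}$; since a character is determined by its values on the algebra generators, $\gamma=\gamma_1^{10}\gamma_2^{6}$ in $\mathfrak{b}^*$ (equivalently $\gamma=\phi(\om_{2\rho}')$ under the isomorphism of Lemma~\ref{5.1}). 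Because $2\rho=10\alpha_1+6\alpha_2$ has even coefficients in the simple roots, the needed ``square-root'' group-like elements exist without any divisibility hypothesis on $\ell$: I would set
$$ h:=\om_\rho^{-1}=\om_1^{-5}\om_2^{-3}\in\mathfrak{b},\qquad \delta:=\gamma_1^{5}\gamma_2^{3}\in\mathfrak{b}^*. $$
Both are group-like (products of group-like elements), and $h^2=g$, $\delta^2=\gamma$ hold by inspection. It therefore remains only to verify the Radford-type identity $S^2(a)=h\,(\delta\rightharpoonup a\leftharpoonup\delta^{-1})\,h^{-1}$ for every $a\in\mathfrak{b}$.

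I would reduce this to a check on the algebra generators $\om_k^{\pm1}$ and $e_k$ ($k=1,2$): $S^2$ is an algebra endomorphism of $\mathfrak{b}$, and since $\delta$ is group-like so is $a\mapsto\delta\rightharpoonup a\leftharpoonup\delta^{-1}$, after which conjugation by the group-like $h$ is an automorphism, so the right-hand side is an algebra endomorphism as well. On $\om_k$ both sides equal $\om_k$, using $\delta(\om_k)\delta^{-1}(\om_k)=1$ and the commutativity of $G$. On $e_k$ the Hopf structure gives $S(e_k)=-\om_k^{-1}e_k$, hence $S^2(e_k)=\om_k^{-1}e_k\om_k=\langle\om_k',\om_k\rangle^{-1}e_k$; on the other side, using $\Delta^2(e_k)=e_k\ot1\ot1+\om_k\ot e_k\ot1+\om_k\ot\om_k\ot e_k$ and $\delta(e_k)=0$ one finds $\delta\rightharpoonup e_k\leftharpoonup\delta^{-1}=\delta(\om_k)^{-1}e_k$ with $\delta(\om_k)=\langle\om_1',\om_k\rangle^{5}\langle\om_2',\om_k\rangle^{3}$, while $h e_k h^{-1}=\langle\om_k',\om_1\rangle^{-5}\langle\om_k',\om_2\rangle^{-3}e_k$ by the relations $(G2)$. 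Thus the identity on $e_k$ reduces to the scalar equality
$$ \langle\om_k',\om_k\rangle=\langle\om_1',\om_k\rangle^{5}\langle\om_2',\om_k\rangle^{3}\,\langle\om_k',\om_1\rangle^{5}\langle\om_k',\om_2\rangle^{3},\qquad k=1,2, $$
which I would check from the explicit pairing values of Proposition~2.2: for $k=1$ the right side is $(rs^{-1})^{10}(s^3)^3(r^{-3})^3=rs^{-1}=\langle\om_1',\om_1\rangle$, and for $k=2$ it is $(r^{-3})^5(s^3)^5(r^3s^{-3})^6=r^3s^{-3}=\langle\om_2',\om_2\rangle$. Conceptually this is the identity $\langle2\rho,\alpha_k^\vee\rangle=\langle\alpha_k,\alpha_k^\vee\rangle$ transported through the skew-pairing, which is precisely what lets conjugation by $\om_\rho$ realize $S^2$. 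Having verified the identity on generators, Theorem~\ref{6.1}(ii) yields a ribbon element for $D(\mathfrak{b})$.

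The main point requiring care is not conceptual but computational bookkeeping: keeping the conventions for the antipode $S$, the coproduct $\Delta$, and the skew-pairing $\langle\om_i',\om_j\rangle$ mutually consistent throughout, and in particular never interchanging $\langle\om_i',\om_j\rangle$ with $\langle\om_j',\om_i\rangle$ when commuting the $\om_k$'s past the $e_k$'s. There is no structural obstacle: the existence of $h$ and $\delta$ with $h^2=g$, $\delta^2=\gamma$ is automatic here because every coefficient of $2\rho$ in the simple roots is even, so no extra coprimality assumption on $\ell$ is needed beyond those already in force, and the whole verification collapses to the finite check displayed above.
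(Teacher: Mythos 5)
Your proposal is correct and follows essentially the same route as the paper's proof: both invoke the Kauffman--Radford criterion (Theorem~\ref{6.1}(ii)) with $h=\omega_\rho^{-1}$ and $\delta=(\omega_\rho'\,|\,\cdot)=\gamma_1^5\gamma_2^3$, and verify $S^2(a)=h(\delta\rightharpoonup a\leftharpoonup\delta^{-1})h^{-1}$ on the generators $\omega_k$ and $e_k$ via the same pairing computation. Your explicit remark that the square roots $h$, $\delta$ exist unconditionally because $2\rho=10\alpha_1+6\alpha_2$ has even coefficients is left implicit in the paper but is accurate.
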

\begin{proof} By Proposition \ref{6.6}, $g=\omega_{2\rho}^{-1}$ is the distinguished
group-like element of $\mathfrak{b}$. There exists a group-like
element $h=\omega_{\rho}^{-1}\in \mathfrak{b}$ such that $h^2=g$.
Because $\gamma=(\omega_{2\rho}'\,|\,\cdot)$ corresponds to
$\omega'_{2\rho}$ under the isomorphism $\phi^{-1}:
\mathfrak{b}^*\longrightarrow (\mathfrak{b}')^{coop}$,  there exists
a $\delta=(\omega_\rho'\,|\,\cdot) \in \mathfrak{b}^*$ such that
$\delta^2=\gamma$, which is given by
$$ \delta(e_k)=0,\quad \textit{and} \quad
\delta(\omega_k)=\langle\omega_1',\omega_k\rangle^5\langle\omega_2',\omega_k\rangle^3,\qquad
k=1,2.$$ Then
\begin{gather*}
\begin{split}
h(\delta\rightharpoonup\omega_k\leftharpoonup\delta^{-1})h^{-1}&=\delta(\omega_k)
\delta^{-1}(\omega_k)h\omega_kh^{-1}=\omega_k=S^2(\omega_k),\\
h(\delta\rightharpoonup
e_k\leftharpoonup\delta^{-1})h^{-1}&=\delta(1)
\delta^{-1}(\omega_k)he_kh^{-1}\\
&=\langle\omega_1',\omega_k\rangle^{-5}\langle\omega_2',\omega_k\rangle^{-3}he_kh^{-1}\\
&=\langle\omega_1',\omega_k\rangle^{-5}\langle\omega_2',\omega_k\rangle^{-3}\langle\omega_k',\omega_1\rangle^{-5}\langle\omega_k',\omega_2\rangle^{-3}e_k\\
&=\omega_k^{-1}e_k\omega_k=S^2(e_k).
\end{split}
\end{gather*}

Then Theorem 6.1 implies the result.
\end{proof}

Under the hypothesis of Theorem \ref{5.2},
$\mathfrak{u}_{r,s}(G_2)\cong D(\mathfrak{b})$. Thus, we have
\begin{coro}
Assume that $r=\theta^y$, $s=\theta^z$, where $\theta$ is a
primitive $\ell$th root of unity and $(3(y^{2}+z^2+yz),\ell)=1$.
Then $\mathfrak{u}_{r,s}(G_2)$ has a ribbon element. \hfill\qed
\end{coro}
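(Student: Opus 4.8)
The plan is to read off the statement from the preceding Theorem together with Theorem~\ref{5.2}, so the real work is just matching hypotheses and transporting structure along an isomorphism. First I would check that the hypotheses line up: writing $r=\theta^y$, $s=\theta^z$ with $\theta$ a primitive $\ell$th root of unity forces $r^\ell=s^\ell=1$, so $r$ and $s$ are $\ell$th roots of unity, which is precisely the hypothesis under which the preceding Theorem produces a ribbon element of $D(\mathfrak{b})$; at the same time the numerical condition $(3(y^2+z^2+yz),\ell)=1$ is exactly the hypothesis of Theorem~\ref{5.2}, which gives a Hopf algebra isomorphism $\psi\colon D(\mathfrak{b})\longrightarrow\mathfrak{u}_{r,s}(G_2)$.

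Next I would make explicit what is being transported. The Drinfel'd double $D(\mathfrak{b})$ carries its canonical quasitriangular structure $R$ (recalled in this section), and the preceding Theorem supplies a ribbon element $v$, built from $g=\omega_{2\rho}^{-1}$, $h=\omega_\rho^{-1}$ and $\delta=(\omega_\rho'\mid\cdot)$ via the Kauffman--Radford criterion of Theorem~\ref{6.1}; thus $(D(\mathfrak{b}),R,v)$ is a ribbon Hopf algebra. I would then set $\widetilde R=(\psi\otimes\psi)(R)$ and $\widetilde v=\psi(v)$ and claim that $(\mathfrak{u}_{r,s}(G_2),\widetilde R,\widetilde v)$ is again a ribbon Hopf algebra.

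Finally I would check this last claim: since $\psi$ is a bijective Hopf algebra map it commutes with multiplication, $\Delta$, $\varepsilon$ and $S$, so applying $\psi$, $\psi\otimes\psi$ or $\psi^{\otimes 3}$ to the identities defining quasitriangularity of $R$, the Casimir relation, $S(v)=v$, $\varepsilon(v)=1$, $\Delta(v)=(R_{2,1}R_{1,2})^{-1}(v\otimes v)$ and centrality of $v$ turns each of them into the corresponding identity for $\widetilde R$ and $\widetilde v$. Because this whole step is purely formal, there is no genuine obstacle; the only point requiring a moment's care is the routine bookkeeping that a Hopf-isomorphic image of a ribbon Hopf algebra is again ribbon, together with the (essentially trivial) observation that the hypotheses of the preceding Theorem really are implied by those stated here.
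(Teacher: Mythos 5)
Your proposal is correct and follows the same route as the paper: the paper derives the corollary in one line by combining the preceding theorem (existence of a ribbon element in $D(\mathfrak{b})$) with the isomorphism $\mathfrak{u}_{r,s}(G_2)\cong D(\mathfrak{b})$ of Theorem~\ref{5.2}, exactly as you do. You merely spell out the (implicit in the paper) formal transport of the quasitriangular and ribbon structure along the Hopf algebra isomorphism, which is fine.
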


\bigskip
\bibliographystyle{amsalpha}

\end{document}